\newtheorem{theorem}{Theorem}
\newtheorem{lemma}[theorem]{Lemma}
\newtheorem{conjecture}[theorem]{Conjecture}
\newtheorem{proposition}[theorem]{Proposition}
\theoremstyle{definition}
\theoremstyle{remark}
\numberwithin{equation}{section}
\newcommand{\B}{\mathcal{B}}
\newcommand{\D}{\mathbb{D}}
\newcommand{\DD}{\widehat{\mathcal{D}}}
\newcommand{\Dd}{\widecheck{\mathcal{D}}}
\newcommand{\DDD}{\mathcal{D}}
\newcommand{\N}{\mathbb{N}}
\newcommand{\RR}{\mathcal{R}}
\newcommand{\C}{\mathbb{C}}
\newcommand{\e}{\varepsilon}
\renewcommand{\phi}{\varphi}
\newcommand{\whw}{\widehat{\omega}}
\def\a{\alpha}               
           \def\e{\varepsilon}
     \def\om{\omega}      
                  \def\z{\zeta}
\renewcommand{\H}{\mathcal{H}}
\begin{document}
\title[Two weight inequality for Hankel form on weighted Bergman spaces]{Two weight inequality for Hankel form on weighted Bergman spaces induced by doubling weights}


\thanks{The research is supported in part by NNSF of China (Grant Number: 12171075), Science and technology development plan of Jilin Province (Grant Number: 20210509039RQ) and the University of Eastern Finland for financial support.}

\begin{abstract}
The boundedness of the small Hankel operator $h_f^\nu(g)=P_\nu(f\overline{g})$, induced by an analytic symbol $f$ and the Bergman projection $P_\nu$ associated to $\nu$, acting from the weighted Bergman space $A^p_\om$ to $A^q_\nu$ is characterized on the full range $0<p,q<\infty$ when $\omega,\nu$ belong to the class $\mathcal{D}$ of radial weights admitting certain two-sided doubling conditions. Certain results obtained are equivalent to the boundedness of bilinear Hankel forms, which are in turn used to establish the weak factorization $A_{\eta}^{q}=A_{\omega}^{p_{1}}\odot A_{\nu}^{p_{2}}$, where $1<q,p_{1},p_{2}<\infty$ such that $q^{-1}=p_{1}^{-1}+p_{2}^{-1}$ and $\widetilde{\eta}^{\frac{1}{q}}\asymp\widetilde{\omega}^{\frac{1}{p_{1}}}\widetilde{\nu}^{\frac{1}{p_{2}}}$. Here $\widetilde{\tau}(r)=\int_r^1\tau(t)\,dt/(1-t)$ for all $0\le r<1$.
\end{abstract}

\keywords{Bergman projection, Bergman space, bilinear Hankel form, Bloch space, doubling weight, fractional derivative, small Hankel operator, weak factorization.}

\subjclass[2010]{Primary 30H20, 47B35}

\author[Yongjiang Duan]{Yongjiang Duan}
\address{School of Mathematics and Statistics\\
Northeast Normal University\\ Changchun\\ Jilin 130024\\ P.R.China}
\email{duanyj086@nenu.edu.cn}

\author[Jouni R\"atty\"a]{Jouni R\"atty\"a}
\address{University of Eastern Finland\\
Department of Physics and Mathematics\\
P.O.Box 111\\FI-80101 Joensuu\\
Finland}
\email{jouni.rattya@uef.fi}

\author[Siyu Wang]{Siyu Wang}
\address{School of Mathematics and Statistics\\
Northeast Normal University\\ Changchun\\ Jilin 130024\\ P.R.China}
\email{wangsy696@nenu.edu.cn}

\author{Fanglei Wu}
\address{University of Eastern Finland, P.O.Box 111, 80101 Joensuu, Finland}
\email{fangleiwu1992@gmail.com}

\maketitle

\section{Introduction and main results}

Each radial weight $\nu$ and analytic function $f$ on the unit disc $\D$ induce the small Hankel operator $h_f^\nu$ defined by
	$$
	h_f^\nu(g)(z)=P_\nu(f\overline{g})(z)=\int_\D f(\z)\overline{g(\z)B_{z}^\nu(\z)}\nu(\zeta)\,dA(\z), \quad z\in\D,
	$$
where $P_\nu$ is the orthogonal Bergman projection from the Lebesgue space $L^2_\nu$ to the Bergman space $A^2_\nu$, and $B^\nu_z$ is the reproducing kernel of $A^2_\nu$. The main goal of this study is to characterize the boundedness of $h^\nu_f:A^p_\om\rightarrow A^q_\nu$ on the range $0<p,q<\infty$ when $\omega,\nu$ belong to the class $\mathcal{D}$ of radial weights admitting certain two-sided doubling properties, and the analytic symbol $f$ satisfies
	\begin{equation}\label{fubinicondition}
  \|f\|_{A^1_{\nu_{\log}}}=\int_\D|f(z)|\left(\log\frac{e}{1-|z|}\right)\nu(z)\,dA(z)<\infty.
	\end{equation}
The novelty of our findings stems from three facts. First, we consider the full range $0<p,q<\infty$ in the two-weight setting, and thus, in particular, overcome the essential difficulties related to the case when $q<1$. The proofs require a new innovation involving a boosting of the order of the derivative in the definition of the Hankel operator. Second, we do not impose any initial hypotheses on the interrelationship between the weights $\om$ and $\nu$; they are just any prefixed weights in the class $\DDD$. This also completes earlier results in the literature and is achieved in part with the aid of the boosting. Third, the class $\DDD$ we consider is known to be vast, and in a sense the most large class of radial weights that contains the standard weights as prototypes. Therefore our study extends known results to this setting. The generalizations we obtain come by no means for free and actually require a number of technically demanding steps. As an application of our discoveries, we establish a natural weak factorization for the Bergman space $A^p_\om$ with $1<p<\infty$ and $\om\in\DDD$. This relies on the well-known connection between a description of certain bounded bilinear Hankel forms and the said factorizations.

It is well-known that the big Hankel operator and small Hankel operator are essentially the same on the Hardy spaces in the sense of unitary transformation~\cite{Nikolski} while they are quite different on the Bergman spaces~\cite{zhu}. Nonetheless, while the literature concerning the big Hankel operator is vast, only few studies concern the boundedness of the small Hankel operator on Bergman spaces even for an analytic symbol. As far as we know, Bonami and Luo~\cite{B-L} considered the boundedness of the small Hankel operator $h_{f}$ from an unweighted Bergman space $A^p$ to $A^q$ for $0<p,q<\infty$, Aleman and Constantin~\cite{A-C-2004} studied the boundedness of $h_{f}$ on vector-valued $A^2_\alpha$, and Pau and Zhao \cite{Pau-Zhao} established the weak factorization for $A^p_\alpha$ in terms of characterizing the bounded operator $h_{f}^{\alpha}:A^p_\beta\rightarrow A^q_\gamma$ on the unit ball for $1\le q<p<\infty$ and some ranges of $-1<\alpha,\beta,\gamma<\infty$. Moreover, recently small Hankel operator $h_{f}^{\alpha}$ with operator valued symbol $f$ on standard weighted vector-valued Bergman spaces were characterized by B\'{e}koll\'{e}, Defo, Tchoundja, and Wick \cite{B-O-T-W} and Oliver \cite{O}, and a description for the boundedness of $h_{f}^{\omega}:A^p_\om\to A^q_\om$ when $1<q<p<\infty$ and $\omega\in\widehat{\mathcal{D}}$ was established by Korhonen and R\"{a}tty\"{a} \cite{KR}.

The problem of characterizing bounded operators $h^\nu_f:A^p_\om\rightarrow A^q_\nu$ is analogous to the well-known problem of characterizing the boundedness of singular integral operators acting between different Lebesgue spaces. These kind of two-weight problems have attracted a considerable amount of attention in complex and harmonic analysis, and are closely connected to other intriguing questions on the area \cite{Hy2012,Hy-1,Hy-2,LSCU2014}. Such a problem appearing in operator theory can be attributed to the series of studies of boundedness of linear operators from a weighted Lebesgue space to another. One of the most attractive problems over the past several decades is to characterize when the Bergman projection $P_{\omega}:L^{p}_{\nu}\to A_{\nu}^{p}$ is bounded. The most commonly known result on the Bergman projection is due to Bekoll\'e and Bonami \cite{B,BB1978}, and concerns the case when $\nu$ is an arbitrary weight and $\om$ is standard; when both $\omega,\nu$ are certain radial weights, see \cite{Do,PR2014,PelRat2020} for recent results and the reference therein. Most of the known results were obtained for $p\in(1,\infty)$, while in the endpoint case $p=1$, the weak (1,1) property of Bergman projection were demonstrated in \cite{B,D-H-Z-Z}. Similar studies for other classical operators intimately related to the Bergman projection on weighted Bergman spaces can be found in \cite{DGWW,Nikolski,Pau-Zhao-,PRS1} for Toeplitz operators, \cite{Hu,Pau-Zhao-Zhu,PPR2020,Peller} for Hankel operators and \cite{Zhao-Zhou} for Forelli-Rudin type operators.

We now proceed towards the exact statements via necessary definitions. Let $\H(\D)$ denote the space of analytic functions on the unit disc $\D=\{z\in\C:|z|<1\}$. A non-negative function $\om\in L^1(\D)$ such that $\om(z)=\om(|z|)$ for all $z\in\D$ is called a radial weight. For $0<p<\infty$ and such an $\omega$, the Lebesgue space $L^p_\om$ consists of measurable complex-valued functions $f$ on $\D$ such that
    $$
    \|f\|_{L^p_\omega}^p=\int_\D|f(z)|^p\omega(z)\,dA(z)<\infty,
    $$
where $dA(z)=\frac{dx\,dy}{\pi}$ is the normalized area measure on $\D$. The corresponding weighted Bergman space is $A^p_\om=L^p_\omega\cap\H(\D)$. As usual, we write $A^p_\alpha$ for the classical weighted Bergman spaces induced by the standard radial weight $\om(z)=(\a+1)(1-|z|^2)^\alpha$ with $-1<\alpha<\infty$. Throughout this paper we assume $\widehat{\om}(z)=\int_{|z|}^1\om(s)\,ds>0$ for all $z\in\D$, for otherwise $A^p_\om=\H(\D)$.

For a radial weight $\nu$, the norm convergence in $A^2_\nu$ implies the uniform convergence on compact subsets, and therefore the Hilbert space $A^2_\nu$ is a closed subspace of $L^2_\nu$ and the orthogonal Bergman projection $P_\nu:L^2_\nu\to A^2_\nu$ is given by
	$$	
	P_\nu(g)(z)=\int_\D g(z)\overline{B^\nu_z(\z)}\nu(\z)\,dA(\z),\quad z\in\D,
	$$
where $B^\nu_z$ is the reproducing kernel of $A^2_\nu$, associated to the point $z\in\D$. As usual, we write $B^\alpha_z$ for the kernel of $A^2_\alpha$, and $P_\alpha$ stands for the corresponding Bergman projection.

A radial weight $\om$ belongs to the class~$\DD$ if there exists a constant $C=C(\om)\ge1$ such that $\widehat{\om}(r)\le C\widehat{\om}(\frac{1+r}{2})$ for all $0\le r<1$. Moreover, if there exist $K=K(\om)>1$ and $C=C(\om)>1$ such that $\widehat{\om}(r)\ge C\widehat{\om}\left(1-\frac{1-r}{K}\right)$ for all $0\le r<1$, then we write $\om\in\Dd$. In other words, $\om\in\Dd$ if there exist $K=K(\om)>1$ and $C'=C'(\om)>0$ such that
	\begin{equation}\label{Dcheck}
	   \widehat{\om}(r)\le C'\int_r^{1-\frac{1-r}{K}}\om(t)\,dt,\quad 0\le r<1.
	\end{equation}
The intersection $\DD\cap\Dd$ is denoted by $\DDD$. The class $\RR\subset\DDD$ of regular weights consists of those radial weights for which $\widehat{\om}(r)\asymp\omega(r)(1-r)$ for all $0\le r<1$. It is known that $\DD$ and $\DDD$ and arise naturally in many instances in the operator theory of Bergman spaces induced by radial weights~\cite{PelRat2020}. Apart from this we refer to \cite{PelSum14,PR} and references therein for basic properties and more on different classes of radial weights.

Throughout the paper $\frac1p+\frac{1}{p'}=1$ for $1<p<\infty$. Further, the letter $C=C(\cdot)$ will denote an absolute constant whose value depends on the parameters indicated in the parenthesis, and which may change from one occurrence to another. If there exists a constant
$C=C(\cdot)>0$ such that $a\le Cb$, then it is written either $a\lesssim b$ or $b\gtrsim a$. In particular, if $a\lesssim b$ and
$a\gtrsim b$, then it is denoted by $a\asymp b$ and said that $a$ and $b$ are comparable.

The main findings of this study are stated in this section while the proofs are given in the forthcoming ones in the numerical order. Our first result generalizes the two-weight case of \cite[Theorem~3]{Pau-Zhao} in the unit disc setting in two ways. Namely, we work without any initial hypotheses on the interrelationship between the weights involved, and thus cover also the cases excluded in the said theorem, and we consider doubling weights instead of the standard radial weights.

\begin{theorem}\label{theorem}
Let $1<q<p<\infty$ and $\om,\nu\in\DDD$, and let $f\in A^1_{\nu_{\log}}$. Then there exists an $N=N(\om,\nu,p,q)\in\N$ such that $h^\nu_f: A^p_{\om}\rightarrow A^q_{\nu}$ is bounded if and only if
	\begin{equation}\label{Eq:theorem}
	\int_\D\left|f^{(n)}(z)\right|^{\frac{pq}{p-q}}(1-|z|)^{n\frac{pq}{p-q}}\left(\frac{\widehat{\nu}(z)}{\whw(z)}\right)^{\frac{p}{p-q}}\frac{\widehat{\om}(z)}{1-|z|}\,dA(z)<\infty
	\end{equation}
for some (equivalently for all) $n\ge N$. Moreover,
	$$
	\|h^\nu_f\|^{\frac{pq}{p-q}}_{A^p_\om\rightarrow A^{q}_{\nu}}
	\asymp
	\int_\D\left|f^{(n)}(z)\right|^{\frac{pq}{p-q}}(1-|z|)^{n\frac{pq}{p-q}}\left(\frac{\widehat{\nu}(z)}{\whw(z)}\right)^{\frac{p}{p-q}}\frac{\widehat{\om}(z)}{1-|z|}\,dA(z)+\sum_{j=0}^{n-1}|f^{(j)}(0)|^{\frac{pq}{p-q}}
	$$
for each fixed $n\ge N$.
\end{theorem}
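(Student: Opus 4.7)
The plan is to pass from the operator to its associated bilinear form, boost the order of the derivative on the symbol via a reproducing-type identity, and then conclude by H\"older's inequality together with Carleson embeddings available for $\DDD$-weights. Since $1<q<\infty$ and $\nu\in\DDD$, the $L^2_\nu$-pairing identifies $(A^q_\nu)^*\simeq A^{q'}_\nu$, so
$$
\|h^\nu_f\|_{A^p_\omega\to A^q_\nu}\asymp \sup_{\|g\|_{A^p_\omega}\le1,\,\|h\|_{A^{q'}_\nu}\le1}|T_f(g,h)|,\qquad T_f(g,h)=\int_\D f\,\overline{gh}\,\nu\,dA.
$$
This replaces a two-weight operator problem by a bilinear testing problem in which the symbol $f$ and the analytic product $gh$ are decoupled.

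The boosting identity I would establish says that for some $N=N(\omega,\nu,p,q)\in\N$, every $n\ge N$, and suitable analytic $F=gh$,
$$
\int_\D f\,\overline F\,\nu\,dA=\sum_{j=0}^{n-1}\frac{f^{(j)}(0)}{j!}\,m_{j,n}(F)+c_n\int_\D f^{(n)}(z)\,\overline{F(z)}\,(1-|z|)^n\,\nu^\star(z)\,dA(z),
$$
where $\nu^\star$ is an auxiliary weight built from $\nu$ using its two-sided doubling and comparable after averaging to $\widehat\nu/(1-|z|)$, while $m_{j,n}(F)$ produces the polynomial correction responsible for the $\sum_{j<n}|f^{(j)}(0)|^{pq/(p-q)}$ term in the norm equivalence. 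The identity is transparent when $\nu\in\RR$ through $\widehat\nu\asymp\nu(z)(1-|z|)$ and classical integration by parts; in the $\DDD$-setting it is obtained via Taylor expansion together with the fractional-integration toolbox developed for doubling radial weights. The role of $n$ is to gain a factor $(1-|z|)^n$ that renders all exponents positive in the subsequent H\"older step, which is the essential reason a low value like $n=1$ cannot be used in general and why boosting is needed.

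For sufficiency I would apply H\"older's inequality with exponents $\tfrac{pq}{p-q}$, $p$ and $q'$ to the boosted form of $T_f(g,h)$. The $g$- and $h$-factors are handled by Carleson-type embeddings of $A^p_\omega$ and $A^{q'}_\nu$ into weighted $L$-spaces built out of $\widehat\omega$ and $\widehat\nu$, available within the $\DDD$-toolkit, yielding $\|g\|_{A^p_\omega}\|h\|_{A^{q'}_\nu}$ and leaving exactly the integral in \eqref{Eq:theorem} to match the operator norm. For necessity I would test $T_f$ against pairs $(g_\lambda,h_\lambda)$ of weighted normalized reproducing kernels indexed by a separated net in $\D$, chosen so that $g_\lambda(\lambda)h_\lambda(\lambda)$ is comparable to the sharp kernel estimates known for $B^\nu_\lambda$ with $\nu\in\DDD$. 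After randomizing the net with Rademacher variables and applying Khinchine's inequality, the pointwise information this yields for $f^{(n)}$ assembles into the integral condition in \eqref{Eq:theorem} with matching constants.

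The main obstacle will be producing the boosting identity in sufficient generality to accommodate any pair $\omega,\nu\in\DDD$ without an a priori comparison between them. The condition \eqref{Eq:theorem} couples $\widehat\omega$ and $\widehat\nu$ asymmetrically through the ratio $\widehat\nu/\widehat\omega$ raised to the power $\tfrac{p}{p-q}$, so the H\"older factorization and the choice of test kernels must be orchestrated so that this factor emerges naturally from the interplay of the $A^p_\omega$- and $A^{q'}_\nu$-geometries with the sharp kernel estimates for $\DDD$. A secondary difficulty is verifying that the condition holding for some $n\ge N$ is equivalent to its holding for every $n\ge N$; this calls for an inductive comparison between consecutive values of $n$ combined with the two-sided doubling of $\widehat\omega$ and $\widehat\nu$.
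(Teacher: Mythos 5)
Your overall architecture --- boost the order of the derivative, run H\"older for sufficiency, and test with randomized kernel-type functions plus Khinchine for necessity --- matches the paper's, and your sufficiency route is a legitimate variant: instead of estimating $\|h^\nu_f(g)\|_{A^q_\nu}$ directly (as the paper does, by splitting $|B^\nu_z|=|B^\nu_z|^{1/q}|B^\nu_z|^{1/q'}$ inside the defining integral and invoking two kernel norm estimates), you dualize to the bilinear form and apply a three-way H\"older with exponents $\tfrac{pq}{p-q}$, $p$, $q'$. The exponent arithmetic does produce exactly the weight in \eqref{Eq:theorem}, so this closes once the boosting identity is available. Note, however, that the identity you should aim for is cleaner than the one you state: defining $\mu^n_\nu$ by prescribing its odd moments $(\mu^n_\nu)_{2k+1}=\nu_{2k+1}/((k+1)\cdots(k+n))$ gives $\int_\D f(\z)\overline{B^{\mu^n_\nu}_z(\z)}\nu(\z)\,dA(\z)=\tfrac{d^n}{dz^n}(z^nf(z))$ exactly, with no correction terms; the quantities $\sum_{j<n}|f^{(j)}(0)|$ enter only afterwards, when one trades $\tfrac{d^n}{dz^n}(z^nf(z))$ for $f^{(n)}$ by a nontangential maximal function comparison. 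Insisting on landing directly on $f^{(n)}$, as your identity does, makes the moment computation and the correction functionals $m_{j,n}(F)$ needlessly delicate.

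The genuine gap is in the necessity. In the upper-triangular range $q<p$, testing the bilinear form against individual (even randomized) pairs of normalized kernels can only yield pointwise bounds of the form $|f^{(n)}(z)|\lesssim\|h^\nu_f\|\cdot(\cdots)$, i.e.\ a sup-type condition; it cannot by itself produce the integral condition \eqref{Eq:theorem}. What your phrase ``assembles into the integral condition with matching constants'' conceals is the following chain: test with $F=\sum_{j,l,k}\lambda^k_{j,l}F_{j,l,k}$ built from the atomic decomposition of $A^p_\om$ with \emph{free} coefficients $\{\lambda^k_{j,l}\}\in\ell^p$, use Khinchine plus subharmonicity to localize to pseudohyperbolic discs, apply the duality $(\ell^{p/q})^\star\simeq\ell^{p/(p-q)}$ in the free coefficients to upgrade the family of inequalities to a discrete $\ell^s$-bound with $s=\tfrac{pq}{p-q}$, and finally reassemble that discrete sum into $\bigl\|\tfrac{d^n}{dz^n}(z^nf(z))\bigr\|_{A^s_W}$ via the duality $(A^{s'}_U)^\star\simeq A^s_W$ combined with the atomic decomposition of $A^{s'}_U$. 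Without the $\ell^{p/q}$-duality step and this reassembly the necessity argument does not close; this is exactly where the case $q<p$ differs from $p\le q$, where a sup-condition genuinely is the answer (compare Theorem~\ref{theorem1}).
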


We make several observations on the upper triangular case $1<q<p<\infty$ considered in Theorem~\ref{theorem}. First of all, the appearance of the higher order derivative, or alternatively some fractional derivative, in the statement is inevitable if no initial hypotheses on the interrelationship between $\omega$ and $\nu$ is imposed. Namely, if $\om,\nu\in\DDD$ then the function $\eta=\left(\frac{\widehat{\nu}}{\whw}\right)^{\frac{p}{p-q}}\frac{\widehat{\om}}{1-|\cdot|}$ is not necessarily integrable over $\D$, yet $h^\nu_f: A^p_{\om}\rightarrow A^q_{\nu}$ may certainly be bounded without forcing $f^{(n)}$ to vanish identically. Further, since $\om,\nu\in\DDD$ we may choose $n=n(\om,\nu,p,q)\in\N$ such that the weight appearing in \eqref{Eq:theorem} becomes integrable and the condition makes perfect sense. If $\eta$ happens to be a weight in $\DDD$, then we may certainly choose $n=0$ because the Littlewood-Paley estimate \cite[Theorem~5]{PelRat2020} shows that the finiteness of the quantity \eqref{Eq:theorem} is independent of $n$.

Even if the proof of \cite[Theorem~3]{Pau-Zhao} works as a guideline for us to obtain Theorem~\ref{theorem}, there are a number of things that we have to do differently. The boosting of the order of the derivative is one of the crucial points in the proof of Theorem~\ref{theorem}. In fact, it is involved in the proofs of all main results of this study. In Section~\ref{Sec:boosting} we give the details of this argument and show that, for each $n\in\N$ and for each radial weight~$\nu$, there exists a radial weight~$\mu^n_\nu$ such that
	$$
	\int_\D f(\zeta)\overline{B_z^{\mu^n_\nu}(\zeta)}\nu(\zeta)\,dA(\zeta)=\frac{d^n}{dz^n}\left(z^nf(z)\right),\quad f\in A^1_\nu.
	$$
This integral formula yields the novel representation
	$$
	h_f^\nu(g)(z)
	=\int_{\D}\overline{g(\z)B_z^{\nu}(\z)}\left(\frac{d^n}{d\z^n}\left(\z^n f(\z)\right)\right)\mu^n_{\nu}(\z)\,dA(\z),\quad g\in H^\infty,
	$$
which is the creature that we will mainly work with. Since we do not require any extra hypotheses in Theorem~\ref{theorem}, one immediately sees the advantage of this representation in terms of the boosting through comparing the special case $\alpha=\alpha_2$ of Pau and Zhao~\cite[Theorem 3]{Pau-Zhao} to our theorem. It tells us that, at least in the unit disc case when $\alpha=\alpha_2$, the extra restriction appearing in the statement of \cite[Theorem 3]{Pau-Zhao} can be dropped.

Apart from the boosting and several technical results on doubling weights, the proof of the sufficiency in Theorem~\ref{theorem} relies on norm estimates of the Bergman reproducing kernel. We will use the estimate
	\begin{equation}\label{kernel}
	\left\|\left(B^\nu_z\right)^{(k)}\right\|_{A^p_\om}^p\lesssim\int_0^{|z|}\frac{\widehat{\om}(t)}{\widehat{\nu}(t)^p(1-t)^{p(k+1)}}\,dt+1,\quad z\in\D,
	\end{equation}
valid for $0<p<\infty$, $\nu\in\DD$, $k\in\N\cup\{0\}$ and all radial weights $\om$, repeatedly in our arguments. The proof of \eqref{kernel} and more can be found in \cite{PR2014}.

The proof of the necessity in Theorem~\ref{theorem} is technically demanding and relies on the atomic decomposition of functions in $A^p_\om$~\cite[Theorems~1 and 2]{PRS-2021} and a modification of a well-known argument involving the Rademacher functions and Khinchine's inequality. It also requires a good understanding of the boundedness of the Bergman projection
	$$
	P_\nu(f)(z)=\int_\D f(\zeta)\overline{B^\nu_z(\zeta)}\nu(\z)\,dA(\zeta)
	$$
acting on $L^p_\om$, and duality relations of Bergman spaces with respect to different pairings, two important concepts that are certainly interrelated. The references \cite{PR2014} and \cite{PelRat2020} serve us with regard to these matters.

Pau and Zhao~\cite{Pau-Zhao} successfully used the small Hankel operators induced by analytic symbols, or more precisely bilinear Hankel forms, to obtain a weak factorization of $A^p_\alpha$ in the case $1<p<\infty$ on the unit ball of $\C^n$, and thus efficiently completed the existing literature. Following them, the work done for Theorem~\ref{theorem} allows us to do the same in our setting. To give the precise statement, some more notation is needed. Given two Banach spaces $A,B$ of functions, which are defined on the same domain, the weakly factored space $A\odot B$ is defined as the completion of finite sums
	\[
	f=\sum_{k}\varphi_{k}\psi_{k},\quad\{\varphi_{k}\}\subset A,\quad~\{\psi_{k}\}\subset B,
	\]
with the norm
\[
\|f\|_{A\odot B}=\inf_{\{\varphi_{k},\psi_{k}:
f=\sum_{k}\varphi_{k}\psi_{k},\varphi_{k}\in A,\psi_{k}\in B\}}\sum_{k}\|\varphi_{k}\|_{A}\|\psi_{k}\|_{B}
<\infty.
\]

By adopting the notation $\widetilde{\tau}(r)=\widehat{\tau}(r)/(1-r)$ for each radial weight $\tau$ and all $0\le r<1$, the weak factorization that we obtain for weighted Bergman spaces generalizes \cite[Theorem~1]{Pau-Zhao} in the unit disc setting to doubling weights and reads as follows.

\begin{theorem}\label{weak factorization-t1}
Let $1<p_{1},p_{2},q<\infty$ and $\omega,\nu,\eta\in\mathcal{D}$ such that $q^{-1}=p_{1}^{-1}+p_{2}^{-1}$
and
	\begin{equation}\label{weak factorization-eq3}
	\widetilde{\eta}^{\frac{1}{q}}\asymp\widetilde{\omega}^{\frac{1}{p_{1}}}\widetilde{\nu}^{\frac{1}{p_{2}}}.
	\end{equation}
Then
	\[
	A_{\eta}^{q}=A_{\omega}^{p_{1}}\odot A_{\nu}^{p_{2}},
	\]
that is, $A_{\eta}^{q}$ and $A_{\omega}^{p_{1}}\odot A_{\nu}^{p_{2}}$ are the same as function spaces and their norms are equivalent.
\end{theorem}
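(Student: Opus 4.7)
The plan is to prove the two set-theoretic inclusions separately. The inclusion $A^{p_1}_{\omega}\odot A^{p_2}_{\nu}\subseteq A^{q}_{\eta}$ follows from a direct argument combining H\"older's inequality with a Littlewood--Paley description of the norms, while the reverse inclusion $A^{q}_{\eta}\subseteq A^{p_1}_{\omega}\odot A^{p_2}_{\nu}$ requires a duality argument that leverages the characterization of bounded small Hankel operators provided by Theorem~\ref{theorem}.

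For the first inclusion, by the definition of $\|\cdot\|_{A^{p_1}_\omega\odot A^{p_2}_\nu}$ and subadditivity, it suffices to prove the single-product estimate $\|\varphi\psi\|_{A^{q}_{\eta}}\lesssim \|\varphi\|_{A^{p_1}_{\omega}}\|\psi\|_{A^{p_2}_{\nu}}$. I would rewrite the left-hand side via the Littlewood--Paley formula for $A^{q}_{\eta}$ with $\eta\in\DDD$, apply the Leibniz rule to $(\varphi\psi)^{(n)}$ for sufficiently large $n$, control each $\varphi^{(j)}(z)$ and $\psi^{(n-j)}(z)$ by a local sup-norm on a pseudo-hyperbolic disc around $z$ via Cauchy estimates, factor the weight as $\widetilde{\eta}\asymp\widetilde{\omega}^{q/p_1}\widetilde{\nu}^{q/p_2}$ using (\ref{weak factorization-eq3}), and apply H\"older with exponents $p_1/q$ and $p_2/q$. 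The two resulting integrals are then recognized as Littlewood--Paley norms of $\varphi$ in $A^{p_1}_{\omega}$ and of $\psi$ in $A^{p_2}_{\nu}$, completing this direction.

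For the reverse inclusion I would argue by duality. Since the first inclusion is continuous, it suffices to show that every bounded linear functional $\Lambda$ on $A^{p_1}_{\omega}\odot A^{p_2}_{\nu}$ extends to a bounded linear functional on $A^{q}_{\eta}$ under a common pairing, from which the claim follows by Hahn--Banach. A bounded $\Lambda$ is exactly a bounded bilinear form on $A^{p_1}_{\omega}\times A^{p_2}_{\nu}$, and relative to the Bergman-type pairing $\langle g,h\rangle_{\rho}=\int_\D g\overline{h}\rho\,dA$ with a suitable reference weight $\rho$ (obtained from $\nu$ via the boosting construction of Section~\ref{Sec:boosting}), such a form can be represented as $\Lambda(\varphi,\psi)=\langle\varphi\psi,f\rangle_{\rho}=\langle\varphi,h_{f}^{\rho}(\psi)\rangle_{\rho}$ for an analytic symbol $f$. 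Boundedness of $\Lambda$ is then equivalent to boundedness of the small Hankel operator $h_{f}^{\rho}$ between the corresponding Bergman spaces; since $q>1$ forces $p_1',p_2'<\max(p_1,p_2)$, we are squarely in the upper-triangular range of Theorem~\ref{theorem}, which yields a concrete integrability condition on $f^{(n)}$ against a power of $\widehat{\nu}/\widehat{\omega}$ times $\widehat{\omega}$. A direct manipulation using (\ref{weak factorization-eq3}) recasts this condition as membership of $f$ in the standard Bergman dual of $A^{q}_{\eta}$ (again described via fractional derivatives through the boosting). The two dual spaces therefore coincide as Banach spaces with equivalent norms, and so do $A^{q}_{\eta}$ and $A^{p_1}_{\omega}\odot A^{p_2}_{\nu}$.

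The main obstacle I anticipate is the careful alignment of pairings, of the boosting order, and of the weights in the duality step: the symbol class produced by Theorem~\ref{theorem} is expressed in terms of $\widehat{\omega}$, $\widehat{\nu}$ and $n$, while the standard duality for $A^{q}_{\eta}$ is expressed in terms of $\widehat{\eta}$ and its own derivative order, and one must verify that the hypothesis (\ref{weak factorization-eq3}) is precisely the compatibility condition that makes the two descriptions refer to the same space of analytic symbols with equivalent norms. Technical doubling-weight estimates together with the boosting device developed earlier in the paper will be crucial in this verification.
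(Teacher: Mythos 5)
Your overall strategy matches the paper's: the easy inclusion via H\"older against the weight factorization $\widetilde{\eta}\asymp\widetilde{\omega}^{q/p_1}\widetilde{\nu}^{q/p_2}$ (your Littlewood--Paley/Leibniz detour is unnecessary -- one can apply H\"older directly to $\int_\D|\varphi\psi|^q\widetilde{\eta}\,dA$ with exponents $p_1/q$ and $p_2/q$, using $\|\cdot\|_{A^p_\tau}\asymp\|\cdot\|_{A^p_{\widetilde{\tau}}}$), and the hard inclusion by computing $\|b\|_{A^{p_1}_\omega\odot A^{p_2}_\nu}=\sup_{\|F\|=1}|F(b)|$ and representing each functional $F$ by an analytic symbol lying in the dual of $A^q_\eta$. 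But your duality step has two genuine gaps. First, you assert that a bounded functional on the weakly factored space ``can be represented as $\langle\varphi\psi,f\rangle_\rho$ for an analytic symbol $f$'' without producing $f$. The paper obtains $f$ by restricting $F$ to $A^{p_2}_{\widetilde{\nu}}$ (take $\varphi\equiv1$, noting $\|1\cdot b\|_{\odot}\le\|1\|_{A^{p_1}_\omega}\|b\|_{A^{p_2}_\nu}$) and invoking the known duality $(A^{p_2}_{\widetilde{\nu}})^\star\simeq A^{p_2'}_\lambda$ via the $A^2_\alpha$-pairing for a large standard $\alpha$; this step must be supplied, since it is exactly here that the functional acquires an analytic representative.

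Second, and more seriously, the Hankel operator that then arises is $h_f^\alpha:A^{p_1}_\omega\to A^{p_2'}_\lambda$, in which the projection weight (the standard weight $\alpha$), the source weight $\omega$ and the target weight $\lambda$ are three \emph{different} weights. Theorem~\ref{theorem} as stated only treats $h_f^\nu:A^p_\omega\to A^q_\nu$, with the same $\nu$ inducing both the projection and the target space, so it does not apply; the paper needs and separately records a three-weight variant (Theorem~\ref{theorem-three weights-1}, used through Proposition~\ref{weak factorization-pro2}) obtained by modifying the proof of Theorem~\ref{theorem}. Your proposed fix of taking the pairing weight $\rho$ to be a boosted version of $\nu$ does not align the weights either, because the target space of the resulting Hankel operator is still the dual of $A^{p_2}_{\widetilde{\nu}}$ with respect to whatever pairing you chose, not $A^{p_2'}_\nu$ itself. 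You correctly flag the ``alignment of pairings and weights'' as the main obstacle, but it is not a routine verification: it requires proving the three-weight characterization, and only after that does the hypothesis \eqref{weak factorization-eq3} identify the symbol class with $A^{q'}_\sigma$, the dual of $A^q_\eta$. The remaining mechanism (dual-norm computation on $H^\infty$ plus density, rather than an appeal to ``the duals coincide, hence the spaces coincide'') is the standard one and is recoverable from your sketch.
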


One may very well criticize the above weak factorization because a strong factorization analogous to this exists when $\om=\nu=\eta\in\DD$ by \cite[Theorem~5]{KR}. It states that, if $0<p,p_1,p_2<\infty$ and $\om\in\DD$ such that $p^{-1}=p_1^{-1}+p_2^{-1}$, then for each $f\in A^p_\om$ there exist $f_1\in A^{p_1}_\om$ and $f_2\in A^{p_2}_\om$ such that $f=f_1f_2$ and
    \begin{equation}\label{lkjhlajhsgf}
    \|f_1\|_{A^{p_1}_\om}^{p}\|f_2\|_{A^{p_2}_\om}^{p}
    \le\frac{p}{p_1}\|f_1\|_{A^{p_1}_\om}^{p_1}+\frac{p}{p_2}\|f_2\|_{A^{p_2}_\om}^{p_2}
    \le C\|f\|_{A^p_\om}^p\le C\|f_1\|_{A^{p_1}_\om}^{p}\|f_2\|_{A^{p_2}_\om}^{p}
    \end{equation}
for some constant $C=C(\om)>0$. The method of proof employed in \cite{KR} is inherited from Horowitz' original probabilistic argument~\cite{Horowitz} which, apart from being clever, seems to be hardly applicable in the case when the weights are distinct. At this point we do not even know whether such strong factorization should exist in our setting, and thus this matter remains unsettled here.

It is justified to ask when the asymptotic equality \eqref{weak factorization-eq3} can be satisfied. The case of standard weights with equality is discussed in \cite{Pau-Zhao}, but in our setting the question does not have an immediate answer. However, we can say that if $1<q<\infty$ and $\eta,\om\in\DDD$ are fixed, then there exist $1<p_2<p_1<\infty$ and $\nu\in\DDD$ such that \eqref{weak factorization-eq3} is valid. Namely, by using \cite[Lemma~2.1]{PelSum14} and \cite[Lemma~B]{PRS1} one can see that if the quotient $p_1/p_2$ is sufficiently large, depending on $\eta$ and $\om$, then $\nu$ defined by the identity $\widetilde{\eta}^{\frac{1}{q}}=\widetilde{\omega}^{\frac{1}{p_{1}}}\widetilde{\nu}^{\frac{1}{p_{2}}}$ satisfies $\widetilde{\nu}\in\DDD$, but this is equivalent to $\nu\in\DDD$ by \cite[Theorem~9]{PR2020}.

We now return back to the small Hankel operator. By Theorem~\ref{theorem}, in the upper triangular case $1<q<p<\infty$, the condition that characterizes the boundedness of $h^\nu_f: A^p_{\om}\rightarrow A^q_{\nu}$ is an integral condition which in certain cases reduces to the requirement that $f^{(n)}$ belongs to a specific Bergman space. Our next result shows that in the case $p\le q$ and $q>1$ it is a condition on the growth of the maximum modulus of $f^{(n)}$ that characterizes the boundedness.


\begin{theorem}\label{theorem1}
Let $0<p\le q<\infty$, $1<q<\infty$ and $\om,\nu\in\DDD$, and let $f\in A^1_{\nu_{\log}}$. Then there exists an $N=N(\om,\nu,p,q)\in\N$ such that $h^\nu_f: A^p_{\om}\rightarrow A^q_{\nu}$ is bounded if and only if
  \begin{equation}\label{Eq:theorem2}
  \sup_{z\in\D}\left|f^{(n)}(z)\right|(1-|z|)^n
	\frac{\left(\widehat{\nu}(z)(1-|z|)\right)^{\frac{1}{q}}}{\left(\whw(z)(1-|z|)\right)^{\frac{1}{p}}}<\infty
  \end{equation}
for some (equivalently for all) $n\ge N$. Moreover,
  $$
  \|h^\nu_f\|_{A^p_\om\rightarrow A^{q}_{\nu}}
	\asymp\sup_{z\in\D}\left|f^{(n)}(z)\right|(1-|z|)^n
	\frac{\left(\widehat{\nu}(z)(1-|z|)\right)^{\frac{1}{q}}}{\left(\whw(z)(1-|z|)\right)^{\frac{1}{p}}}+\sum_{j=0}^{n-1}|f^{(j)}(0)|
  $$
for each fixed $n\ge N$.
\end{theorem}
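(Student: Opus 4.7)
The plan is to follow the structure established for Theorem~\ref{theorem}: rewrite $h^\nu_f$ via the boosted representation from Section~\ref{Sec:boosting}, attack sufficiency by duality against $A^{q'}_\nu$, and handle necessity by testing on appropriate kernel-type functions. Since $q>1$, self-adjointness of $P_\nu$ together with the moment identity $(\mu^n_\nu)_{2j+1}=\nu_{2j+1}\,j!/(j+n)!$ inherent to the boosting yields, for $g\in H^\infty$ and $h\in A^{q'}_\nu$, the key identity
\begin{equation*}
\langle h^\nu_f(g),h\rangle_\nu=\int_\D f(\z)\overline{g(\z)h(\z)}\nu(\z)\,dA(\z)=\int_\D F_n(\z)\overline{g(\z)h(\z)}\mu^n_\nu(\z)\,dA(\z),
\end{equation*}
where $F_n(\z)=\frac{d^n}{d\z^n}(\z^n f(\z))$. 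This identity drives both directions.

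For sufficiency I fix $n\ge N$ large and assume \eqref{Eq:theorem2}. Since $F_n$ and $f^{(n)}$ differ by a polynomial built from $f(0),\ldots,f^{(n-1)}(0)$, the same sup bound, with $M$ comparable to the right-hand side of the \emph{Moreover} clause, holds for $F_n$. Plugging this pointwise estimate into the boosted pairing and using the comparison $\mu^n_\nu\asymp(1-|\z|)^n\nu$ supplied by Section~\ref{Sec:boosting}, the problem reduces to the weighted two-function estimate
\begin{equation*}
\int_\D|g(\z)h(\z)|\,(1-|\z|)^{1/p-1/q}\,\frac{\whw(\z)^{1/p}}{\widehat\nu(\z)^{1/q}}\,\nu(\z)\,dA(\z)\lesssim\|g\|_{A^p_\om}\|h\|_{A^{q'}_\nu}.
\end{equation*}
In the regime $p\le q$ this amounts to a Carleson-type embedding of $A^p_\om$ into a prescribed weighted $L^q$-space, and for $\om,\nu\in\DDD$ it is expected to follow from a tessellation by pseudo-hyperbolic discs adapted to $\whw,\widehat\nu$ together with the $\DDD$-characterizations from \cite{PR2014,PelRat2020,PR2020}.

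For necessity, assume $h^\nu_f$ bounded and test on the pair $g_a(\z)=(1-\bar a\z)^{-b_1}/C_1(a)$ in $A^p_\om$ and $h_a(\z)=(1-\bar a\z)^{-b_2}/C_2(a)$ in $A^{q'}_\nu$, normalized to unit norm using the Forelli--Rudin-type asymptotics for $\DDD$-weights from \cite{PR2014}, with exponents $b_1,b_2$ chosen so that $g_ah_a$ concentrates near $\z=a$ at precisely the rate matching $\mu^n_\nu$. A mean-value estimate for the analytic function $F_n$ over a pseudo-hyperbolic disc around $a$, plugged into the boosted pairing, yields after rearrangement the pointwise bound
\begin{equation*}
|F_n(a)|(1-|a|)^n\,\frac{(\widehat\nu(a)(1-|a|))^{1/q}}{(\whw(a)(1-|a|))^{1/p}}\lesssim\|h^\nu_f\|_{A^p_\om\to A^q_\nu}.
\end{equation*}
This transfers to the required sup bound on $f^{(n)}(a)$ modulo the additive contribution $\sum_{j=0}^{n-1}|f^{(j)}(0)|$ appearing in the \emph{Moreover} clause. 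The independence of the condition on the particular value of $n\ge N$ is obtained by applying the bound at two consecutive values of $n$ and comparing via the boosting identity.

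The main obstacle is the Carleson-type embedding required for sufficiency: because $\om$ and $\nu$ are two independent $\DDD$-weights with no a priori compatibility between them, its verification calls for the full strength of the dyadic machinery for $\DDD$-weights encoded in the two-sided doubling condition \eqref{Dcheck}, and it is here rather than in the necessity step that the essential technical work of the proof will lie.
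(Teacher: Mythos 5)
Your sufficiency sketch is essentially the paper's argument in a slightly different wrapper: after the boosting and the pointwise bound on $F_{f,n}=\frac{d^n}{dz^n}(z^nf(z))$, everything reduces to checking that $(1-|\z|)^{\frac{q}{p}-2}\whw(\z)^{\frac{q}{p}}\,dA(\z)$ is a $q$-Carleson measure for $A^p_\om$, which is immediate from the pseudohyperbolic-disc characterization of \cite{LiuRattya} together with \cite[Lemma~2.1]{PelSum14}; the paper reaches the same point without dualizing against $A^{q'}_\nu$, simply by reusing the estimate $\|h^\nu_f(g)\|_{A^q_\nu}^q\lesssim\int_\D|g(\z)|^q|F_{f,n}(\z)|^q(1-|\z|)^{nq}\widetilde{\nu}(\z)\,dA(\z)$ already established in the proof of Theorem~\ref{theorem}. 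Contrary to your closing paragraph, this embedding is not where the technical difficulty lies.

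The genuine gap is in the necessity. With $g_a(\z)=(1-\overline{a}\z)^{-b_1}$ and $h_a(\z)=(1-\overline{a}\z)^{-b_2}$ the boosted pairing becomes $\int_\D F_{f,n}(\z)\overline{(1-\overline{a}\z)^{-(b_1+b_2)}}\mu^n_\nu(\z)\,dA(\z)$, and this is \emph{not} comparable to $F_{f,n}(a)$ times a normalizing constant: for a general $\nu\in\DDD$ the kernel $(1-\overline{a}\z)^{-b}$ is not the reproducing kernel of $A^2_{\mu^n_\nu}$, and the integrand is oscillatory, so no mean-value or subharmonicity argument on a pseudohyperbolic disc can bound the integral from below by $|F_{f,n}(a)|$ — the integral may vanish at points where $F_{f,n}$ does not, and conversely. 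What this test pair actually controls is a general fractional derivative $R^{\nu,\beta}(f)(a)$, i.e.\ precisely the alternative formulation the paper discusses after Theorem~\ref{theorem1}, not $f^{(n)}(a)$. The paper's fix is to choose $g=g_{z,k}=(1-\overline{z}\z)^{-k}$ and $h=h_{z,k,n,\nu}=(1-\overline{z}\z)^{k}B^{\mu^n_\nu}_z$, so that $gh=B^{\mu^n_\nu}_z$ and the pairing equals $F_{f,n}(z)$ \emph{exactly} by \eqref{id:generalizedreproducing}; the price is the norm estimate $\|h_{z,k,n,\nu}\|^{q'}_{A^{q'}_\nu}\lesssim\widehat{\nu}(z)^{1-q'}(1-|z|)^{1-q'(n+1-k)}$, which requires the kernel estimate of Lemma~\ref{weightedkernel} and an extra H\"older step when $q>2$ (so that $q'<2$). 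You would need to replace your test pair by one whose product is the actual reproducing kernel of $A^2_{\mu^n_\nu}$ and supply these kernel norm estimates for your argument to close.
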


The special case $q=p\in(1,\infty)$ and $\nu=\om\in\DDD$ in Theorem~\ref{theorem1} shows that $h^\om_f: A^p_{\om}\rightarrow A^p_{\om}$ is bounded if and only if $f$ belongs to the Bloch space $\B$ by \cite[Proposition~8]{zhu2}. Moreover, if $f\in\B$ and $A^p_\om\subset A^q_\nu$, then \eqref{Eq:theorem2} is satisfied by \cite[Theorem~1]{PR2015}. Recall that the Bloch space consists of $f\in\H(\D)$ such that
	$$
	\|f\|_{\B}=\sup_{z\in\D}|f'(z)|(1-|z|^2)+|f(0)|<\infty.
	$$

The proof of the sufficiency in Theorem~\ref{theorem1} is pretty straightforward now that useful estimates and auxiliary results are already established in the proof of Theorem~\ref{theorem}. In addition to standard arguments the only extra tool needed is the recent characterization of the $q$-Carleson measures for $A^p_\om$ with $\om\in\DDD$ in terms of quantities involving pseudohyperbolic discs~\cite[Theorem~2]{LiuRattya}. Recall that a positive Borel measure $\mu$ on $\D$ is a $q$-Carleson measure for $A^p_\om$ if the identity operator $Id: A^p_\om\rightarrow L^q_\mu$ is bounded.

For the necessity we will use a kernel estimate which slightly differs from \eqref{kernel} and which can be found in the recent literature~\cite{PR2017}. It states that, for each radial weight $\om$, we have
	\begin{equation*}
  \int_{\D}|(1-\overline{z}\z)^k B^\nu_z(\z)|^p\om(\z)\,dA(\z)
	\lesssim\int_0^{|z|}\frac{\whw(t)}{\widehat{\nu}(t)^p(1-t)^{p(1-k)}}\,dt+1,\quad z\in\D,
	\end{equation*}
provided $2\le p<\infty$, $k\in\N\cup\{0\}$ and $\nu\in\DD$. This estimate also plays an important role in the proof of the forthcoming result concerning the case $0<p\le 1=q$.

It is worth mentioning that an alternative characterization of the boundedness of $h^\nu_f$ in the case of Theorem~\ref{theorem1} can be obtained also in terms of so-called general fractional derivatives which would then play a role similar to the boosting in the proof. In \cite{Per2020,zhu1994}, the general fractional derivative of an analytic function $f(z)=\sum_{n=0}^\infty \widehat{f}(n)z^n$ related to two radial weights $\om$ and $\nu$ is defined by
	$$
	R^{\om,\nu}(f)(z)=\int_\D f(\z)\overline{B^\nu_z(\z)}\om(\z)\,dA(z),\quad z\in\D.
	$$
By imitating the proof of Theorem~\ref{theorem1}, one may easily show that there exists an $\alpha_0=\alpha_0(\om,\nu,p,q)$ such that $h^\nu_f: A^p_{\om}\rightarrow A^q_{\nu}$ is bounded under the hypotheses of the theorem if and only if
	$$
	\sup_{z\in\D}\left|R^{\nu,\alpha}(f)(z)\right|
	\frac{(1-|z|)^{\alpha+2-\frac{1}{p}-\frac{1}{q'}}}{\whw(z)^{\frac{1}{p}}\widehat{\nu}(z)^{\frac{1}{q'}}}<\infty
	$$
for some (equivalently for all) $\alpha>\alpha_0$, and moreover,	
	$$
 \|h^\nu_f\|_{A^p_\om\rightarrow A^{q}_{\nu}}
	\asymp\sup_{z\in\D}\left|R^{\nu,\alpha}(f)(z)\right|
	\frac{(1-|z|)^{\alpha+2-\frac{1}{p}-\frac{1}{q'}}}{\whw(z)^{\frac{1}{p}}\widehat{\nu}(z)^{\frac{1}{q'}}}
	$$
for each fixed $\alpha>\alpha_0$. This asymptotic equality together with Theorem \ref{theorem1} immediately yields a characterization of certain Bloch-type space. In particular, if $q=p\in(1,\infty)$ and $\nu=\om\in\DDD$, then we obtain that
	$$
	\|f\|_\B\asymp\sup_{z\in\D}\left|R^{\om,\alpha}(f)(z)\right|\frac{(1-|z|)^{\alpha+1}}{\whw(z)}
	$$
for each sufficiently large $\alpha$ depending on $\om$. Interested readers should compare this novel characterization of the Bloch space, proved with the aid of the small Hankel operator, with another fractional derivative characterization of $\B$ recently discovered in \cite{PR2017}.

The next two results concern the case $q=1$. We begin with the range $1<p<\infty$, where the statement involves the $p$-Bloch Carleson measures which are those positive Borel measures $\mu$ on $\D$ for which the identity operator $Id:\B\rightarrow L^p_\mu$ is bounded.


\begin{theorem}\label{theorem2}
Let $1<p<\infty$ and $\om,\nu\in\DDD$, and let $f\in A^1_{\nu_{\log}}$. Then there exists an $N=N(\om,\nu,p)\in\N$ such that $h^\nu_f: A^p_{\om}\rightarrow A^1_{\nu}$ is bounded if and only if
	\begin{equation}\label{Eq:theorem3}
	|f^{(n)}(z)|^{p'}(1-|z|^{2})^{np'}\left(\frac{\widehat{\nu}(z)}{\widehat{\omega}(z)}\right)^{p'}\frac{\widehat{\om}(z)}{1-|z|}\,dA(z)
	\end{equation}
is a $p'$-Bloch Carleson measure for some (equivalently for all) $n\geq N$. Moreover,	
	$$
	\|h_{f}^{\nu}\|_{A^{p}_{\omega}\to A^{1}_{\nu}}^{p'}
	\asymp\sup_{h\in\B\setminus\{0\}}\frac{\int_{\mathbb{D}}\left|h(z)\right|^{p'}\left|f^{(n)}(z)\right|^{p'}(1-|z|^{2})^{np'}
	\left(\frac{\widehat{\nu}(z)}{\widehat{\omega}(z)}\right)^{p'}\frac{\widehat{\om}(z)}{1-|z|}\,dA(z)}{\|h\|_{\mathcal{B}}^{p'}}
	+\sum_{j=0}^{n-1}|f^{(j)}(0)|^{p'}
	$$
for each fixed $n\ge N$.
\end{theorem}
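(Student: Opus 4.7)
The plan is to run the same duality skeleton as in the proof of Theorem~\ref{theorem}, now with the target norm $\|\cdot\|_{A^{1}_{\nu}}$ accessed through the Bloch duality $(A^{1}_{\nu})^{*}\cong\mathcal{B}$ (valid for $\nu\in\mathcal{D}$ and implemented through the $A^{2}_{\nu}$-pairing). Starting from the boosting representation
$$h_{f}^{\nu}(g)(z)=\int_{\D}\overline{g(\zeta)B_{z}^{\nu}(\zeta)}\,\frac{d^{n}}{d\zeta^{n}}\bigl(\zeta^{n}f(\zeta)\bigr)\mu_{\nu}^{n}(\zeta)\,dA(\zeta),$$
pairing with $h\in\mathcal{B}\cap A^{2}_{\nu}$ and applying Fubini together with the reproducing formula $h(\zeta)=\int h(z)B_{z}^{\nu}(\zeta)\nu(z)\,dA(z)$, the kernel $B_{z}^{\nu}$ collapses and one arrives at the bilinear identity
$$\langle h_{f}^{\nu}(g),h\rangle_{A^{2}_{\nu}}=\int_{\D}\frac{d^{n}}{d\zeta^{n}}\bigl(\zeta^{n}f(\zeta)\bigr)\,\overline{g(\zeta)h(\zeta)}\,\mu_{\nu}^{n}(\zeta)\,dA(\zeta).$$
The Leibniz expansion yields the leading term $\zeta^{n}f^{(n)}(\zeta)$ and lower-order terms in $f^{(j)}$ with $j<n$, which contribute the polynomial tail $\sum_{j=0}^{n-1}|f^{(j)}(0)|^{p'}$ by a standard argument.

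For the sufficiency direction, assuming the density in \eqref{Eq:theorem3} is a $p'$-Bloch Carleson measure, I would estimate the leading bilinear expression by H\"older's inequality with conjugate exponents $(p,p')$, splitting $\mu_{\nu}^{n}=\omega^{1/p}\cdot\mu_{\nu}^{n}\omega^{-1/p}$. The first factor yields $\|g\|_{A^{p}_{\omega}}$, and the second produces
$$\left(\int_{\D}|f^{(n)}(\zeta)|^{p'}|h(\zeta)|^{p'}(\mu_{\nu}^{n}(\zeta))^{p'}\omega(\zeta)^{-p'/p}\,dA(\zeta)\right)^{1/p'}.$$
Combining the moment identification $\|\zeta^{k}\|_{A^{2}_{\mu_{\nu}^{n}}}^{2}\asymp k^{-n}\|\zeta^{k}\|_{A^{2}_{\nu}}^{2}$ derived in the boosting section with the $\mathcal{D}$-weight machinery, the leftover density $(\mu_{\nu}^{n})^{p'}\omega^{-p'/p}$ should match $(1-|\zeta|)^{np'-1}(\widehat{\nu}/\widehat{\omega})^{p'}\widehat{\omega}$ up to constants, so that the integral becomes exactly $\int |h|^{p'}\,d\mu$. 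Taking the supremum over $\|h\|_{\mathcal{B}}\le 1$ and invoking $\|h_{f}^{\nu}(g)\|_{A^{1}_{\nu}}\asymp\sup_{h}|\langle h_{f}^{\nu}(g),h\rangle_{A^{2}_{\nu}}|/\|h\|_{\mathcal{B}}$ yields the desired operator bound.

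For the necessity, I would plug randomized finite sums $g=\sum_{k}\varepsilon_{k}a_{k}$ of atoms $a_{k}$ of $A^{p}_{\omega}$, supplied by the atomic decomposition~\cite[Theorems~1 and~2]{PRS-2021}, supported on a pseudohyperbolic covering of $\D$, into the bilinear identity. Averaging via Khinchine's inequality converts the expectation into an $\ell^{p'}$-sum over the atomic centres, and comparing the resulting discretization with the Riemann-type sum of $\int |h|^{p'}\,d\mu$ one recovers, after taking the supremum over $\|h\|_{\mathcal{B}}\le 1$, the $p'$-Bloch Carleson measure condition with the correct constant.

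The main obstacle is the precise identification of the H\"older leftover $(\mu_{\nu}^{n})^{p'}\omega^{-p'/p}$ with the weight appearing in \eqref{Eq:theorem3}, because $\mu_{\nu}^{n}$ depends only on $(\nu,n)$. For $\omega\in\mathcal{R}$ one has $\omega\asymp\widetilde{\omega}$ and the match is immediate, but in the general class $\mathcal{D}$ one must combine the moment asymptotics of $\mu_{\nu}^{n}$ with the derivative-equivalent norms for $A^{p}_{\omega}$ and the kernel estimate~\eqref{kernel}; this interplay is also what dictates the threshold $N(\omega,\nu,p)$. A secondary difficulty, present in the necessity direction, is localizing the atomic test functions in $A^{p}_{\omega}$ and the Bloch test functions in $\mathcal{B}$ simultaneously without losing constants in the asymptotic equivalence, so that the polynomial tail $\sum_{j=0}^{n-1}|f^{(j)}(0)|^{p'}$ separates cleanly from the main Carleson term.
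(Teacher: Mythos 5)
Your sufficiency argument is essentially the paper's: boost the derivative via $\mu^n_\nu$, pair against $h\in\B$ through $(A^1_\nu)^\star\simeq\B$ with respect to the $A^2_\nu$-pairing, apply H\"older with exponents $(p,p')$, and match the leftover weight using $\mu^n_\nu(z)\asymp\widehat{\nu}(z)(1-|z|)^{n-1}$ together with $\|\cdot\|_{A^p_\om}\asymp\|\cdot\|_{A^p_{\widetilde\om}}$; the "identification obstacle" you single out is resolved exactly this way, and the only point you omit is the justification of Fubini, which the paper obtains from the hypothesis \eqref{Eq:theorem3} itself via H\"older and Lemma~\ref{Lemma:derivative}. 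Passing from $F_{f,n}=\frac{d^n}{dz^n}(z^nf)$ back to $f^{(n)}$ is Lemma~\ref{lemma:bloch-measure}. That half is sound.

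The necessity is where the proposal has a genuine gap. You import the atomic-decomposition-plus-Khinchine scheme from Theorem~\ref{theorem}, but that scheme is built to produce an \emph{integral} condition on $f^{(n)}$ alone: Khinchine is applied inside the $q$-th power integral $\|S_f^{\nu,\alpha}(\sum\varepsilon\lambda F_{j,l,k})\|_{A^q_V}^q$, and the free dual variable is absorbed by the reflexive duality $(A^q_V)^\star\simeq A^{q'}_{\widetilde\nu}$. For $q=1$ the dual is $\B$, the Bloch function $h$ cannot be absorbed, and the characterizing condition must retain $h$. If instead you fix $h$ and randomize only the atoms in the scalar quantity $\sum_k\varepsilon_k\lambda_k\langle hF_{j,l,k},F_{f,n}\rangle_{A^2_{\mu^n_\nu}}$, Khinchine returns an $\ell^2$ bound on the coefficients, and dualizing against $\ell^p$ yields $\ell^{2p/(p-2)}$ (or $\ell^\infty$ for $p\le2$) -- not the $\ell^{p'}$ summability your "Riemann sum" comparison needs. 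Worse, that comparison presupposes a pointwise lower bound of the type $|\langle hF_{j,l,k},F_{f,n}\rangle_{A^2_{\mu^n_\nu}}|\gtrsim|h(\z_{j,l}^k)||F_{f,n}(\z_{j,l}^k)|\cdot(\text{weight})$, which is false in general: the pairing only sees the Bergman projection $P_{\mu^n_\nu}(\overline{h}F_{f,n})$, and recovering $|hF_{f,n}|$ from $|P_{\mu^n_\nu}(\overline{h}F_{f,n})|$ is precisely the crux of the theorem.

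The paper's necessity avoids atoms entirely. It bounds $\|S^{\nu,\mu^n_\nu}_f(h)\|_{A^{p'}_{\tau_{[np']}}}\lesssim\|h_f^\nu\|_{A^p_\om\to A^1_\nu}\|h\|_\B$ by the duality $(A^{p'}_{\tau_{[np']}})^\star\simeq A^p_{\widetilde\om}$ via the $A^2_{\mu^n_\nu}$-pairing (which requires the boundedness of $P_{\mu^n_\nu}$ on $L^p_{\widetilde\om}$, hence the threshold $N$), and then controls the commutator $F_{f,n}\overline{h_r}-S^{\nu,\mu^n_\nu}_f(h_r)$ in $L^{p'}_{\tau_{[np']}}$ by writing it as an integral against $(h_r(z)-h_r(u))B_z^{\mu^n_\nu}(u)$ and exploiting the Bloch oscillation estimate $|h(z)-h(\zeta)|\lesssim\|h\|_\B\,|z-\zeta|\,|1-\overline{\zeta}z|^{1-2c}(1-|z|)^{c-1}(1-|\zeta|)^{c-1}$ together with Cauchy--Schwarz and the kernel estimates of Lemma~\ref{weightedkernel}. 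Your proposal contains no analogue of this commutator step, and without it the necessity does not go through.
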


Theorem~\ref{theorem2} has an obvious defect. Namely, the existing literature does not seem to offer a reasonable characterization of the $p$-Bloch Carleson measures, yet some attempts can be found in \cite{GPP-GR2008}. Therefore a critical reader may very well say that Theorem~\ref{theorem2} basically offers a possibly non-trivial reformulation of the problem of characterizing the boundedness of $h^\nu_f: A^p_{\om}\rightarrow A^1_{\nu}$ in terms of the $p$-Bloch Carleson measures. However, Theorem~\ref{theorem2} implies that there exists an $N=N(p,\om,\nu)\in\N$ such that for each fixed $n\ge N$ we have
	$$
	\|h^\nu_{f}\|_{A^p_\om\rightarrow A^1_\nu}^{p'}
	\lesssim\int_\D\left|f^{(n)}(z)\right|^{p'}(1-|z|)^{np'}
		\left(\frac{\widehat{\nu}(z)}{\whw(z)}\right)^{p'}\left(\log\frac{e}{1-|z|}\right)^{p'}\frac{\widehat{\om}(z)}{1-|z|}\,dA(z)+\sum_{j=0}^{n-1}|f^{(j)}(0)|^{p'}.
	$$
We have been unable to judge whether or not $\lesssim$ above can be replaced by $\asymp$. This case remains unsettled and seems to call for further research.

Most of the tools needed to achieve Theorem~\ref{theorem2} are already present in the proofs of Theorems~\ref{theorem} and~\ref{theorem1}. The only extra ingredient needed is the duality relation $(A^1_{\nu})^\star\simeq\B$ via the $A^2_\nu$-pairing, valid for $\nu\in\DDD$ by \cite[Theorem~3]{PelRat2020}.

The next theorem concerns the case $0<p\le1=q$.


\begin{theorem}\label{theorem3}
Let $0<p\le1$ and $\om,\nu\in\DDD$, and let $f\in A^1_{\nu_{\log}}$. Then there exists an $N=N(\om,\nu,p)\in\N$ such that $h^\nu_f: A^p_{\om}\rightarrow A^1_{\nu}$ is bounded if and only if
	\begin{equation}\label{Eq:Thm3-hypothesis}
  \sup_{z\in\D}\left|f^{(n)}(z)\right|(1-|z|)^n
	\frac{\widehat{\nu}(z)(1-|z|)}{\left(\whw(z)(1-|z|)\right)^{\frac{1}{p}}}\log\frac{e}{1-|z|}<\infty
  \end{equation}
for some (equivalently for all) $n\ge N$. Moreover,
	$$
	\|h^\nu_{f}\|_{A^p_\om\rightarrow A^1_\nu}
	\asymp\sup_{z\in\D}\left|f^{(n)}(z)\right|(1-|z|)^n
	\frac{\widehat{\nu}(z)(1-|z|)}{\left(\whw(z)(1-|z|)\right)^{\frac{1}{p}}}\log\frac{e}{1-|z|}+\sum_{j=0}^{n-1}|f^{(j)}(0)|
	$$
for each fixed $n\ge N$.
\end{theorem}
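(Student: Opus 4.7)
My plan is to combine the boosting representation from Section~\ref{Sec:boosting} with the $L^1_\nu$-norm bound of the reproducing kernel and the pointwise growth estimate available for $A^p_\om$-functions when $p\le 1$. I will work throughout with the representation
$$
h_f^\nu(g)(z)=\int_\D\overline{g(\zeta)B_z^\nu(\zeta)}\,F_n(\zeta)\,\mu^n_\nu(\zeta)\,dA(\zeta),\qquad g\in H^\infty,
$$
where $F_n(\zeta)=\frac{d^n}{d\zeta^n}(\zeta^n f(\zeta))$ differs from $\zeta^n f^{(n)}(\zeta)$ by a polynomial combination of the lower-order derivatives $f^{(j)}(\zeta)$ for $0\le j<n$, which will end up contributing only the boundary term $\sum_{j=0}^{n-1}|f^{(j)}(0)|$ to the final estimate. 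The threshold $N=N(\om,\nu,p)$ is to be chosen so that $\mu^n_\nu$ is well behaved and the technical integrals arising below converge.

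For the sufficiency, I let $C$ denote the supremum in \eqref{Eq:Thm3-hypothesis} and fix $g\in A^p_\om$. Applying Fubini's theorem to the above representation yields
$$
\|h_f^\nu(g)\|_{A^1_\nu}\le\int_\D|g(\zeta)|\,|F_n(\zeta)|\,\mu^n_\nu(\zeta)\,\|B^\nu_\zeta\|_{A^1_\nu}\,dA(\zeta),
$$
and I intend to bound the right-hand side by invoking the asymptotic $\|B^\nu_\zeta\|_{A^1_\nu}\asymp\log\frac{e}{1-|\zeta|}$, valid for $\nu\in\DDD$ and the true source of the logarithmic factor in \eqref{Eq:Thm3-hypothesis}, together with the pointwise estimate $|g(\zeta)|\lesssim\|g\|_{A^p_\om}(\whw(\zeta)(1-|\zeta|))^{-1/p}$, available for $\om\in\DDD$ and $0<p\le 1$. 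Substituting the hypothesis then reduces matters to verifying that
$$
\int_\D\frac{\mu^n_\nu(\zeta)}{(1-|\zeta|)^{n+1}\widehat{\nu}(\zeta)}\,dA(\zeta)<\infty,
$$
which I expect to follow, for sufficiently large $n\ge N$, from the explicit properties of $\mu^n_\nu$ established in Section~\ref{Sec:boosting}.

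For the necessity, I plan to test $h^\nu_f$ on a suitably normalized family of functions $g_z\in A^p_\om$ built from powers of the Bergman reproducing kernel for an auxiliary weight, chosen via the kernel norm estimate \eqref{kernel} so that $\|g_z\|_{A^p_\om}\asymp 1$ while $|g_z(\zeta)|$ concentrates near $z$. The duality $(A^1_\nu)^\star\simeq\B$ via the $A^2_\nu$-pairing, valid for $\nu\in\DDD$ by \cite[Theorem~3]{PelRat2020}, will be used to bound $\|h^\nu_f(g_z)\|_{A^1_\nu}$ from below through a pairing with the Bloch test function $h_z(w)=\log\frac{e}{1-\overline{z}w}$, whose Bloch norm is uniformly bounded. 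Evaluating this pairing via the boosting representation and standard Forelli--Rudin type integral estimates should extract a factor comparable to $|f^{(n)}(z)|(1-|z|)^n\widehat{\nu}(z)(1-|z|)\log\frac{e}{1-|z|}$; dividing by the normalization $\|g_z\|_{A^p_\om}$, whose reciprocal contributes the remaining $(\whw(z)(1-|z|))^{-1/p}$, and taking the supremum in $z$ will yield \eqref{Eq:Thm3-hypothesis}. Equivalence of the characterization for different $n\ge N$ will then be a routine consequence of the Littlewood--Paley and fractional-derivative identities already used in the proofs of Theorems~\ref{theorem} and~\ref{theorem1}.

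The hardest step I anticipate is the necessity: calibrating the test functions $g_z$ and the Bloch pairing $h_z$ so that both the weight ratio $\widehat{\nu}(z)/\whw(z)^{1/p}$ and the logarithmic factor come out cleanly, without cancellations. Since the only interaction allowed between $\om$ and $\nu$ is that both lie in $\DDD$, the auxiliary weight used to build $g_z$ must be tuned through $\om$ alone, whereas the pairing computation is governed by $\nu$, and matching the two families will lean heavily on the Carleson-square identities for $\DDD$-weights from \cite{PR2014,PelRat2020}. This is precisely where the boosting of the derivative is expected to pay off: without it, the kernel derivatives $(B^\nu_z)^{(k)}$ would couple $\om$ and $\nu$ in a way that would obstruct the clean extraction of these two factors.
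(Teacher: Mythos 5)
Your sufficiency argument has a fatal gap. After substituting the hypothesis \eqref{Eq:Thm3-hypothesis}, the pointwise bound $|g(\zeta)|\lesssim\|g\|_{A^p_\om}(\whw(\zeta)(1-|\zeta|))^{-1/p}$ and the asymptotics $\mu^n_\nu(\zeta)\asymp\widehat{\nu}(\zeta)(1-|\zeta|)^{n-1}$ and $\|B^\nu_\zeta\|_{A^1_\nu}\asymp\log\frac{e}{1-|\zeta|}$, the weight-ratio and logarithmic factors cancel exactly and your integrand reduces to a constant times $\|g\|_{A^p_\om}(1-|\zeta|)^{-2}$; equivalently, the condition you propose to verify is $\int_\D\mu^n_\nu(\zeta)(1-|\zeta|)^{-(n+1)}\widehat{\nu}(\zeta)^{-1}\,dA(\zeta)\asymp\int_\D(1-|\zeta|)^{-2}\,dA(\zeta)<\infty$, which is false, and the powers of $n$ cancel so that no choice of $N$ rescues it. The loss comes from replacing $|g|$ by its worst-case pointwise growth globally. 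The way through is to keep $|g|$ inside and recognize that the bound $\|h^\nu_f(g)\|_{A^1_\nu}\lesssim\int_\D|g|\,d\eta$ with $d\eta(\zeta)=|F_{f,n}(\zeta)|\mu^n_\nu(\zeta)\log\frac{e}{1-|\zeta|}\,dA(\zeta)$ asks precisely that $\eta$ be a $1$-Carleson measure for $A^p_\om$; since $p\le 1$, this is characterized by $\sup_z\eta(\Delta(z,r))/\om(\Delta(z,r))^{1/p}<\infty$ by \cite[Theorem~2]{LiuRattya}, and \emph{locally} on a pseudohyperbolic disc $\int_{\Delta(z,r)}(1-|\zeta|)^{-2}\,dA(\zeta)\asymp1$, so the hypothesis does give the required bound. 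This is exactly what the paper does.

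Your necessity outline follows the same architecture as the paper (testing against kernel-type functions and using $(A^1_\nu)^\star\simeq\B$ with the logarithmic Bloch function), but two points need attention. First, the paper's test function is $g_z=B_z^{\mu^n_\nu}$, i.e.\ the kernel of the \emph{boosted} weight $\mu^n_\nu$ determined by $\nu$ and $n$; the weight $\om$ enters only through computing $\|B_z^{\mu^n_\nu}\|_{A^p_\om}$ via \eqref{kernel}, not through the construction of the test function, so your plan to "tune the auxiliary weight through $\om$ alone" points in the wrong direction. Second, pairing $h^\nu_f(B_z^{\mu^n_\nu})$ with $h_z(w)=\log\frac{e}{1-\overline{z}w}$ produces $\int_\D F_{f,n}(\zeta)\overline{B_z^{\mu^n_\nu}(\zeta)}\log\frac{e}{1-\overline{\zeta}z}\,\mu^n_\nu(\zeta)\,dA(\zeta)$, which is \emph{not} $F_{f,n}(z)\log\frac{e}{1-|z|}$; one must split $\log\frac{e}{1-|z|}=\log\frac{1-\overline{\zeta}z}{1-|z|}+\log\frac{e}{1-\overline{\zeta}z}$ and control the correction term by a second testing with $g(\zeta)=B_z^{\mu^n_\nu}(\zeta)\log\frac{1-\overline{z}\zeta}{1-|z|}$, whose $A^p_\om$-norm is estimated using $\log x\le x-1$ and the kernel estimate of Lemma~\ref{weightedkernel}. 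Your sketch does not account for this step, and without it the lower bound does not come out.
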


The case $p=1$ and $\nu=\om\in\DDD$ in Theorem~\ref{theorem3} shows that $h^\om_f: A^1_{\om}\rightarrow A^1_{\om}$ is bounded if and only if
	\begin{equation*}
  \sup_{z\in\D}\left|f^{(n)}(z)\right|(1-|z|)^n\log\frac{e}{1-|z|}<\infty.
  \end{equation*}
It is easy to see that this condition is equivalent to
	\begin{equation}\label{msufk}
  \sup_{z\in\D}\left|f'(z)\right|(1-|z|)\log\frac{e}{1-|z|}<\infty.
  \end{equation}
Moreover, if $f$ satisfies \eqref{msufk} and $A^p_\om\subset A^1_\nu$, then \eqref{Eq:Thm3-hypothesis} is satisfied by \cite[Theorem~1]{PR2015}.

The last two theorems concern the case $0<q<1$. The existing results concerning the boundedness of the small Hankel operator acting between the Bergman spaces induced by standard weights when the parameter $q$ of the target space is smaller than 1 can be found in \cite{B-O-T-W,B-L}. The arguments employed there include weak Bergman spaces and the weak type (1,1) property of the Bergman projection. Observe that there is a close and well-known connection between the weak (1,1)- and $A_{\omega}^{q}$-norms~\cite{G-2014}. These techniques are not applicable in the setting of weights in $\mathcal{D}$  because of the lack of a suitable description of the weak type (1,1) boundedness of the Bergman projection. Therefore we argue in a different way and establish a more direct proof which also applies in the classical case.

We begin with the range $1<p<\infty$.


\begin{theorem}\label{theorem4}
Let $0<q<1$, $1<p<\infty$ and $\om,\nu\in\DDD$, and let $f\in A^1_{\nu_{\log}}$. Then there exists an $N=N(\om,\nu,p,q)\in\N$ such that $h^\nu_f: A^p_{\om}\rightarrow A^q_{\nu}$ is bounded if and only if
		\begin{equation}\label{result4}
    \int_\D\left|f^{(n)}(z)\right|^{p'}(1-|z|)^{np'}
		\left(\frac{\widehat{\nu}(z)}{\whw(z)}\right)^{p'}\frac{\widehat{\om}(z)}{1-|z|}\,dA(z)<\infty
		\end{equation}
for some (equivalently for all) $n\ge N$. Moreover,
	$$
	\|h^\nu_{f}\|^{p'}_{A^p_\om\rightarrow A^q_\nu}
	\asymp\int_\D\left|f^{(n)}(z)\right|^{p'}(1-|z|)^{np'}
	\left(\frac{\widehat{\nu}(z)}{\whw(z)}\right)^{p'}\frac{\widehat{\om}(z)}{1-|z|}\,dA(z)+\sum_{j=0}^{n-1}|f^{(j)}(0)|^{p'}.
	$$
for each fixed $n\ge N$.
\end{theorem}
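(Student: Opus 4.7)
\textbf{Proof plan for Theorem~\ref{theorem4}.} The plan is to adapt the proof of Theorem~\ref{theorem} (the upper-triangular case $1<q<p<\infty$) to the quasi-Banach target regime $0<q<1$, bypassing the duality and weak-$(1,1)$ techniques that the authors explain are unavailable when $\nu\in\mathcal{D}$. The starting point is the boosted representation from Section~\ref{Sec:boosting}:
\[
h_f^\nu(g)(z)=\int_\D \overline{g(\z)B_z^\nu(\z)}\,F_n(\z)\,\mu_\nu^n(\z)\,dA(\z),\qquad g\in H^\infty,
\]
where $F_n(\z)=(d^n/d\z^n)(\z^n f(\z))=\z^n f^{(n)}(\z)+\sum_{j<n}c_{n,j}\z^j f^{(j)}(\z)$. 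The lower-order terms produce the sum $\sum_{j=0}^{n-1}|f^{(j)}(0)|^{p'}$ in the norm equivalence, while the moment identification for $\mu_\nu^n$ supplies the factor $(1-|\z|)^{np'}$ that eventually appears in~\eqref{result4}. Fix $n=n(\om,\nu,p,q)\in\N$ large enough so that every integral below is convergent.

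\textbf{Sufficiency.} Hölder's inequality with exponents $p$ and $p'$ applied inside the boosted integral, with the splitting $\mu_\nu^n=\om^{1/p}(\mu_\nu^n\om^{-1/p})$, yields the pointwise bound
\[
|h_f^\nu(g)(z)|^{p'}\lesssim\|g\|_{A^p_\om}^{p'}\int_\D |f^{(n)}(\z)|^{p'}(1-|\z|)^{np'}|B_z^\nu(\z)|^{p'}(\mu_\nu^n(\z))^{p'}\om(\z)^{1-p'}\,dA(\z).
\]
Since $q<1<p'$ and $\nu(\D)<\infty$, Hölder's inequality in $z$ with exponents $p'/q$ and $(p'/q)'$ gives $\|h_f^\nu(g)\|_{A^q_\nu}^{p'}\lesssim\|\nu\|_{L^1}^{p'/q-1}\int_\D I(z)\nu(z)\,dA(z)$, where $I(z)$ is the inner $\zeta$-integral above. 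Fubini and the kernel norm estimate~\eqref{kernel} (with $k=0$, exponent $p'$, inner weight $\nu$) bound $\int_\D|B_z^\nu(\z)|^{p'}\nu(z)\,dA(z)$ by $\int_0^{|\z|}\widehat\nu(t)^{1-p'}(1-t)^{-p'}\,dt+1$, which is comparable to $(\widehat\nu(\z)(1-|\z|))^{1-p'}$ for $\nu\in\mathcal{D}$. Combining with the Section~\ref{Sec:boosting} identification of $\mu_\nu^n$ and standard doubling-weight comparisons between $\om$ and $\widehat\om/(1-|\cdot|)$, the aggregate weight that multiplies $|f^{(n)}(\z)|^{p'}(1-|\z|)^{np'}$ collapses to exactly $(\widehat\nu(\z)/\widehat\om(\z))^{p'}\,\widehat\om(\z)/(1-|\z|)$, recovering~\eqref{result4}.

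\textbf{Necessity.} I would use the atomic decomposition of $A^p_\om$ from \cite[Theorems~1 and 2]{PRS-2021} together with a Rademacher-Khinchine randomization. For a suitable $\om$-adapted separated lattice $\{z_k\}\subset\D$ with associated atoms $a_k$, set $g_t(\z)=\sum_k r_k(t)\lambda_k a_k(\z)$, so $\|g_t\|_{A^p_\om}^p\asymp\sum_k|\lambda_k|^p$ uniformly in $t$. Apply the boundedness assumption to $g_t$, raise the inequality to the power $p'$, integrate in the Rademacher variable $t$, and invoke Khinchine's inequality on both sides to convert the estimate into a comparison between weighted $\ell^2$-sums over $\{z_k\}$. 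A careful choice of the $\lambda_k$, dual to the discretization of the integrand in~\eqref{result4} at $\{z_k\}$, then produces the desired lower bound; doubling-weight comparisons pass from the lattice sum back to the continuous integral~\eqref{result4}, and the $n$-independence across $n\ge N$ comes from the Littlewood-Paley description of $A^p_\om$ in~\cite[Theorem~5]{PelRat2020}.

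\textbf{Main obstacle.} The hard part is the sufficiency for $0<q<1$: because $A^q_\nu$ is not locally convex, neither the $A^{q'}_\nu$-duality used in Theorem~\ref{theorem} nor the Bloch duality used in Theorem~\ref{theorem2} is available, and the weak-$(1,1)$ argument of \cite{B-L,B-O-T-W} relies on a description of the Bergman projection that is not known for $\nu\in\mathcal{D}$. The resolution is the direct chain Hölder~$\to$ finite-measure Hölder (which reduces $\|\cdot\|_{A^q_\nu}^{p'}$ to an $L^{p'}(\nu)$-type integral)~$\to$ Fubini~$\to$ kernel estimate~\eqref{kernel}. The delicate point is to verify, through the choice of $n$ and the weight arithmetic involving $\mu_\nu^n$ and the doubling properties of $\om,\nu$, that this chain recovers the sharp weight $(\widehat\nu/\widehat\om)^{p'}\widehat\om/(1-|\cdot|)$ without losing or gaining any power of $(1-|\cdot|)$, $\widehat\om$, or $\widehat\nu$.
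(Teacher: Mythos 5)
Your sufficiency argument has a genuine quantitative gap: the order in which you apply the two H\"older inequalities loses an unbounded factor, so the weight does \emph{not} collapse to the one in \eqref{result4}. Concretely, applying H\"older $(p,p')$ pointwise in $z$ first and then integrating $|B_z^\nu(\zeta)|^{p'}$ against $\nu(z)\,dA(z)$ produces, via the kernel estimate, the factor $\bigl(\widehat{\nu}(\zeta)(1-|\zeta|)\bigr)^{1-p'}$. Tracking all powers (and replacing your $\omega^{1-p'}$ by $\widetilde{\omega}^{1-p'}$, which you must do anyway since a weight in $\mathcal{D}$ may vanish on sets of positive measure, making $\omega^{1-p'}$ infinite there), the aggregate weight is
\[
\widehat{\nu}(\zeta)\,\widehat{\omega}(\zeta)^{1-p'}(1-|\zeta|)^{np'-p'}
=\left[(1-|\zeta|)^{np'}\Bigl(\tfrac{\widehat{\nu}(\zeta)}{\widehat{\omega}(\zeta)}\Bigr)^{p'}\tfrac{\widehat{\omega}(\zeta)}{1-|\zeta|}\right]\cdot\bigl(\widehat{\nu}(\zeta)(1-|\zeta|)\bigr)^{1-p'},
\]
and since $1-p'<0$ the extra factor tends to $\infty$ at the boundary. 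Thus your chain only proves boundedness under a condition strictly stronger than \eqref{result4}. The paper avoids this by integrating out the $z$-variable at exponent $1$ \emph{before} ever raising the kernel to the power $p'$: one writes $\int\Phi^q\widetilde{\nu}\le\bigl(\int\Phi\,b\,\widetilde{\nu}\bigr)^q\bigl(\int\widetilde{\nu}\,b^{-q/(1-q)}\bigr)^{1-q}$ with the auxiliary function $b=\widehat{\nu}^{\frac1q-1}(1-|\cdot|)^{-\varepsilon}$, chosen so that the second factor is finite and $\int_\D|B_\zeta^\nu(z)|\,b(z)\widetilde{\nu}(z)\,dA(z)\lesssim1$; only after this reduction to $\|h_f^\nu(g)\|_{A^q_\nu}\lesssim\int_\D|g(\zeta)|\,|F_{f,n}(\zeta)|\,\widehat{\nu}(\zeta)(1-|\zeta|)^{n-1}\,dA(\zeta)$ is H\"older $(p,p')$ applied in $\zeta$, and then the sharp weight comes out exactly.

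For the necessity, the atomic-decomposition-plus-Khinchine machinery is both unnecessary here and, if modeled on the proof of Theorem~\ref{theorem}, aimed at the wrong exponent: the $(\ell^{p/q})^\star\simeq\ell^{p/(p-q)}$ duality at the end of that argument produces the exponent $pq/(p-q)$, whereas for $0<q<1$ the correct exponent in \eqref{result4} is $p'$, independent of $q$. The paper's route is much shorter: since $q<1$, subharmonicity gives $|h_f^\nu(g)(0)|\lesssim\|h_f^\nu(g)\|_{A^q_\nu}$, and by \eqref{Id: newrepresentation} the value at the origin is $\langle F_{f,n},g\rangle_{A^2_{\mu^n_\nu}}$ up to a constant; hence $|\langle F_{f,n},g\rangle_{A^2_{\mu^n_\nu}}|\lesssim\|h_f^\nu\|\,\|g\|_{A^p_{\widetilde{\omega}}}$ for all $g\in H^\infty$, and the duality $(A^p_{\widetilde{\omega}})^\star\simeq A^{p'}_{W_{p'}}$ with $W_{p'}=(\mu^n_\nu/\widetilde{\omega})^{p'}\widetilde{\omega}$ via the $A^2_{\mu^n_\nu}$-pairing (valid once $n$ is large enough that $P_{\mu^n_\nu}$ is bounded on $L^p_{\widetilde{\omega}}$) yields $\|F_{f,n}\|_{A^{p'}_{W_{p'}}}\lesssim\|h_f^\nu\|$ directly, which is \eqref{result4} after Lemma~\ref{Lemma:derivative}. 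You should adopt this point-evaluation-at-the-origin argument; it is precisely the device that replaces the unavailable $A^{q'}_\nu$-duality when $q<1$.
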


In the case $\nu=\om\in\DDD$ we deduce that
	$$
	\|h^\nu_{f}\|^{p'}_{A^p_\om\rightarrow A^q_\nu}
	\asymp\int_\D\left|f^{(n)}(z)\right|^{p'}(1-|z|)^{np'}
	\frac{\widehat{\om}(z)}{1-|z|}\,dA(z)+\sum_{j=0}^{n-1}|f^{(j)}(0)|^{p'}.
	$$
The right hand side is comparable to $\|f\|_{A^{p'}_\om}^{p'}$ by \cite[Theorem~5]{PelRat2020} and \cite[Proposition~5]{PRS1}.

An intriguing but not that obvious observation here is that \eqref{result4} actually also characterizes a certain three-weight inequality for small Hankel operator. To make this precise, following \cite{PeraRa}, set
	$$
	V_{\nu,q}(z)
	=\left(\frac{1}{q}-1\right)\widehat{\nu}(z)^{\frac{1}{q}}(1-|z|)^{\frac{1}{q}-2}
	+\frac{\nu(z)}{q} \widehat{\nu}(z)^{\frac{1}{q}-1}(1-|z|)^{\frac{1}{q}-1}, \quad z \in \mathbf{\D},
	$$
for each $\nu$ and $q$. Then the method employed in the proof of Theorem~\ref{theorem4} shows that $h^\nu_f: A^p_{\om}\rightarrow A^1_{V_{\nu,q}}$ is bounded if and only if \eqref{result4} is satisfied, under the hypotheses of the theorem. This makes us realize that $A^q_\nu$ is not the best target space for \eqref{result4} because $A^q_\nu\subsetneq A^1_{V_{\nu,q}}$. Details are left for an interested reader.

Our last main result concerns the case $0<p\le1$.


\begin{theorem}\label{theorem5}
Let $0<q<1$, $0<p\leq1$ and $\om,\nu\in\DDD$, and let $f\in A^1_{\nu_{\log}}$. Then there exists an $N=N(\om,\nu,p,q)\in\N$ such that $h^\nu_f: A^p_{\om}\rightarrow A^q_{\nu}$ is bounded if and only if
	\begin{equation}\label{result5}
  \sup_{z\in\D}\left|f^{(n)}(z)\right|(1-|z|)^n
	\frac{\widehat{\nu}(z)(1-|z|)}{\left(\whw(z)(1-|z|)\right)^{\frac{1}{p}}}<\infty
  \end{equation}
for some (equivalently for all) $n\ge N$. Moreover,
	$$
	\|h^\nu_{f}\|_{A^p_\om\rightarrow A^q_\nu}
	\asymp\sup_{z\in\D}\left|f^{(n)}(z)\right|(1-|z|)^n
	\frac{\widehat{\nu}(z)(1-|z|)}{\left(\whw(z)(1-|z|)\right)^{\frac{1}{p}}}+\sum_{j=0}^{n-1}|f^{(j)}(0)|
	$$
for each fixed $n\ge N$.
\end{theorem}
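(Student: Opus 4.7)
The plan is to adapt the boosting-based strategy from Theorems~\ref{theorem1}, \ref{theorem3}, and \ref{theorem4} to the simultaneously quasi-Banach setting where both $p\le 1$ and $q<1$, so that no duality on either side of $h_f^\nu$ is available. The starting point in both directions is the boosting representation
\[
h_f^\nu(g)(z)=\int_{\D}\overline{g(\z)B_z^{\nu}(\z)}\,\frac{d^n}{d\z^n}\bigl(\z^n f(\z)\bigr)\,\mu^n_{\nu}(\z)\,dA(\z),
\]
for $n\ge N(\om,\nu,p,q)$ large enough. Leibniz' rule splits the boosted symbol into the leading term $\z^n f^{(n)}(\z)$ and lower-order terms involving $f^{(j)}$ for $j<n$; the latter will contribute the constants $|f^{(j)}(0)|$ to the final norm equivalence.

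For the sufficiency, denote by $M$ the supremum in \eqref{result5}. Combining \eqref{result5} with the standard pointwise estimate $|g(\z)|\lesssim\|g\|_{A^p_\om}(\widehat\om(\z)(1-|\z|))^{-1/p}$, valid for $p\le 1$ and $\om\in\DD$, inside the boosting representation yields
\[
|h_f^\nu(g)(z)|\lesssim M\,\|g\|_{A^p_\om}\int_\D|B_z^\nu(\z)|\,\frac{\mu^n_\nu(\z)\,dA(\z)}{\widehat\nu(\z)(1-|\z|)^{n+1}},\quad z\in\D.
\]
Raising this inequality to the $q$-th power and integrating against $\nu$ reduces the sufficiency to controlling $\int_\D\bigl(\int_\D|B_z^\nu(\z)|\mu^n_\nu(\z)/(\widehat\nu(\z)(1-|\z|)^{n+1})\,dA(\z)\bigr)^q\nu(z)\,dA(z)$, which by the properties of $\mu^n_\nu$ from the boosting section, the kernel estimate \eqref{kernel}, and the dual one from \cite{PR2017} cited after Theorem~\ref{theorem1}, becomes a routine computation analogous to the parallel step in the proof of Theorem~\ref{theorem3}. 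The $|f^{(j)}(0)|$ terms absorb the contribution of the lower-order Leibniz summands.

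For the necessity, assume $h_f^\nu:A^p_\om\to A^q_\nu$ is bounded. Given $a\in\D$, we test $h_f^\nu$ on a function $g_a$ chosen so that the upper bound $\|h_f^\nu(g_a)\|_{A^q_\nu}\lesssim\|h_f^\nu\|\,\|g_a\|_{A^p_\om}$, combined with the subharmonicity lower bound $|h(a)|\lesssim\|h\|_{A^q_\nu}(\widehat\nu(a)(1-|a|))^{-1/q}$ applied to $h=h_f^\nu(g_a)$, yields the desired pointwise bound on $|f^{(n)}(a)|$. A direct testing with $g_a(\z)=(1-\overline{a}\z)^{-\gamma}$, for which $h_f^\nu(g_a)(a)$ is proportional to $f^{(\gamma)}(a)$ via the standard reproducing-kernel formula for $A^2_\nu$, produces the bound with exponent $1/q$ on $\widehat\nu(a)(1-|a|)$, which is weaker than the exponent $1$ appearing in~\eqref{result5}. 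To upgrade to the sharper exponent, the plan is to follow the atomic-decomposition route of \cite{PRS-2021} and randomize the test via Rademacher functions along a separated sequence in $\D$, applying Khinchine's inequality to exploit the $\ell^p$-structure on the source side and the quasi-subadditivity of $\|\cdot\|_{A^q_\nu}^q$ on the target side; this is the same circle of ideas that the paper uses to prove the necessity in Theorem~\ref{theorem}.

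The main obstacle is precisely the necessity step: upgrading the generic pointwise bound on analytic $A^q_\nu$-functions to the sharper pointwise condition in~\eqref{result5} with exponent $1$ rather than $1/q$ on $\widehat\nu(a)(1-|a|)$. This improvement is not accessible from a single test function and seems to require either the Rademacher--Khinchine randomization across a separated sequence of atoms, or an averaging over a M\"obius-covariant family of test functions exploiting the invariance properties of weights in $\DDD$ established in \cite{PelSum14,PR2020}. A secondary technical hurdle is the lack of an explicit closed form for the boosted measure $\mu^n_\nu$, so all kernel integrals in both directions depend on its doubling-type properties developed earlier in the paper.
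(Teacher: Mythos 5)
Both directions of your proposal have genuine gaps. In the sufficiency, you apply the two pointwise bounds $|g(\z)|\lesssim\|g\|_{A^p_\om}(\whw(\z)(1-|\z|))^{-1/p}$ and \eqref{result5} simultaneously inside the boosting representation; since $\mu^n_\nu(\z)\asymp\widehat{\nu}(\z)(1-|\z|)^{n-1}$, the resulting inner integral is $\int_\D|B^\nu_z(\z)|(1-|\z|)^{-2}\,dA(\z)$, which is $+\infty$ for every $z$ (the integrand's integral means over circles are bounded below by $|B^\nu_z(0)|>0$, and $(1-r)^{-2}$ is not integrable). So the quantity you propose to "control" is divergent and no kernel estimate can rescue it. The paper's route is different in two essential ways: first it converts the $A^q_\nu$ quasi-norm of $h^\nu_f(g)$ (with $q<1$) into a weighted $L^1$ bound $\int_\D|g|\,|F_{f,n}|\,\widehat{\nu}(\z)(1-|\z|)^{n-1}\,dA(\z)$ via H\"older's inequality with exponents $1$ and $\frac1{1-q}$ and the auxiliary function $b(z)=\widehat{\nu}(z)^{\frac1q-1}(1-|z|)^{-\e}$ (this is the step borrowed from the proof of Theorem~\ref{theorem4}); only then does it insert \eqref{result5} and keep $|g|$ \emph{inside} the integral, reducing the matter to the fact that $(\whw(\z)(1-|\z|))^{1/p}(1-|\z|)^{-2}\,dA(\z)$ is a $1$-Carleson measure for $A^p_\om$. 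Pulling the sup of $|g|$ out, as you do, destroys exactly the cancellation that makes this measure finite on Carleson boxes.

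For the necessity you correctly diagnose that naive testing combined with the pointwise growth bound on $A^q_\nu$ yields the wrong exponent $\frac1q$ on $\widehat{\nu}(a)(1-|a|)$, but the resolution you sketch (Rademacher--Khinchine randomization over atoms) is neither carried out nor the right tool: that machinery is used in the paper to produce \emph{integral} (i.e.\ $\ell^s$-type) necessary conditions in the upper-triangular case, not a pointwise sup condition. The actual argument is a single-test-function trick: test with $g=B_z^{\mu^n_\nu}$ and evaluate $h^\nu_f(g)$ at the \emph{origin}, where $B^\nu_0$ is constant, so that $|h^\nu_f(g)(0)|\asymp|\langle F_{f,n},B_z^{\mu^n_\nu}\rangle_{A^2_{\mu^n_\nu}}|=|F_{f,n}(z)|$ by the reproducing formula \eqref{id:generalizedreproducing}, while $|h^\nu_f(g)(0)|\lesssim\|h^\nu_f(g)\|_{A^q_\nu}$ by subharmonicity at $0$ (no boundary growth factor enters). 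Combining this with $\|B_z^{\mu^n_\nu}\|_{A^p_\om}^p\asymp\whw(z)\widehat{\nu}(z)^{-p}(1-|z|)^{1-np-p}$ from \eqref{kernel} and \eqref{Ieq:newweight} gives precisely the exponent $1$ in \eqref{result5}. Without this step your necessity remains an unproven conjecture.
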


If $f\in\B$ and $A^p_\om\subset A^1_\nu$, then \eqref{result5} is satisfied by \cite[Theorem~1]{PR2015}. As in the case of Theorem~\ref{theorem4}, the condition \eqref{result5} in Theorem~\ref{theorem5} also characterizes the boundedness of the small Hankel operator in a certain three-weight case, the method of proof being the same. To be precise, under the hypotheses of Theorem~\ref{theorem5}, $h^\nu_f: A^1_{V_{\om,p}}\rightarrow A^1_{V_{\nu,q}}$ is bounded if and only if \eqref{result5} holds.

We finish the introduction by discussing briefly some natural open problems and directions for further research. Probably the most natural next question to study would be to consider the compactness of the small Hankel operator as well as the schatten class membership for the Hilbert space case, see \cite{J1988,JPR1987,zhu} and reference therein for the standard weight case with this regard. Certainly, all these questions can be considered in higher dimensions as well, in which case the reader should consult \cite{B-O-T-W,O,Pau-Zhao} for earlier studies. In addition, in this paper the symbol inducing the small Hankel operator is confined to be analytic, while the operator itself is certainly well-defined for a more general symbol. However, the method to study those operators should be different from the method we used here, and this certainly makes the problem more challenging, see \cite[Theorem~2]{PR2017}. Further, in the case of analytic symbols, the method we employed does not seem to work in certain cases if the weights involved are only assumed to belong to $\DD$. For example, because of the lack of the atomic decomposition for functions in $A^p_\om$ when $\om\in \DD$, the proof of necessity of Theorem \ref{theorem} falls apart all together.

The interest in studying the three-weight case of the small Hankel operator with weights in~$\DD$ also stems from the fact that it might possibly provide a tool to tackle a certain open question concerning integration operators. To be more concrete, a suitable weak factorization of $A^p_\om$ might be obtained in the same fashion as in \cite{Pau-Zhao} if we can depict the bounded small Hankel operator $h^\eta_f: A^p_\om\to A^q_\nu$ provided $f\in\H(\D)$ and $\om,\nu,\eta\in\DD$. Recall now that integration operator $T_g$ induced by a $g\in\H(\D)$ is defined as
	$$
	T_g(f)(z)=\int_0^z f(\z)g'(\z)\,d\z,\quad f\in\H(\D).
	$$
The open problem we are referring to here to is to characterize the boundedness of $T_g: A^p_\om\to A^q_\nu$ for $0<q<p<\infty$ and $\om,\nu\in\DD$. This question was partially answered in \cite{PelSum14} by using so-called strong factorization of $A^p_\om$. The reason why the strong factorization used in the said paper does not seem to lead to a solution of the problem is that the two factors in the factorization depend on the same weight, see \eqref{lkjhlajhsgf}.

\section{Boosting the order of the derivative}\label{Sec:boosting}

We begin with an integral representation of the function $z\mapsto\frac{d^n}{dz^n}(z^nf(z))$ which plays a crucial role in the proofs. It allows us to boost the order of the derivative of the analytic symbol $f$ appearing in the small Hankel operator $h^\nu_f$. This in turn guarantees the integrability of certain functions induced by $\omega$ and $\nu$, and therefore allows us to avoid unnecessary hypotheses on them in the statements. As usual, since the odd moments are explicit in the Maclaurin representation of the kernel function, we use the notation $\om_x=\int_0^1r^x\om(r)\,dr$ for each radial weight $\omega$ and all $x\ge0$. We also define for each $0<r<1$ the dilation $f_r$ by $f_r(z)=f(rz)$ for all $z\in\D$.

\begin{lemma}\label{nthreproducing}
For a radial weight $\nu$ and $n\in\N$, define
	\begin{equation}\label{jhgf}
	\begin{split}
	\mu^n_\nu(z)
	&=2^n\int_{|z|}^1r_1\left(\int_{r_1}^1r_2\cdots\int_{r_{n-2}}^1r_{n-1}\left(\int_{r_{n-1}}^1\frac{\nu(r)}{r^{2n-1}}\,dr\right)dr_{n-1}\cdots\,dr_2\right)dr_1\\
	&=\sum_{j=1}^n C_{j,n} |z|^{2j-2}\int_{|z|}^1\frac{\nu(t)}{t^{2j-1}}\,dt,\quad z\in\D,
	\end{split}
	\end{equation}
where the real constants $C_{j,n}$ are uniquely determined by the identity
	$$
	\sum_{j=1}^n\frac{C_{j,n}}{k+j}=\frac{2}{(k+1)\cdots(k+n)},\quad j=1,\ldots,n,\quad k\in\N.
	$$
Then
	\begin{equation}\label{id:generalizedreproducing}
	\int_\D f(\z)\overline{B_{z}^{\mu^n_\nu}(\z)}\nu(\z)\,dA(\z)=\frac{d^n}{dz^n}(z^nf(z)),\quad f\in A^1_\nu.
	\end{equation}
Moreover, if $\nu\in\DD$, then
	\begin{equation}\label{Ieq:newweight1}
  \mu^n_\nu(z)\asymp\widehat{\nu}(z)(1-|z|)^{n-1},\quad z\in\D,
	\end{equation}
and
	\begin{equation}\label{Ieq:newweight}
  \widehat{\mu^n_\nu}(z)\asymp\widehat{\nu}(z)(1-|z|)^{n},\quad z\in\D.
	\end{equation}
\end{lemma}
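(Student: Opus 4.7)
The plan is to handle the four assertions in a single cascade, where the hardest work is algebraic bookkeeping and the only analytic subtlety is a density argument. For the equivalence of the iterated-integral and finite-sum representations of $\mu^n_\nu$, I would exploit the fact that both the integrand $r_1\cdots r_{n-1}$ and the ordered simplex $\{|z|<r_1<\cdots<r_{n-1}<r\}$ are symmetric in $r_1,\dots,r_{n-1}$. Symmetrizing collapses the iterated integral to
\[
\mu^n_\nu(z)=\frac{2}{(n-1)!}\int_{|z|}^1\frac{(r^2-|z|^2)^{n-1}}{r^{2n-1}}\nu(r)\,dr,
\]
and a binomial expansion of $(r^2-|z|^2)^{n-1}$ recovers the finite-sum form with the explicit values $C_{j,n}=2(-1)^{j-1}/((j-1)!(n-j)!)$. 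These constants are matched to the moment identity in the statement via the classical partial-fraction expansion $1/\prod_{i=1}^n(k+i)=\sum_j(-1)^{j-1}/((j-1)!(n-j)!(k+j))$; uniqueness of $C_{j,n}$ follows because the associated Cauchy-type matrix is non-singular.

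For the reproducing formula \eqref{id:generalizedreproducing}, I would exploit the radial structure of the kernel. Expanding $f(\zeta)=\sum\widehat f(m)\zeta^m$ and $B_z^{\mu^n_\nu}(\zeta)=\sum_k(\bar z\zeta)^k/(2\mu^n_{\nu,2k+1})$ and using that rotational averaging kills all off-diagonal terms, the left-hand side collapses to $\sum_k\widehat f(k)(\nu_{2k+1}/\mu^n_{\nu,2k+1})z^k$. It then remains to verify $\nu_{2k+1}/\mu^n_{\nu,2k+1}=(k+1)\cdots(k+n)$, which follows by applying Fubini to the sum representation of $\mu^n_\nu$ and invoking the defining identity of the $C_{j,n}$ just established. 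Since $(n+k)!/k!$ is precisely the factor that the operator $(d/dz)^n(z^n\cdot)$ produces on each monomial $z^k$, the formula follows. To certify the identity in the full generality $f\in A^1_\nu$, I would first establish it for analytic polynomials (where everything is a finite sum), and then approximate by dilations $f_r\to f$ in $A^1_\nu$ as $r\to 1^-$, using that $B_z^{\mu^n_\nu}(\zeta)$ is bounded in $\zeta\in\D$ for each fixed $z$.

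The asymptotic \eqref{Ieq:newweight1} is then read off from the compact integral representation above. For $|z|$ bounded away from $0$, one has $r^2-|z|^2\asymp r-|z|$ and $r^{2n-1}\asymp 1$ on $[|z|,1]$, so
\[
\mu^n_\nu(z)\asymp\int_{|z|}^1(r-|z|)^{n-1}\nu(r)\,dr.
\]
The trivial bound $(r-|z|)^{n-1}\le(1-|z|)^{n-1}$ supplies $\mu^n_\nu(z)\lesssim(1-|z|)^{n-1}\widehat\nu(z)$. For the matching lower bound I would restrict the integral to $r\in[(1+|z|)/2,1]$, where $(r-|z|)^{n-1}\gtrsim(1-|z|)^{n-1}$, and invoke the doubling hypothesis $\widehat\nu((1+|z|)/2)\gtrsim\widehat\nu(|z|)$ characterizing $\nu\in\DD$. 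The region $|z|$ near the origin is handled trivially since all the quantities in play are comparable to positive constants there. Finally, \eqref{Ieq:newweight} is obtained by integrating \eqref{Ieq:newweight1}: the upper estimate uses monotonicity of $\widehat\nu$ on $[|z|,1]$, while the lower bound localizes to $[|z|,(1+|z|)/2]$ and uses the doubling property one more time.

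I expect the main obstacle to be organizing the combinatorics around the $C_{j,n}$ cleanly; once the matching with $1/\prod_i(k+i)$ is in hand, both the kernel identity and the size estimates follow with routine arguments. The only genuinely analytic point is the approximation-by-dilation justification of \eqref{id:generalizedreproducing} for $f$ merely in $A^1_\nu$, and for this the boundedness of $\zeta\mapsto B_z^{\mu^n_\nu}(\zeta)$ on $\D$ (for fixed $z\in\D$) is the key input.
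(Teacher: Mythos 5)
Your proposal is correct and follows essentially the same route as the paper: compute the odd moments $(\mu^n_\nu)_{2k+1}=\nu_{2k+1}/((k+1)\cdots(k+n))$, match them against the action of $\frac{d^n}{dz^n}(z^n\cdot)$ on monomials, pass from polynomials (or dilations $f_r$) to general $f\in A^1_\nu$, and read off the size estimates from the iterated-integral form together with the doubling property. Your symmetrization of the ordered simplex, yielding the closed form $\mu^n_\nu(z)=\frac{2}{(n-1)!}\int_{|z|}^1(r^2-|z|^2)^{n-1}r^{1-2n}\nu(r)\,dr$ and the explicit constants $C_{j,n}$, is simply the ``routine calculation'' the paper leaves implicit, carried out in full.
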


\begin{proof}
First observe that the second identity in \eqref{jhgf} is verified by a routine calculation. Next, let $n\in\N$, and consider $f\in\H(\D)$ with the Maclaurin series $f(z)=\sum_{k=0}^\infty \widehat{f}(k)z^k$. Then $\frac{d^n}{dz^n}\left(z^n f(z)\right)=\sum_{k=0}^\infty(k+1)\cdots(k+n)\widehat{f}(k)z^k$ for all $z\in\D$. Further, a direct calculation shows that $\mu^n_\nu$ satisfies
	\begin{equation}\label{id: mu}
  (\mu^n_\nu)_{2k+1}=\frac{\nu_{2k+1}}{(k+1)\cdots(k+n)},\quad k\in\N\cup\{0\}.
	\end{equation}
Therefore Fubini's theorem yields
	\begin{align*}
  \int_\D f_r(\z)\overline{B_{z}^{\mu^n_\nu}(\z)}\nu(\z)\,dA(\z)
	&=\int_\D\left(\sum_{k=0}^\infty\widehat{f_r}(k)\z^k\right)
	\left(\sum_{j=0}^{\infty}\frac{(z\overline{\z})^j}{2{(\mu^n_\nu)_{2j+1}}}\right)\nu(\z)\,dA(\z)\\
  &=\sum_{k=0}^\infty\frac{\nu_{2k+1}}{(\mu^n_\nu)_{2k+1}}\widehat{f_r}(k)z^k
	=\frac{d^n}{dz^n}\left(z^n f_r(z)\right),\quad 0<r<1.
  \end{align*}
By letting $r\to1^-$, we obtain \eqref{id:generalizedreproducing}. The asymptotic equality \eqref{Ieq:newweight1} follows by the first identity in \eqref{jhgf}, and \eqref{Ieq:newweight} is an immediate consequence of \eqref{Ieq:newweight1} and \cite[Lemma~2.1]{PelSum14}.
\end{proof}

Lemma~\ref{nthreproducing} yields a novel representation of the small Hankel operator $h^\nu_f$ for every radial weight $\nu$. Namely, if the function $z\mapsto f(z)\|B_z^{\mu^n_\nu}\|_{A^1_{\mu^n_\nu}}$ belongs to $L^1_\nu$, then the reproducing formula for functions in $A^1_{\mu^n_\nu}$ together with Fubini's theorem and \eqref{id:generalizedreproducing} yields
	\begin{equation}\label{Id: newrepresentation}
  \begin{split}
  h_f^\nu(g)(z)
  &=\int_\D f(\z)\overline{\int_\D g(u)B_z^{\nu}(u)\overline{B_\z^{\mu^n_{\nu}}}(u)\mu^n_{\nu}(u)\,dA(u)}\nu(\z)\,dA(\z)\\
  &=\int_\D\overline{g(u)B_z^{\nu}(u)}\left(\int_\D f(\z)\overline{B^{\mu^n_{\nu}}_u(\z)}\nu(\z)\,dA(\z)\right)\mu^n_{\nu}(u)\,dA(u)\\
  &=\int_{\D}\overline{g(u)B_z^{\nu}(u)}\left(\frac{d^n}{du^n}\left(u^n f(u)\right)\right)\mu^n_{\nu}(u)\,dA(u),\quad g\in H^\infty.
  \end{split}
	\end{equation}

The representation \eqref{Id: newrepresentation} of the small Hankel operator $h^\nu_f$ involves the quantity $\frac{d^n}{d(\cdot)^n}\left((\cdot)^n f\right)$. In order to pass from this function to $f^{(n)}$ we will use the following two lemmas. The first one compares in a natural way the growth of the afore-mentioned functions in the weighted Bergman space $A^p_W$.

\begin{lemma}\label{Lemma:derivative}
Let $0<p<\infty$ and $n\in\N$, and let $W\not\equiv0$ be a radial weight. Then the function $z\mapsto F_{f,n}(z)=\frac{d^n}{dz^n}(z^nf(z))$ satisfies
	$$
	\|F_{f,n}\|_{A^p_W}\asymp\|f^{(n)}\|_{A^p_W}+\sum_{j=0}^{n-1}|f^{(j)}(0)|,\quad f\in\H(\D).
	$$
\end{lemma}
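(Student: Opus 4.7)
The plan is to establish the asymptotic equality by proving the upper and lower estimates separately. For the upper bound, I would start from the Leibniz expansion
\[
F_{f,n}(z)=\sum_{j=0}^{n}\binom{n}{j}\frac{n!}{j!}\,z^{j}f^{(j)}(z),
\]
and combine the quasi-triangle inequality with $|z^{j}f^{(j)}(z)|\le|f^{(j)}(z)|$ to reduce the task to bounding $\|f^{(j)}\|_{A^{p}_{W}}$ for $j<n$ in terms of $\|f^{(n)}\|_{A^{p}_{W}}$ and $\sum_{i<n}|f^{(i)}(0)|$. The key auxiliary estimate is the Hardy-type inequality $\|g\|_{A^{p}_{W}}\lesssim\|g'\|_{A^{p}_{W}}$ for every $g\in\H(\D)$ with $g(0)=0$ and every $0<p<\infty$; this would follow from the pointwise bound $|g(re^{i\theta})|\le\int_{0}^{r}|g'(se^{i\theta})|\,ds$, which yields $M_{p}(r,g)\lesssim r\,M_{p}(r,g')$ via Minkowski's integral inequality when $p\ge1$ and via the Hardy-Littlewood radial maximal inequality for analytic functions when $0<p<1$, combined with the monotonicity of $r\mapsto M_{p}(r,g')$. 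Iterating this estimate to $f^{(j)}-f^{(j)}(0)$ for $j=n-1,n-2,\dots,0$ completes the upper bound.

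For the lower bound, the contribution $\sum_{j<n}|f^{(j)}(0)|$ is controlled by the identity $\widehat{F_{f,n}}(j)=(j+1)(j+2)\cdots(j+n)\widehat{f}(j)$ together with the fact that each Maclaurin coefficient is a continuous linear functional on $A^{p}_{W}$ (a consequence of the subharmonicity of $|F_{f,n}|^{p}$ and the positivity of $W$ on some circle). The substantial step is to prove $\|f^{(n)}\|_{A^{p}_{W}}\lesssim\|F_{f,n}\|_{A^{p}_{W}}$, and my plan is to invert the boost operator $L_{n}:f\mapsto F_{f,n}$ explicitly. The Beta-function identity $k!/(k+n)!=\frac{1}{(n-1)!}\int_{0}^{1}t^{k}(1-t)^{n-1}\,dt$ produces
\[
L_{n}^{-1}h(z)=\frac{1}{(n-1)!}\int_{0}^{1}(1-t)^{n-1}h(tz)\,dt,
\]
and hence $f^{(n)}(z)=\frac{1}{(n-1)!}\int_{0}^{1}(1-t)^{n-1}t^{n}F_{f,n}^{(n)}(tz)\,dt$. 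Writing $z^{n}F_{f,n}^{(n)}(tz)=\frac{d^{n}}{dt^{n}}[F_{f,n}(tz)]$ and integrating by parts $n$ times in $t$ against the weight $u(t):=(1-t)^{n-1}t^{n}$, all boundary contributions vanish except the single term produced by $u^{(n-1)}(1)\ne0$, since $u$ has a zero of order $n$ at $t=0$ and of order $n-1$ at $t=1$. The upshot is an identity
\[
z^{n}f^{(n)}(z)=c_{n}F_{f,n}(z)+\int_{0}^{1}p_{n}(t)\,F_{f,n}(tz)\,dt
\]
with $c_{n}\in\R$ and a polynomial $p_{n}$ depending only on $n$. Taking $A^{p}_{W}$ norms and using $\|F_{f,n}(t\,\cdot)\|_{L^{p}_{W}}\le\|F_{f,n}\|_{A^{p}_{W}}$ for $t\in[0,1]$ (a consequence of the monotonicity of $r\mapsto M_{p}(r,F_{f,n})$) together with Minkowski (for $p\ge1$) or the radial Hardy-Littlewood maximal inequality (for $p<1$) to control the integral term yields $\|z^{n}f^{(n)}\|_{A^{p}_{W}}\lesssim\|F_{f,n}\|_{A^{p}_{W}}$.

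The final step is to remove the $z^{n}$ factor. The multiplication operator $M_{z^{n}}:A^{p}_{W}\to A^{p}_{W}$, $h\mapsto z^{n}h$, is bounded and injective, and its range $\{g\in A^{p}_{W}:\widehat{g}(0)=\cdots=\widehat{g}(n-1)=0\}$ is closed because each of the first $n$ Maclaurin coefficients is a continuous linear functional on $A^{p}_{W}$. The open mapping theorem for F-spaces, which covers the quasi-Banach range $0<p<1$ as well, then yields a constant $C=C(W,n,p)$ with $\|h\|_{A^{p}_{W}}\le C\|z^{n}h\|_{A^{p}_{W}}$ for every $h\in A^{p}_{W}$; applied to $h=f^{(n)}$, this finishes the lower bound. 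I anticipate this last step to be the main obstacle: the open mapping argument is clean but non-constructive in $W$. If an explicit constant is desired, one can alternatively realise the inverse of $M_{z^{n}}$ on the image of the boost as a composition of $n$ operators of the form $h\mapsto c\,h-n\int_{0}^{1}t^{c-1}h(tz)\,dt$ arising from the factorisation $\rho_{k}=\prod_{j=1}^{n}(k+j)/(k+n+j)$ of the associated multiplier; each such operator is bounded on $A^{p}_{W}$ for every radial weight $W$ by the same Minkowski/maximal-function argument.
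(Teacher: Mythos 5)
Your proof is correct, and while the upper bound is essentially the paper's argument, the lower bound follows a genuinely different route. For the upper bound both you and the paper expand $F_{f,n}$ by Leibniz' rule and control $\|f^{(j)}\|_{A^p_W}$ for $j<n$ by integrating the derivative along radii and invoking a maximal-function inequality to cover $0<p<1$; the paper uses the non-tangential maximal function and the norm equivalence $\|N(f)\|_{A^p_W}\asymp\|f\|_{A^p_W}$, you use the radial maximal function, and both are valid for every radial weight. For the lower bound the paper argues pointwise, claiming $|z^nf^{(n)}(z)|\lesssim N(F_{f,n})(z)$ for $|z|\ge\tfrac12$ via ``straightforward calculations'' and then appealing again to the maximal-function norm equivalence; you instead invert the coefficient multiplier $f\mapsto F_{f,n}$ explicitly through the Beta-function representation and integrate by parts to obtain the exact identity
\begin{equation*}
z^{n}f^{(n)}(z)=F_{f,n}(z)+\frac{(-1)^{n}}{(n-1)!}\int_{0}^{1}u^{(n)}(t)\,F_{f,n}(tz)\,dt,\qquad u(t)=(1-t)^{n-1}t^{n},
\end{equation*}
since $u^{(n-1)}(1)=(-1)^{n-1}(n-1)!$ is the only surviving boundary term. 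This buys transparency where the paper is terse: the estimate $\|z^nf^{(n)}\|_{A^p_W}\lesssim\|F_{f,n}\|_{A^p_W}$ then drops out of the dilation inequality $M_p(tr,F)\le M_p(r,F)$ together with Minkowski's inequality for $p\ge1$ or the radial maximal function for $p<1$, and your treatment of $\sum_{j<n}|f^{(j)}(0)|$ via the coefficient identity $\widehat{F_{f,n}}(j)=(j+1)\cdots(j+n)\widehat{f}(j)$ is cleaner than the paper's. Two caveats, neither fatal: the open mapping theorem in your last step requires $A^p_W$ to be complete, which holds under the paper's standing convention $\widehat{W}(r)>0$ for all $r$ (and is implicitly needed by the paper's own proof as well); and under that same convention the step is elementary and quantitative, since the monotonicity of $r\mapsto M_p(r,h)$ gives $\int_{|z|<1/2}|h|^pW\,dA\le M_p(\tfrac12,h)^pW(\D)\lesssim_W\int_{1/2\le|z|<1}|z^nh|^pW\,dA$, which is essentially how the paper removes the factor $z^n$. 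The closing suggestion about factoring the inverse of $M_{z^n}$ through further multiplier operators is the only murky point of the write-up, but it is offered as an optional alternative and is not needed.
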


\begin{proof}
The non-tangential approach region with vertex at $\z\in\D\setminus\{0\}$ is defined by
    \begin{equation*}
    \Gamma(\z)=\left\{z\in \D:\,|\arg\zeta-\arg
    z|<\frac12\left(1-\left|\frac{z}{\zeta}\right|\right)\right\},
    \end{equation*}
and the non-tangential maximal function is $N(f)(\z)=\sup_{z\in\Gamma(\z)}|f(z)|$. It is known that each $0<p<\infty$ and each radial weight $\om$ satisfy
	\begin{equation}\label{eq:non-tangential norm}
    \|N(f)\|_{A^p_\om}\asymp\|f\|_{A^p_\om},\quad f\in\H(\D),
    \end{equation}
by \cite[Lemma~4.4]{PR}.

Let $n\in\N$ be fixed. By the Leibniz' rule
	\begin{equation}\label{Eq:Leibniz}
	\begin{split}
	\left|F_{f,n}(z)\right|
	=\left|\frac{d^n}{dz^n}(z^nf(z))\right|
	=\left|\sum_{j=0}^n\binom{n}{j}\left(\frac{d^{n-j}}{dz^{n-j}}z^n\right)f^{(j)}(z)\right|
	\lesssim\sum_{j=0}^n\left|f^{(j)}(z)\right|,\quad z\in\D.
	\end{split}
	\end{equation}
For each $g\in\H(\D)$ we have
	$$
	|g(z)|=\left|\int_0^zg'(\zeta)\,d\zeta+g(0)\right|\le\max_{0\le r\le|z|}\left|g'\left(r\frac{z}{|z|}\right)\right|+|g(0)|
	\le N(g')(z)+|g(0)|,\quad z\in\D\setminus\{0\},
	$$
and hence
	\begin{equation}\label{Eq:Leibnizx}
	\left|f^{(j)}(z)\right|\le N\left(f^{(n)}\right)(z)+\sum_{k=j}^{n-1}|f^{(k)}(0)|,\quad j=0,\ldots, n-1.
	\end{equation}
By combining \eqref{Eq:Leibniz} and \eqref{Eq:Leibnizx} we deduce
	$$
	\left|F_{f,n}(z)\right|\lesssim N\left(f^{(n)}\right)(z)+\sum_{j=0}^{n-1}|f^{(j)}(0)|,\quad z\in\D\setminus\{0\},
	$$
from which \eqref{eq:non-tangential norm} yields
	$$
	\|F_{f,n}\|_{A^p_W}\lesssim\|f^{(n)}\|_{A^p_W}+\sum_{j=0}^{n-1}|f^{(j)}(0)|,\quad f\in\H(\D).
	$$

For the converse inequality, we observe that straightforward calculations based on \eqref{Eq:Leibnizx} give
	\begin{equation*}
	\begin{split}
	|f^{(n)}(z)|
	\lesssim|z^nf^{(n)}(z)|
	\lesssim|F_{f,n}(z)|+\sum_{j=0}^{n-1}|z^jf^{(j)}(z)|
	\lesssim N\left(F_{f,n}\right)(z),\quad |z|\ge\frac12.
	\end{split}
	\end{equation*}
This estimate together with \eqref{eq:non-tangential norm} gives
	$$
	\|f^{(n)}\|_{A^p_W}
	\lesssim\left(\int_{\D\setminus D(0,\frac12)}|f^{(n)}(z)|^pW(z)\,dA(z)\right)^\frac1p
	\lesssim\|F_{f,n}\|_{A^p_W}.
	$$
Further, since $W$ is a radial weight, we have
	$$
	|f^{(j)}(0)|^p\lesssim\|f^{(j)}\|_{A^p_W}^p
	\lesssim\int_{\D\setminus D(0,\frac12)}|f^{(j)}(z)|^pW(z)\,dA(z)
	\lesssim\int_{\D\setminus D(0,\frac12)}|z^jf^{(j)}(z)|^pW(z)\,dA(z)
	$$
for all $f\in\H(\D)$. Therefore the argument used above guarantees
	$$
	\sum_{j=0}^{n-1}|f^{(j)}(0)|\lesssim\|F_{f,n}\|_{A^p_W}.
	$$
This completes the proof of the lemma.
\end{proof}

The next lemma compares the growth of the maximum muduli of the functions $F_{f,n}$ and $f^{(n)}$.

\begin{lemma}\label{Lemma:derivative2}
Let $n\in\N$, and let $W:\D\to[0,\infty)$ be a bounded function such that $W\not\equiv0$ and $W(z)=W(|z|)$ for all $z\in\D$. Then the function $z\mapsto F_{f,n}(z)=\frac{d^n}{dz^n}(z^nf(z))$ satisfies
	$$
	\sup_{z\in\D}\left(|F_{f,n}(z)|W(z)\right)\asymp\sup_{z\in\D}\left(|f^{(n)}(z)|W(z)\right)+\sum_{j=0}^{n-1}|f^{(j)}(0)|,\quad f\in\H(\D).
	$$
\end{lemma}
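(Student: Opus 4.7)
The plan is to follow the strategy of Lemma~\ref{Lemma:derivative}, with the $L^p$-norm replaced by the weighted sup-norm. The role played there by the non-tangential equivalence \eqref{eq:non-tangential norm} will be taken here by the elementary identity
$$
\sup_{z\in\D}|g(z)|W(z)=\sup_{r\in[0,1)}\max_{|\zeta|=r}|g(\zeta)|\,W(r),\quad g\in\H(\D),
$$
which rests on the radial symmetry of $W$ and the subharmonicity of $|g|$. Throughout, the constants in $\asymp$ will be allowed to depend on $n$ and $W$, which is harmless since both are fixed.

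For the direction $\sup|F_{f,n}|W\lesssim\sup|f^{(n)}|W+\sum_{j=0}^{n-1}|f^{(j)}(0)|$ I would first invoke the Leibniz bound $|F_{f,n}(z)|\le C_n\sum_{j=0}^n|f^{(j)}(z)|$ and then control each lower-order term by the Taylor formula with integral remainder
$$
f^{(j)}(z)=\sum_{k=j}^{n-1}\frac{f^{(k)}(0)}{(k-j)!}z^{k-j}+\frac{z^{n-j}}{(n-1-j)!}\int_0^1(1-t)^{n-1-j}f^{(n)}(tz)\,dt,\quad j<n.
$$
Multiplying by $W(z)=W(|z|)$ and applying the maximum modulus principle along the circle $|\zeta|=|z|$ gives $|f^{(n)}(tz)|W(|z|)\le\sup|f^{(n)}|W$ for every $t\in[0,1]$, and the required estimate follows by summation.

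The converse direction needs a new ingredient. Reading off Maclaurin coefficients and using the beta integral one obtains the inversion formula $f(z)=\frac{1}{(n-1)!}\int_0^1(1-t)^{n-1}F_{f,n}(tz)\,dt$. I would then differentiate $j$ times in $z$, substitute $F_{f,n}^{(j)}(tz)=z^{-j}\partial_t^j F_{f,n}(tz)$, and integrate by parts $j$ times in $t$ to produce
$$
z^jf^{(j)}(z)=\frac{(-1)^j}{(n-1)!}\int_0^1\partial_t^j\bigl(t^j(1-t)^{n-1}\bigr)F_{f,n}(tz)\,dt,\quad j<n,
$$
with all boundary terms vanishing because $t^j(1-t)^{n-1}$ has zeros of orders $j$ at $t=0$ and $n-1$ at $t=1$; the hypothesis $j<n$ is precisely what makes the $j-1$ boundary contributions drop out. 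Since the kernel $\partial_t^j(t^j(1-t)^{n-1})$ is a bounded polynomial on $[0,1]$, this yields $|z^jf^{(j)}(z)|\lesssim M(|z|,F_{f,n}):=\max_{|\zeta|=|z|}|F_{f,n}(\zeta)|$ for every $j<n$. Plugging this into the Leibniz identity $z^nf^{(n)}(z)=F_{f,n}(z)-\sum_{i<n}c_{i,n}z^if^{(i)}(z)$ extends the same bound to $|z|^n|f^{(n)}(z)|$, and multiplying by $W(|z|)$ together with the maximum principle applied to $F_{f,n}$ produces
$$
\sup_{z\in\D}|z|^n|f^{(n)}(z)|W(z)\lesssim\sup|F_{f,n}|W.
$$

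To finish I would pass from $\sup|z|^n|f^{(n)}|W$ to $\sup|f^{(n)}|W$ by fixing any $r_\ast\in(0,1)$ with $W(r_\ast)>0$, which exists because $W$ is radial and not identically zero: for $|z|\ge r_\ast$ one simply multiplies by $r_\ast^{-n}$, while for $|z|\le r_\ast$ the maximum principle on the circle $|\zeta|=r_\ast$ does the job, at the cost of a constant depending only on $W$ and $n$. The initial values $|f^{(k)}(0)|$, $k<n$, are then estimated by Cauchy's inequality for $F_{f,n}$ on $|\zeta|=r_\ast$ combined with the coefficient identity $f^{(k)}(0)=\frac{k!}{(k+n)!}F_{f,n}^{(k)}(0)$. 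The main technical obstacle is the careful bookkeeping of the $j$-fold integration by parts at both endpoints; this is exactly what allows representing $z^jf^{(j)}$ purely in terms of $F_{f,n}$ and thus sidesteps the otherwise circular estimate in which $\sup|f^{(n)}|W$ would appear on both sides of the desired inequality.
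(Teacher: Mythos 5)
Your proof is correct. The upper bound for $\sup|F_{f,n}|W$ is essentially the paper's argument in different clothing: the paper iterates the elementary inequality $M_\infty(r,g)\le M_\infty(r,g')+|g(0)|$, while you use Taylor's formula with integral remainder for $f^{(j)}$ — the same estimate, packaged differently. The converse is where you genuinely diverge. The paper again invokes that maximum-modulus inequality, restricted to $\tfrac12\le r<1$, to absorb the lower-order terms $M_\infty\bigl(r,(\cdot)^jf^{(j)}\bigr)$ into $M_\infty(r,F_{f,n})$, leaving the details rather terse; you instead derive the exact inversion formula $f(z)=\frac{1}{(n-1)!}\int_0^1(1-t)^{n-1}F_{f,n}(tz)\,dt$ (which checks out: the beta integral $\int_0^1t^k(1-t)^{n-1}\,dt=\frac{k!(n-1)!}{(k+n)!}$ exactly cancels the factor $(k+1)\cdots(k+n)$ in the Maclaurin coefficients of $F_{f,n}$), differentiate, and integrate by parts $j$ times, the boundary terms vanishing precisely because $j<n$. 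This yields the clean pointwise bound $|z^jf^{(j)}(z)|\lesssim\max_{|\zeta|=|z|}|F_{f,n}(\zeta)|$ on all of $\D$, from which the case $j=n$ follows by the Leibniz identity and the initial values by Cauchy's estimates together with $f^{(k)}(0)=\frac{k!}{(k+n)!}F_{f,n}^{(k)}(0)$. Your route is more explicit and quantitative, and it is in fact the same boosting/reproducing idea that drives Lemma~\ref{nthreproducing}, specialized to the circle. One shared caveat: both your proof and the paper's implicitly require $W(r_*)>0$ for some $r_*\in(0,1)$, which the stated hypothesis $W\not\equiv0$ does not literally guarantee (for $W=\chi_{\{0\}}$ and $f(z)=z^n$ the asserted equivalence actually fails, since $F_{f,n}(0)=0$ while $f^{(n)}(0)=n!$); this is a defect of the lemma's formulation rather than of either argument, and is harmless for the strictly positive weights to which the lemma is applied.
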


\begin{proof}
Let $n\in\N$ be fixed. For each $g\in\H(\D)$ we have
	\begin{equation}\label{lslsododo}
	M_\infty(r,g)
	=\max_{|z|=r}\left|\int_0^zg'(\zeta)\,d\zeta+g(0)\right|
	\le M_\infty(r,g')+|g(0)|,
	\end{equation}
and hence \eqref{Eq:Leibniz} yields
	\begin{equation*}
	\begin{split}
	M_\infty\left(r,F_{f,n}\right)
	\lesssim M_\infty\left(r,f^{(n)}\right)+\sum_{j=0}^{n-1}|f^{(j)}(0)|,\quad 0<r<1.
	\end{split}
	\end{equation*}
It follows that
	$$
	\sup_{z\in\D}\left(|F_{f,n}(z)|W(z)\right)
	\lesssim\sup_{z\in\D}\left(|f^{(n)}(z)|W(z)\right)+\sum_{j=0}^{n-1}|f^{(j)}(0)|,\quad f\in\H(\D).
	$$
	
For the converse inequality, we observe that straightforward calculations based on \eqref{lslsododo} give
	\begin{equation*}
	\begin{split}
	M_\infty(r,f^{(n)})
	&\lesssim M_\infty(r,(\cdot)^nf^{(n)})
	\lesssim M_\infty\left(r,F_{f,n}\right)+\sum_{j=0}^{n-1}M_\infty\left(r,(\cdot)^jf^{(j)}\right)
	\lesssim M_\infty\left(r,F_{f,n}\right)
	\end{split}
	\end{equation*}
for all $\frac12\le r<1$. This estimate yields
	$$
	\sup_{z\in\D}\left(|f^{(n)}(z)|W(z)\right)\lesssim\sup_{z\in\D}\left(|F_{f,n}(z)|W(z)\right),\quad f\in\H(\D).
	$$
Further, since
	$$
	|f^{(j)}(0)|\le M_\infty\left(r,f^{(j)}\right)
	\lesssim M_\infty\left(r,(\cdot)^jf^{(j)}\right),\quad \frac12\le r<1,\quad f\in\H(\D),
	$$
the argument used above guarantees
	$$
	\sum_{j=0}^{n-1}|f^{(j)}(0)|\lesssim\sup_{z\in\D}\left(|F_{f,n}(z)|W(z)\right),\quad f\in\H(\D).
	$$
This completes the proof of the lemma.
\end{proof}

For $x\in\mathbb{R}$ and a function $\om$, defined on $\D$, write $\om_{[x]}(z)=\om(z)(1-|z|)^x$ for all $z\in\D$. Observe that, for each $0<p<\infty$ and $\om\in\DDD$, we have
	\begin{equation}\label{littlewoodpaley}
	\begin{split}
	\int_\D|f(z)|^p\om(z)\,dA(z)
	&\asymp\int_\D|f'(z)|^p\om_{[p]}(z)\,dA(z)+|f(0)|^p,\quad f\in\H(\D),
	\end{split}
	\end{equation}
by \cite[Theorem~5]{PelRat2020}.

\begin{lemma}\label{lemma:bloch-measure}
Let $0<p<\infty$, $n\in\N$ and $\om\in\DDD$. Then the function $z\mapsto F_{f,n}(z)=\frac{d^n}{dz^n}(z^nf(z))$ satisfies
	\begin{equation*}
	\begin{split}
	\sup_{h\in\B\setminus\{0\}}\frac{\int_\D |h(z)|^p|F_{f,n}(z)|^p\om(z)\,dA(z)}{\|h\|^p_\B}
	\quad\asymp\sup_{h\in\B\setminus\{0\}}\frac{\int_\D |h(z)|^p|f^{(n)}(z)|^p\om(z)\,dA(z)}{\|h\|^p_\B}+\sum_{j=0}^{n-1}|f^{(j)}(0)|^p
	\end{split}
	\end{equation*}
for all $f\in\H(\D)$.
\end{lemma}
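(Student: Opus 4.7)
The plan is to mirror the argument used in the proof of Lemma~\ref{Lemma:derivative}, with the key adaptation that the non-tangential maximal identity \eqref{eq:non-tangential norm}, available there for radial weights, must be replaced by a variant valid for the non-radial weight $|h|^p\om\,dA$ arising from each Bloch function $h$. First I would apply Leibniz to get $|F_{f,n}(z)|\lesssim\sum_{k=0}^n|z|^k|f^{(k)}(z)|$, and then iterate the elementary radial inequality $|g(z)|\le|g(0)|+M(g')(z)$, where $M(g)(z)=\max_{0\le r\le|z|}|g(rz/|z|)|$ denotes the radial maximal function, along the segment $[0,z]$ to obtain the pointwise bound
$$|F_{f,n}(z)|\lesssim M(f^{(n)})(z)+\sum_{j=0}^{n-1}|f^{(j)}(0)|,\quad z\in\D.$$
Raising to the $p$-th power, multiplying by $|h(z)|^p\om(z)$ and integrating over $\D$, the constant-term contribution is controlled by $\|h\|_\B^p\sum_{j=0}^{n-1}|f^{(j)}(0)|^p$ via the embedding $\|h\|_{A^p_\om}\lesssim\|h\|_\B$, valid for every $\om\in\DDD$ because $|h(z)|\lesssim\|h\|_\B\log\frac{e}{1-|z|}$ and the logarithm is $p$-integrable against $\om$.

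The crux is then the weighted maximal-type estimate
$$\int_\D|h(z)|^pM(g)(z)^p\om(z)\,dA(z)\lesssim\int_\D|h(z)|^p|g(z)|^p\om(z)\,dA(z)+\|h\|_\B^p|g(0)|^p,$$
valid for every $h\in\B$, every $g\in\H(\D)$ and every $\om\in\DDD$, applied with $g=f^{(n)}$. To prove it I would exploit that $hg\in\H(\D)$ makes $|hg|^p$ subharmonic, so that sub-mean-value inequalities on pseudohyperbolic discs $\Delta(w,\delta)$ give $|h(w)g(w)|^p\lesssim(1-|w|)^{-2}\int_{\Delta(w,\delta)}|hg|^p\,dA$. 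Combined with the Bloch-Lipschitz estimate $|h(z)-h(w)|\lesssim\|h\|_\B$ for $w$ in a fixed-size pseudohyperbolic neighbourhood of $z$ and the doubling of $\om$, this lets one compare local averages of $|h|^p|g|^p\om$ and $|hg|^p\om$ up to a multiplicative constant and an additive $\|h\|_\B^p$-correction. Taking the radial maximum along $[0,z]$ and swapping order of integration by Fubini then delivers the required inequality, the logarithmic drift of $h$ along $[0,z]$ being exactly what forces the additive error.

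The reverse direction $\gtrsim$ is obtained symmetrically: on $\{|z|\ge\tfrac12\}$ the chain $|f^{(n)}(z)|\lesssim|z^nf^{(n)}(z)|\lesssim|F_{f,n}(z)|+\sum_{j=0}^{n-1}|z^jf^{(j)}(z)|\lesssim M(F_{f,n})(z)$ already used in Lemma~\ref{Lemma:derivative} is available, while on $D(0,\tfrac12)$ the maximum modulus principle reduces $|f^{(n)}|$ to its values on $\{|z|=\tfrac12\}$, which are in turn controlled by $M(F_{f,n})$. Applying the same maximal estimate with $g=F_{f,n}$ yields the reverse bound. Each constant $|f^{(j)}(0)|^p$ in the asymptotic is then controlled independently by testing against the constant function $h\equiv1$ in the supremum on the left, noting that $\|1\|_\B=1$ and $\int_\D\om\,dA<\infty$.

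The main obstacle is precisely the weighted maximal estimate above: the measure $|h|^p\om\,dA$ is not radial and $|h|$ can drift logarithmically along the segment $[0,z]$, so \eqref{eq:non-tangential norm} is not directly applicable. The saving is that the oscillation of any Bloch function on a fixed-size pseudohyperbolic disc is $O(\|h\|_\B)$, which is exactly the budget that the subharmonic averaging of $|hg|^p$ can afford; this produces the additive $\|h\|_\B^p|g(0)|^p$ error, which combines harmlessly with the Taylor correction $\sum_{j=0}^{n-1}|f^{(j)}(0)|^p$ appearing on the right-hand side of the lemma.
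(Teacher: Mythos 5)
Your overall skeleton coincides with the paper's: reduce via Leibniz to controlling $\int_\D|h|^p|f^{(j)}|^p\om\,dA$ for $j<n$, absorb the Taylor data at the origin, and recover $\sum_j|f^{(j)}(0)|^p$ by testing with $h\equiv1$ and Lemma~\ref{Lemma:derivative}. The difference, and the problem, lies in your central step: the claimed weighted maximal inequality
$\int_\D|h|^pM(g)^p\om\,dA\lesssim\int_\D|h|^p|g|^p\om\,dA+\|h\|_\B^p|g(0)|^p$ is not proved by the sketch you give, and it is precisely the point where the whole argument lives. The subharmonicity of $|hg|^p$ plus the oscillation bound $|h(z)-h(w)|\lesssim\|h\|_\B$ only compares $h$ at points within a \emph{fixed} pseudohyperbolic distance, whereas for the radial maximal function the point $w\in[0,z]$ where $\max_{[0,z]}|g|$ is attained may be at arbitrarily large pseudohyperbolic distance from $z$; there the oscillation of $h$ is of size $\|h\|_\B\log\frac{1-|w|}{1-|z|}$, which is unbounded, and it is not explained how this is absorbed into the single additive constant $\|h\|_\B^p|g(0)|^p$. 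Moreover, the local comparison you invoke would require a lower bound of the form $\int_{\Delta(w,\delta)}|hg|^p\om\,dA\gtrsim\|h\|_\B^p|g(w)|^p\om(\Delta(w,\delta))$, which fails because $|h|$ can be arbitrarily small on $\Delta(w,\delta)$ relative to $\|h\|_\B$. So as written the proposal has a genuine gap at its crux; whether your maximal estimate is even true would itself require a careful argument in the spirit of \cite[Lemma~4.4]{PR} adapted to the non-radial measure $|h|^p\om\,dA$, which you have not supplied.

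The paper avoids the maximal function altogether. In both directions it applies the Littlewood--Paley identity \eqref{littlewoodpaley} (valid since $\om\in\DDD$) to the \emph{products} $hf^{(j)}$, so that
$$
\int_\D|hf^{(j)}|^p\om\,dA\lesssim\int_\D|h'f^{(j)}|^p\om_{[p]}\,dA+\int_\D|hf^{(j+1)}|^p\om_{[p]}\,dA+|h(0)f^{(j)}(0)|^p,
$$
and then uses only the elementary facts $|h'(z)|(1-|z|)\le\|h\|_\B$ and $|h(z)|\lesssim\|h\|_\B\log\frac{e}{1-|z|}$ together with a second application of \eqref{littlewoodpaley} to climb from $f^{(j)}$ to $f^{(j+1)}$; iterating $n$ times lands on $\int_\D|h|^p|f^{(n)}|^p\om\,dA$ (respectively on $\|f^{(n)}\|_{A^p_\om}^p$, handled by Lemma~\ref{Lemma:derivative}). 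If you want to salvage your approach, I would recommend replacing the maximal-function step by this Littlewood--Paley iteration, which handles the non-radial factor $|h|^p$ exactly because differentiation of the product produces $h'$, whose growth a Bloch function controls pointwise.
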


\begin{proof}
Assume first that
	$$
	C_f=\sup_{h\in\B\setminus\{0\}}\frac{\int_\D |h(z)|^p|f^{(n)}(z)|^p\om(z)\,dA(z)}{\|h\|^p_\B}<\infty.
	$$
The estimate \eqref{Eq:Leibniz} yields
	\begin{equation}\label{eqx}
	\int_\D|h(z)|^p|F_{f,n}(z)|^p\om(z)\,dA(z)
	\lesssim\sum_{j=0}^{n}\int_\D|h(z)|^p|f^{(j)}(z)|^p\om(z)\,dA(z),\quad f\in\H(\D).
	\end{equation}
Further, by \eqref{littlewoodpaley}, we deduce
	\begin{equation*}
	\begin{split}
	\int_{\D} |h(z)|^p|f^{(j)}(z)|^p\om(z)\,dA(z)
	&\lesssim\int_{\D}\left|h'(z)f^{(j)}(z)\right|^p\om_{[p]}(z)\,dA(z)\\
	&\quad+\int_{\D}\left|h(z)f^{(j+1)}(z)\right|^p\om_{[p]}(z)\,dA(z)+|h(0)|^p|f^{(j)}(0)|^p\\
	&\lesssim\|h\|_\B^p\int_{\D}\left|f^{(j)}(z)\right|^p\om(z)\,dA(z)\\
	&\quad+\int_{\D}\left|h(z)f^{(j+1)}(z)\right|^p{\om}(z)\,dA(z)+|h(0)|^p|f^{(j)}(0)|^p\\
	&\lesssim\|h\|_\B^p\int_\D|f^{(j+1)}(z)|^p{\om}(z)\,dA(z)+\|h\|_\B^p|f^{(j)}(0)|^p\\
	&\quad+\int_{\D}\left|h(z)\right|^p|f^{(j+1)}(z)|^p{\om}(z)\,dA(z)\\
	&\lesssim\|h\|_\B^p\int_\D|f^{(n)}(z)|^p {\om}(z)\,dA(z)+\|h\|_\B^p\sum_{m=j}^{n-1}|f^{(m)}(0)|^p\\
	&\quad+\int_{\D}|h(z)|^p\left|f^{(j+1)}(z)\right|^p{\om}(z)\,dA(z)\\
	&\lesssim\left(C_f+\sum_{m=j}^{n-1}|f^{(m)}(0)|^p\right)\|h\|^p_\B,\quad h\in\B,\quad j=0,\ldots,n-1.
	\end{split}
	\end{equation*}
This together with \eqref{eqx} yields
	$$
	\int_\D |h(z)|^p|F_{f,n}(z)|^p\om(z)\,dA(z)
	\lesssim \|h\|_\B^p\left(C_f+\sum_{j=0}^{n-1}|f^{(j)}(0)|^p\right),\quad h\in\B.
	$$

Conversely, assume that
	$$
	C_F=\sup_{h\in\B\setminus\{0\}}\frac{\int_\D |h(z)|^p|F_{f,n}(z)|^p\om(z)\,dA(z)}{\|h\|^p_\B}<\infty.
	$$
By \eqref{Eq:Leibniz}, we have
	\begin{equation}\label{eqxxx}
	\begin{split}
	\int_\D |h(z)|^p|f^{(n)}(z)|^p\om(z)\,dA(z)
	&\lesssim\int_\D|h(z)|^p|F_{f,n}(z)|^p\om(z)\,dA(z)\\
	&\quad+\sum_{j=0}^{n-1}\int_\D |h(z)|^p|f^{(j)}(z)|^p\om(z)\,dA(z)\\
	&\le C_F\|h\|^p_\B+\sum_{j=0}^{n-1}\int_\D |h(z)|^p|f^{(j)}(z)|^p\om(z)\,dA(z).
	\end{split}
	\end{equation}
To estimate the second term, we use \eqref{littlewoodpaley}, the growth of Bloch functions to get
	\begin{equation*}
	\begin{split}
	\int_{\D}|h(z)|^p|f^{(j)}(z)|^p\om(z)\,dA(z)
	&\lesssim\int_\D|h'(z)f^{(j)}(z)|^p\om_{[p]}(z)\,dA(z)\\
	&\quad+\int_\D|h(z)f^{(j+1)}(z)|^p\om_{[p]}(z)\,dA(z)+|h(0)f^{(j)}(0)|^p\\
	&\lesssim\|h\|_\B^p\int_\D|f^{(j)}(z)|^p\om(z)\,dA(z)+|h(0)f^{(j)}(0)|^p\\
	&\quad+\|h\|^p_\B\int_\D|f^{(j+1)}(z)|^p\left(\log\frac{e}{1-|z|}\right)^p\om_{[p]}(z)\,dA(z)\\
	&\lesssim\|h\|_\B^p\int_\D|f^{(j+1)}(z)|^p\om_{[p]}(z)\,dA(z)+\|h\|_\B^p|f^{(j)}(0)|^p\\
	&\quad+\|h\|_\B^p\int_\D|f^{(j+1)}(z)|^p\om(z)\,dA(z)+|h(0)f^{(j)}(0)|^p\\
	&\lesssim\|h\|_\B^p\int_\D|f^{(j+1)}(z)|^p\om(z)\,dA(z)+\|h\|_\B^p|f^{(j)}(0)|^p\\
	&\asymp\|h\|_\B^p\left(\int_\D|f^{(j+1)}(z)|^p{\om}(z)\,dA(z)+|f^{(j)}(0)|^p\right).
	\end{split}
	\end{equation*}
By combining this estimate with \eqref{eqxxx} and by applying Lemma \ref{Lemma:derivative} we finally obtain
	\begin{align*}
	\int_\D |h(z)|^p|f^{(n)}(z)|^p\om(z)\,dA(z)
	&\lesssim\|h\|_\B^p\left(C_F+\int_\D |f^{(n)}(z)|^p\om(z)\,dA(z)+\sum_{j=0}^{n-1}|f^{(j)}(0)|^p\right)\\
	&\lesssim C_F\|h\|^p_\B,
	\end{align*}
which is what we wished to prove.
\end{proof}

\section{Proof of Theorem~\ref{theorem}}

Denote $\widetilde{\om}(z)=\frac{\widehat{\om}(z)}{1-|z|}$ for all $z\in\D$, and observe that $\|f\|_{A^p_\om}\asymp\|f\|_{A^p_{\widetilde{\om}}}$ for all $f\in\H(\D)$ by \cite[Proposition~5]{PRS1}, provided $\om\in\DDD$. The weight $\widetilde\om$ appears repeatedly in our arguments. Apart from the just mentioned asymptotic norm equality, $\widetilde\om$ has the obvious and useful property that it never vanishes on $\D$ as long as $\widehat{\om}$ is strictly positive which is our case by our initial hypothesis.

The next lemma shows that we may multiply $\widetilde{\om}$ by the distance from the boundary to a sufficiently small negative power depending on $\om\in\DDD$, and the resulting function is also a radial weight in the class $\DDD$.

\begin{lemma}\label{lemma:weights-simple}
Let $\om\in\DDD$. Then there exists an $\e=\e(\om)>0$ such that
	\begin{equation}\label{hyhyhyh}
	\int_r^1\widetilde{\om}_{[-\e]}(t)\,dt\asymp\frac{\widehat{\om}(r)}{(1-r)^{\e}},\quad 0\le r<1,
	\end{equation}
and thus $\widetilde{\om}_{[-\e]}\in\RR$.
\end{lemma}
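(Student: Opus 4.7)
The plan is to unravel the definitions and reduce \eqref{hyhyhyh} to the standard polynomial growth/decay estimates that the classes $\DD$ and $\Dd$ impose on $\widehat{\om}$. Since $\widetilde{\om}_{[-\e]}(t)=\widehat{\om}(t)/(1-t)^{1+\e}$, the asymptotic equality \eqref{hyhyhyh} amounts to
$$\int_r^1\frac{\widehat{\om}(t)}{(1-t)^{1+\e}}\,dt\asymp\frac{\widehat{\om}(r)}{(1-r)^\e},\quad 0\le r<1.$$
I would attack the two inequalities separately, each using only one of the two doubling conditions that together constitute $\om\in\DDD$.

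For the $\lesssim$ direction, I would invoke $\om\in\Dd$. A standard iteration of \eqref{Dcheck}, as in \cite[Lemma~2.1]{PelSum14}, supplies an $\alpha=\alpha(\om)>0$ such that $r\mapsto\widehat{\om}(r)/(1-r)^\alpha$ is essentially decreasing, equivalently
$$\widehat{\om}(t)\le C\,\widehat{\om}(r)\left(\frac{1-t}{1-r}\right)^{\alpha},\quad 0\le r\le t<1.$$
Choosing any $\e=\e(\om)\in(0,\alpha)$ and inserting this bound into the integral gives
$$\int_r^1\frac{\widehat{\om}(t)}{(1-t)^{1+\e}}\,dt\le\frac{C\,\widehat{\om}(r)}{(1-r)^\alpha}\int_r^1(1-t)^{\alpha-1-\e}\,dt\lesssim\frac{\widehat{\om}(r)}{(1-r)^\e},$$
the inner integral converging precisely because $\alpha-\e>0$. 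For the $\gtrsim$ direction only $\om\in\DD$ is needed. Restricting the integration to $[r,(1+r)/2]$ and combining the monotonicity of $\widehat{\om}$ with the defining estimate $\widehat{\om}((1+r)/2)\gtrsim\widehat{\om}(r)$ and the trivial bound $(1-t)^{-1-\e}\ge(1-r)^{-1-\e}$ on this subinterval yields
$$\int_r^1\frac{\widehat{\om}(t)}{(1-t)^{1+\e}}\,dt\ge\int_r^{(1+r)/2}\frac{\widehat{\om}(t)}{(1-t)^{1+\e}}\,dt\gtrsim\widehat{\om}(r)\cdot\frac{(1-r)/2}{(1-r)^{1+\e}}\asymp\frac{\widehat{\om}(r)}{(1-r)^\e}.$$

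The conclusion $\widetilde{\om}_{[-\e]}\in\RR$ will then fall out as a free by-product of \eqref{hyhyhyh}: writing $\tau=\widetilde{\om}_{[-\e]}$, the just-verified equivalence reads $\widehat{\tau}(r)\asymp\widehat{\om}(r)(1-r)^{-\e}=\tau(r)(1-r)$, which is precisely the defining condition of the regular class. I do not anticipate any genuine obstacle; the only mild delicacy is that the admissible range of $\e$ is governed by the exponent $\alpha$ produced by the $\Dd$ side of the hypothesis $\om\in\DDD$, so the threshold $\e=\e(\om)$ must be chosen small enough in terms of $\om$ alone, which is exactly what the statement permits.
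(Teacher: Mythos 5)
Your proof is correct and follows essentially the same route as the paper's: the upper bound via the pointwise $\Dd$-estimate $\widehat{\om}(t)\le C\bigl(\frac{1-t}{1-r}\bigr)^{\alpha}\widehat{\om}(r)$ with $\e\in(0,\alpha)$, the lower bound by restricting the integral to an interval of length comparable to $1-r$ on which $\widehat{\om}(t)\gtrsim\widehat{\om}(r)$ by the $\DD$ condition (the paper uses $[r,1-\frac{1-r}{K}]$ where you use $[r,\frac{1+r}{2}]$, which is the same argument with $K=2$), and regularity read off directly from \eqref{hyhyhyh}. No gaps.
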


\begin{proof}
By \cite[(2.27)]{PelRat2020}, $\om\in\Dd$ if and only if there exist constants $C=C(\om)>0$ and $\a=\a(\om)>0$ such that
	\begin{equation}\label{Eq:Dd-characterization}
	\widehat{\om}(t)\le C\left(\frac{1-t}{1-r}\right)^\a\widehat{\om}(r),\quad 0\le r\le t<1.
	\end{equation}
By choosing $\e=\e(\om)\in(0,\alpha)$, this and the hypothesis $\om\in\DD$ yield
	\begin{equation*}
	\begin{split}
	\int_r^1\widetilde{\om}_{[-\e]}(t)\,dt
	&\lesssim\frac{\widehat{\om}(r)}{(1-r)^\alpha}\int_r^1\frac{dt}{(1-t)^{1+\e-\alpha}}
	\asymp\frac{\widehat{\om}(r)}{(1-r)^\e}
	\lesssim\int_r^{1-\frac{1-r}{K}}\widetilde{\om}_{[-\e]}(t)\,dt
	\le\int_r^1\widetilde{\om}_{[-\e]}(t)\,dt
	\end{split}
	\end{equation*}
for each fixed $K>1$. Therefore \eqref{hyhyhyh} is satisfied, and $\widetilde{\om}_{[-\e]}\in\RR$ for all $\e\in(0,\alpha)$.
\end{proof}

With these preparations we are ready to show that \eqref{Eq:theorem} is a sufficient condition for $h^\nu_f: A^p_{\om}\rightarrow A^q_{\nu}$ to be bounded. The argument we emply will be also used in the proof of Theorem~\ref{theorem1}.

\medskip

\noindent\emph{Proof of sufficiency}. Assume \eqref{Eq:theorem}.
By Lemma~\ref{lemma:weights-simple} we may choose $\varepsilon=\varepsilon(\nu,q)>0$ sufficiently small such that both $\widetilde{\nu}_{[-(\varepsilon\frac{q'}{q})]}$ and $\widetilde{\nu}_{[-(\varepsilon)]}$ belong to $\DDD$. By \cite[Proposition~5]{PRS1}, Lemma~\ref{nthreproducing}, H\"older's inequality, Fubini's theorem, \eqref{Id: newrepresentation} and two applications of \cite[Theorem 1]{PR2014}, we deduce
\begin{align*}
  \|h^\nu_f(g)\|^q_{A^q_\nu}
	&\asymp\|h^\nu_f (g)\|^q_{A^q_{\widetilde{\nu}}}
	\le\int_\D\left(\int_{\D}|{g(\z)|\left|\frac{d^n}{d\z^n}(\z^nf(\z))\right||B_z^{\nu}(\z)}|\mu^n_{\nu}(\z)\,dA(\z)\right)^q\widetilde{\nu}(z)\,dA(z)\\
  &\asymp\int_\D\left(\int_{\D}|g(\z)|\left|\frac{d^n}{d\z^n}(\z^nf(\z))\right|(1-|\z|)^{n+\frac{\varepsilon}{q}}|B_z^{\nu}(\z)|^{\frac1q}\frac{|B_z^{\nu}(\z)|^{\frac1{q'}}}{(1-|\z|)^{\frac{\varepsilon}{q}}}\widetilde{\nu}(\z)\,dA(\z)\right)^q\widetilde{\nu}(z)\,dA(z)\\
  &\leq\int_\D\left(\int_\D|g(\z)|^q\left|\frac{d^n}{d\z^n}(\z^nf(\z))\right|^q(1-|\z|)^{nq+\varepsilon}|B^\nu_z(\z)|\widetilde{\nu}(\z)\,dA(\z)\right)\\
  &\quad\cdot\left(\int_\D|B^\nu_z(\z)|\widetilde{\nu}_{[-(\varepsilon\frac{q'}{q})]}(\zeta)\,dA(\z)\right)^{\frac{q}{q'}}\widetilde{\nu}(z)\,dA(z)\\
  &\asymp\int_\D\left(\int_\D|g(\z)|^q\left|\frac{d^n}{d\z^n}(\z^nf(\z))\right|^q(1-|\z|)^{nq+\varepsilon}|B^\nu_z(\z)|\widetilde{\nu}(\z)\,dA(\z)\right)\\
  &\quad\cdot\left(\int_0^{|z|}\frac{dt}{(1-t)^{1+\frac{q'\varepsilon}{q}}}+1\right)^{\frac{q}{q'}}\widetilde{\nu}(z)\,dA(z)\\
  &\asymp\int_\D\left(\int_\D|g(\z)|^q\left|\frac{d^n}{d\z^n}(\z^nf(\z))\right|^q(1-|\z|)^{nq+\varepsilon}|B^\nu_z(\z)|\widetilde{\nu}(\z)\,dA(\z)\right)\widetilde{\nu}_{[-\e]}(z)\,dA(z)\\
  &=\int_\D |g(\z)|^q\left|\frac{d^n}{d\z^n}(\z^nf(\z))\right|^q(1-|\z|)^{nq+\varepsilon}\widetilde{\nu}(\z)\left(\int_{\D}|B^\nu_\z(z)|\widetilde{\nu}_{[-\e]}(z)\,dA(z)\right)\,dA(\z)\\
  &\asymp\int_\D |g(\z)|^q\left|\frac{d^n}{d\z^n}(\z^nf(\z))\right|^q(1-|\z|)^{nq}\widetilde{\nu}(\z)\,dA(\z)\\
  &\leq\left(\int_\D|g(\z)|^p\widetilde{\om}(\z)\,dA(\z)\right)^{\frac{q}{p}}\cdot\left(\int_\D\left|\frac{d^n}{d\z^n}(\z^nf(\z))\right|^{\frac{pq}{p-q}}(1-|\z|)^{\frac{npq}{p-q}}\left(\frac{\widetilde{\nu}(\z)}{\widetilde{\om}(\z)^{\frac qp}}\right)^{\frac{p}{p-q}}\,dA(\z)\right)^{\frac{p-q}{p}}\\
  &\asymp\|g\|^q_{A^p_\om}\left(\int_\D\left|\frac{d^n}{d\z^n}(\z^nf(\z))(1-|\z|)^n\right|^{\frac{pq}{p-q}}\left(\frac{\widehat{\nu}(\z)}{\whw(\z)}\right)^{\frac{p}{p-q}}\widetilde{\om}(\z)\,dA(\z)\right)^{\frac{p-q}{p}},\quad g\in H^\infty.
\end{align*}
Since $\om$ is radial, $H^\infty$ is dense in $A^p_\om$, and therefore a standard density argument via the BLT-theorem, see for example \cite[Theorem~I.7]{ReedSimon}, now yields
	$$
	\|h^\nu_f\|^{\frac{pq}{p-q}}_{A^p_\om\rightarrow A^{q}_{\nu}}
	\lesssim
	\int_\D\left|\frac{d^n}{dz^n}(z^nf(z))(1-|z|)^n\right|^{\frac{pq}{p-q}}\left(\frac{\widehat{\nu}(z)}{\whw(z)}\right)^{\frac{p}{p-q}}\widetilde{\om}(z)\,dA(z)
	$$
for each $n\in\N$. Write
	\begin{equation}\label{def:W}
	W(z)=W_{\om,\nu,p,q,n}(z)=(1-|z|)^{n\frac{pq}{p-q}}\left(\frac{\widehat{\nu}(z)}{\whw(z)}\right)^{\frac{p}{p-q}}\widetilde{\om}(z),\quad z\in\D,
	\end{equation}
for short. Since $\om,\nu\in\DDD$, \cite[Lemma~2.1]{PelSum14} and \eqref{Eq:Dd-characterization} show that there exists an $N_1=N_1(\om,\nu,p,q)\in\N$ such that $W$ is a regular weight for each $n\ge N_1$. In particular, $W$ is a weight and hence Lemma~\ref{Lemma:derivative} now yields
	$$
	\|h^\nu_f\|_{A^p_\om\rightarrow A^{q}_{\nu}}
	\lesssim\|f^{(n)}\|_{A^{\frac{pq}{p-q}}_W}+\sum_{j=0}^{n-1}|f^{(j)}(0)|.
	$$
Therefore $h^\nu_f:A^p_{\om}\rightarrow A^q_{\nu}$ is bounded and the operator norm obeys the claimed upper bound for each fixed $n\ge N_1$. \hfill$\Box$

\medskip

For the necessity we will need a lemma concerning weights in $\DDD$. This one shows that a certain weight, induced by $\om,\nu\in\DDD$, appearing in our argument is regular.

\begin{lemma}\label{le:regularity}
Let $0<p\le 1$ and $\om,\nu\in\DDD$. Then $\left(\frac{\widehat{\nu}}{\widehat{\om}}\right)^p\widetilde{\om}$ is regular.
\end{lemma}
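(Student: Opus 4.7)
Since $0<p\le 1$, the weight in question can be rewritten as
\[
\tau(z)=\left(\frac{\widehat{\nu}(z)}{\widehat{\om}(z)}\right)^p\widetilde{\om}(z)=\frac{\widehat{\nu}(z)^p\,\widehat{\om}(z)^{1-p}}{1-|z|},
\]
where both exponents $p$ and $1-p$ are nonnegative, so no $\widehat{\om}$ or $\widehat{\nu}$ sits in a denominator. My plan is to verify the defining asymptotic equality of a regular weight, namely $\widehat{\tau}(r)\asymp\tau(r)(1-r)=\widehat{\nu}(r)^p\widehat{\om}(r)^{1-p}$, directly by integration, handling the two directions separately.

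For the upper bound on $\widehat{\tau}(r)=\int_r^1\widehat{\nu}(t)^p\widehat{\om}(t)^{1-p}(1-t)^{-1}\,dt$, I will invoke the characterization \eqref{Eq:Dd-characterization} of $\Dd$ applied to both $\om,\nu\in\Dd$. This yields constants $C,\alpha,\beta>0$ such that $\widehat{\om}(t)\le C\bigl((1-t)/(1-r)\bigr)^\alpha\widehat{\om}(r)$ and $\widehat{\nu}(t)\le C\bigl((1-t)/(1-r)\bigr)^\beta\widehat{\nu}(r)$ for $r\le t<1$. Taking the product with weights $1-p$ and $p$ gives the pointwise estimate $\widehat{\nu}(t)^p\widehat{\om}(t)^{1-p}\le C'\bigl((1-t)/(1-r)\bigr)^{\gamma}\widehat{\nu}(r)^p\widehat{\om}(r)^{1-p}$ with $\gamma=p\beta+(1-p)\alpha>0$. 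Substituting and using $\int_r^1(1-t)^{\gamma-1}\,dt=(1-r)^\gamma/\gamma$ absorbs the factor $(1-t)^{-1}$ and gives $\widehat{\tau}(r)\lesssim\widehat{\nu}(r)^p\widehat{\om}(r)^{1-p}$, as required. In particular this also proves that $\tau\in L^1(\D)$, so it is indeed a weight.

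For the lower bound I will restrict the integral to the subinterval $[r,(1+r)/2]$ and exploit the hypothesis $\om,\nu\in\DD$. By the very definition of $\DD$, $\widehat{\om}(r)\le C\widehat{\om}((1+r)/2)$ and $\widehat{\nu}(r)\le C\widehat{\nu}((1+r)/2)$, so monotonicity gives $\widehat{\om}(t)\ge\widehat{\om}((1+r)/2)\ge \widehat{\om}(r)/C$ and analogously for $\nu$ throughout $[r,(1+r)/2]$. Consequently
\[
\widehat{\tau}(r)\ge\int_r^{(1+r)/2}\frac{\widehat{\nu}(t)^p\widehat{\om}(t)^{1-p}}{1-t}\,dt\gtrsim\widehat{\nu}(r)^p\widehat{\om}(r)^{1-p}\int_r^{(1+r)/2}\frac{dt}{1-t}=\log 2\cdot\widehat{\nu}(r)^p\widehat{\om}(r)^{1-p}.
\]
Combining the two bounds yields $\widehat{\tau}(r)\asymp\tau(r)(1-r)$, which is exactly the regularity of $\tau$.

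There is no real obstacle: the key structural point is that the assumption $p\le1$ makes $\widehat{\nu}^p\widehat{\om}^{1-p}$ a genuine geometric average, so the $\Dd$-side decay estimates combine cleanly to kill the borderline singularity $1/(1-t)$, while the $\DD$-side estimates provide the matching lower bound by localizing to a Whitney-type interval. Had $p>1$, the ratio $\widehat{\nu}/\widehat{\om}$ would enter with a positive exponent larger than one and an additional interplay between the two weights would be needed, but that case does not arise here.
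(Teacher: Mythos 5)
Your proof is correct. The upper bound is exactly the paper's argument: apply the power-type decay estimate \eqref{Eq:Dd-characterization} coming from $\om,\nu\in\Dd$ to each factor, combine with exponents $p$ and $1-p$ (here $0<p\le1$ is what keeps both exponents nonnegative), and integrate the resulting power of $1-t$. The only place you deviate is the lower bound: the paper integrates over all of $[r,1)$ using the matching power-type \emph{lower} estimate $\widehat{\om}(t)\gtrsim\left(\frac{1-t}{1-r}\right)^{\alpha}\widehat{\om}(r)$ furnished by \cite[Lemma~2.1]{PelSum14} for $\DD$-weights, whereas you localize to the Whitney interval $[r,\frac{1+r}{2}]$ and use the raw definition of $\DD$ together with the monotonicity of $\widehat{\om}$ and $\widehat{\nu}$ to bound the integrand from below there, picking up the factor $\log 2$ from $\int_r^{(1+r)/2}\frac{dt}{1-t}$. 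Your version is marginally more elementary since it avoids the auxiliary lemma, while the paper's version is symmetric with its upper-bound computation; both are complete and yield the same two-sided estimate $\widehat{\tau}(r)\asymp\widehat{\nu}(r)^p\whw(r)^{1-p}=\tau(r)(1-r)$.
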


\begin{proof}
Since $\om,\nu\in\DDD\subset\Dd$, it follows from \eqref{Eq:Dd-characterization} that there exist $\beta_1=\beta_1(\nu)>0$ and $\beta_2=\beta_2(\om)>0$ such that
 \begin{align*}
     \int_r^1\left(\frac{\widehat{\nu}(s)}{\widehat{\om}(s)}\right)^p\widetilde{\om}(s)\,ds&=\int_r^1\frac{\widehat{\nu}(s)^p\whw(s)^{1-p}}{1-s}\,ds\\
     &\lesssim\left(\frac{\widehat{\nu}(r)}{(1-r)^{\beta_1}}\right)^p\left(\frac{\whw(r)}{(1-r)^{\beta_2}}\right)^{1-p}\int_r^1(1-s)^{\beta_1p+\beta_2(1-p)-1}\,ds\\
     &\asymp \widehat{\nu}(r)^p\whw(r)^{1-p},\quad 0\leq r<1.
 \end{align*}
Conversely, since $\om,\nu\in\DDD\subset\DD$, a reasoning similar to that above, but using \cite[Lemma~2.1]{PelSum14} instead of \eqref{Eq:Dd-characterization}, shows that there exist $\alpha_1=\alpha_1(\nu)>0$ and $\alpha_2=\alpha_2(\om)>0$ such that
  \begin{align*}
     \int_r^1\left(\frac{\widehat{\nu}(s)}{\widehat{\om}(s)}\right)^p\widetilde{\om}(s)\,ds
     &\gtrsim\left(\frac{\widehat{\nu}(r)}{(1-r)^{\alpha_1}}\right)^p\left(\frac{\whw(r)}{(1-r)^{\alpha_2}}\right)^{1-p}\int_r^1(1-s)^{\alpha_1p+\alpha_2(1-p)-1}\,ds\\
     &\asymp \widehat{\nu}(r)^p\whw(r)^{1-p},\quad 0\leq r<1.
 \end{align*}
From these estimates it follows that $\left(\frac{\widehat{\nu}}{\widehat{\om}}\right)^p\widetilde{\om}$ is a regular weight.
\end{proof}

The proof of the necessity relies also heavily on a so-called atomic decomposition of functions in $A^p_\om$ with $\om\in\DDD$. To this end, some notation used in \cite{PRS-2021} are needed. For each $K\in\mathbb{N}\backslash\{1\}$, $j\in\mathbb{N}\cup\{0\}$ and $l=0,1,\ldots,K^{j+3}-1$, the dyadic polar rectangle is defined as
	$$
	Q_{j,l}=\left\{z\in\mathbb{D}:~\rho_{j}\leq|z|<\rho_{j+1},\arg z\in\left[\frac{2\pi l}{K^{j+3}},\frac{2\pi (l+1)}{K^{j+3}}\right)\right\},\quad \rho_j=\rho_j(K)=1-K^{-j},
	$$
and its center is denoted by $\zeta_{j,l}$. For each $M \in \mathbb{N}$ and $k=1, \ldots, M^{2}$, the rectangle $Q_{j, l}^{k}$ is defined as the result of dividing $Q_{j, l}$ into $M^{2}$ pairwise disjoint rectangles of equal Euclidean area, and the centers of these squares are denoted by $\zeta_{j, l}^{k}$, respectively. Write $\lambda=\left\{\lambda_{j,l}^k\right\} \in \ell^{p}$ if
$$
\|\lambda\|_{\ell^p}=\left(\sum_{j=0}^{\infty}\sum_{l=0}^{K^{j+3}-1} \sum_{k=1}^{M^{2}}\left|\lambda_{j, l}^k\right|^{p}\right)^{\frac{1}{p}}<\infty .
$$

\medskip

\noindent\emph{Proof of necessity}. Assume that $h^\nu_f:A^p_{\om}\rightarrow A^q_{\nu}$ is bounded. The first step in the proof consists of showing that this implies the boundedness of another operator which is more convenient for testing. To find out what this operator is, let $B^\alpha_z$ denote the reproducing kernel of $A^2_\alpha$ with $-1<\alpha<\infty$ at $z$. Further, write $dA_\alpha(z)=(1-|z|^2)^\alpha dA(z)$ for short. Since $f\in A^1_{\nu_{\log}}$ by the hypothesis, two applications of Fubini's theorem and the reproducing formula for functions in $A^1_\alpha$ yield
	\begin{equation}\label{jhgjhgjhg}
	\begin{split}
	\langle h_{f}^{\nu}(g),h\rangle_{A_{\nu}^{2}}	
	&=\int_{\mathbb{D}}f(\zeta)\overline{g(\zeta)}\overline{h(\zeta)}\nu(\zeta)dA(\zeta)\\
	&=\int_{\mathbb{D}}f(\zeta)\overline{g(\zeta)}\overline{\int_{\mathbb{D}}h(z)\overline{B_{\zeta}^{\alpha}(z)}\,dA_{\alpha}(z)}\nu(\zeta)\,dA(\zeta)\\
	&=\int_{\mathbb{D}}\left(\int_{\mathbb{D}}f(\zeta)\overline{g(\zeta)}\overline{B_{z}^{\alpha}(\zeta)}\nu(\zeta)\,dA(\zeta)\right)\overline{h(z)}\,dA_{\alpha}(z)\\
	&=\langle S_{f}^{\nu,\alpha}(g),h\rangle_{A_{\alpha}^{2}}, \quad g,h\in H^\infty,
	\end{split}
	\end{equation}
where
	\begin{equation}\label{new-operator}
	S_{f}^{\nu,\alpha}(g)(z)=\int_{\mathbb{D}}f(\zeta)\overline{g(\zeta)}\overline{B_{z}^{\alpha}(\zeta)}\nu(\zeta)dA(\zeta),
	\quad z\in\mathbb{D},\quad g\in H^{\infty}.
	\end{equation}
By \eqref{jhgjhgjhg}, H\"{o}lder's inequality, the boundedness of $h^\nu_f$ and \cite[Proposition~5]{PRS1}, we deduce
	\begin{equation}\label{eqq-1}
	\begin{split}
	|\langle S_{f}^{\nu,\alpha}(g),h\rangle_{A_{\alpha}^{2}}|&=|\langle h_{f}^{\nu}(g),h\rangle_{A_{\nu}^{2}}|
	\le\| h_{f}^{\nu}(g)\|_{A_{\nu}^{q}}\|h\|_{A_{\nu}^{q'}}\\
	&\lesssim \|h_{f}^{\nu}\|_{A_{\omega}^{p}\to A_{\nu}^{q}}\|g\|_{A_{\omega}^{p}}\|h\|_{A_{\widetilde{\nu}}^{q'}},\quad g,h\in H^\infty.
	\end{split}
	\end{equation}
Since $\nu\in\DDD$ by the hypothesis, a standard argument based on \cite[Lemma~2.1]{PelSum14} reveals that there exists an $\alpha_0=\alpha_0(\nu,q)>0$ such that the function $V$, defined by
	$$
	V(z)
	=V_{\nu,q,\alpha}(z)
	=\left(\frac{(1-|z|^{2})^{\alpha}}{\widetilde{\nu}(z)}\right)^{q}\widetilde{\nu}(z),\quad z\in\mathbb{D},
	$$
is a regular weight for each $\alpha\ge\alpha_0$. Fix such an $\alpha$. Then $P_{\alpha}: L^{q'}_{\widetilde{\nu}}\to A^{q'}_{\widetilde{\nu}}$ is bounded by \cite[Theorem~3]{PR2014}, and further \cite[Theorem 6]{PR2014} yields $(A_{V}^{q})^{\star}\simeq A_{\widetilde{\nu}}^{q'}$ via the $A_{\alpha}^{2}$-pairing. It follows from this duality relation and \eqref{eqq-1} that
	\begin{equation}\label{boundednessofS}
	\|S_{f}^{\nu,\alpha}(g)\|_{A_{V}^{q}}
	\lesssim\|h_{f}^{\nu}\|_{A_{\omega}^{p}\to A_{\nu}^{q}}\|g\|_{A_{\omega}^{p}}, \quad g\in H^\infty,
	\end{equation}
and thus $S_{f}^{\nu,\alpha}:A_{\omega}^{p}\to A_{V}^{q}$ is bounded by the density argument.

The next step is to find suitable test functions. To do this, define
	\begin{equation}\label{def:U}
	U(z)
	=U_{\om,\nu,p,q}(z)
	=\left(\frac{\widehat{\nu}(z)}{\widehat{\omega}(z)}\right)^{\frac{pq-p}{pq-p+q}}\widetilde{\omega}(z),\quad z\in\mathbb{D}.
	\end{equation}
Since $1<q<p<\infty$ by the hypothesis, $\frac{pq-p}{pq-p+q}<1$, and therefore $U$ is a regular weight by Lemma~\ref{le:regularity}. Further, since $\om\in\DDD$ by the hypothesis, there exist $K_1=K_1(\om)\in\N$ and $K_2=K_2(U)\in\N$ such that \eqref{Dcheck} for $\om$ and $U$, respectively, is satisfied. Let $K=\max\{K_1,K_2\}$. For each $j\in\N\cup\{0\}$ and $l=0,1,\ldots,K^{j+3}-1$, let $Q_{j,l}$ be the corresponding dyadic polar rectangle. It follows from \cite[Theorem~1]{PRS-2021} that there exists an $M_{0}=M_0(\om,p)>0$ such that for each $M\ge M_0$ and each sequence $\{\lambda_{j,l}^{k}\}\in\ell^{p}$, the function $F$, defined by
	$$
	F(z)
	=F_{\om,p}(z)
	=\sum_{j,l,k}\lambda_{j,l}^{k}\frac{(1-|\z_{j,l}^{k}|)^{M-\frac{1}{p}}
	\widehat{\omega}(\z_{j,l}^{k})^{-\frac{1}{p}}}{(1-\overline{\z_{j,l}^{k}}z)^{M}}
	=\sum_{j,l,k}\lambda_{j,l}^{k}F_{j,l,k}(z),\quad z\in\D,
	$$
belongs to $A^p_\om$ and satisfies
	\begin{equation}\label{ie:atomic}
	\|F\|_{A^p_\om}\lesssim\|\{\lambda_{j,l}^{k}\}\|_{\ell^{p}}.
	\end{equation}
Here
	\begin{equation}\label{def-F}
	F_{j,l,k}(z)
	=\frac{(1-|\z_{j,l}^{k}|)^{M-\frac{1}{p}}
	\widehat{\omega}(\z_{j,l}^{k})^{-\frac{1}{p}}}{(1-\overline{\z_{j,l}^{k}}z)^{M}},\quad z\in\D,
	\end{equation}
and $\z_{j,l}^{k}$ is the center of the rectangle $Q^k_{j,l}$ for $k=1,2,\ldots,M^2$. The function $F$ is the starting point for the testing.

Recall that $S_{f}^{\nu,\alpha}:A_{\omega}^{p}\to A_{V}^{q}$ is bounded. The norm inequality \eqref{boundednessofS} together with \eqref{ie:atomic} yields
	\begin{equation}\label{hgfdhgfdhgfd}
	\begin{split}
  \|S_{f}^{\nu,\alpha}(F)\|^q_{A^q_V}
	&=\int_{\mathbb{D}}\bigg|\sum_{j,l,k}\lambda_{j,l}^{k}S_{f}^{\nu,\alpha}(F_{j,l,k})(z)\bigg|^{q}V(z)\,dA(z)\\
  &\lesssim\|h_{f}^{\nu}\|_{A_{\omega}^{p}\to A_{\nu}^{q}}^{q}\|F\|_{A_{\omega}^{p}}^{q}
	\lesssim\|h_{f}^{\nu}\|_{A_{\omega}^{p}\to A_{\nu}^{q}}^{q}\|\{\lambda_{j,l}^{k}\}\|_{\ell^{p}}^{q}.
	\end{split}
	\end{equation}
Let $r_{j,l}^{k}$ denote the Rademacher functions defined on $(0,1)$. We next replace $\lambda_{j,l}^{k}$ by $\lambda_{j,l}^{k}r_{j,l}^{k}(t)$ in \eqref{hgfdhgfdhgfd}, then integrate the obtained inequality with respect to $t$ from 0 to 1, and finally apply Fubini's theorem and Khinchine's inequality \cite[Appendix~A]{D} to get
	\begin{equation}\label{eqq-9}
	\int_{\mathbb{D}}
	\left(\sum_{j,l,k}|\lambda_{j,l}^{k}|^{2}|S_{f}^{\nu,\alpha}(F_{j,l,k})(z)|^{2}\right)^{\frac{q}{2}}V(z)\,dA(z)
	\lesssim\|h_{f}^{\nu}\|_{A_{\omega}^{p}\to A_{\nu}^{q}}^{q}\|\{\lambda_{j,l}^{k}\}\|_{\ell^{p}}^{q}.
	\end{equation}
To proceed further, some more notation is needed. The pseudohyperbolic disc of center $z\in\D$ and of radius $r\in(0,1)$ is denoted by $\Delta(z,r)$, and it is the set $\left\{\zeta\in\mathbb{D}:\left|\frac{z-\zeta}{1-\overline{z}\zeta}\right|<r\right\}$. It is well known that $\Delta(z,r)$ is the Euclidean disc centered at $\left(1-r^{2}\right) z /\left(1-r^{2}|z|^{2}\right)$ and of radius $\left(1-|z|^{2}\right)r/\left(1-r^{2}|z|^{2}\right)$. We claim that for each $1<q<\infty$,
	\begin{equation}\label{eqq-10}
	\begin{split}
	&\quad\sum_{j,l,k}|\lambda_{j,l}^{k}|^{q}\int_{\Delta(\z_{j,l}^{k},\delta)}\big|S_{f}^{\nu,\alpha}(F_{j,l,k})(z)\big|^{q}V(z)\,dA(z)\\
	&\lesssim
	\int_{\mathbb{D}}\bigg(\sum_{j,l,k}|\lambda_{j,l}^{k}|^{2}|S_{f}^{\nu,\alpha}(F_{j,l,k})(z)|^{2}\bigg)^{\frac{q}{2}}V(z)\,dA(z).
	\end{split}
	\end{equation}
To see this, first observe that $\{\z_{j,l}^{k}\}$ is a $\delta$-lattice for some $\delta=\delta(\om,\nu)\in (0,1)$ by the definition. Hence, if $q\geq2$, then the inequality $\sum_{j} c_{j}^{x} \leq\left(\sum_{j} c_{j}\right)^{x}$, valid for all $c_{j} \geq 0$ and $x \geq 1$, and Fubini's theorem imply
	\begin{align*}
   &\quad\sum_{j,l,k}|\lambda_{j,l}^{k}|^{q}\int_{\Delta(\z_{j,l}^{k},\delta)}\big|S_{f}^{\nu,\alpha}(F_{j,l,k})(z)\big|^{q}V(z)\,dA(z)\\
   &=\int_\D\bigg(\sum_{j,l,k}|\lambda_{j,l}^{k}|^{q}|S_{f}^{\nu,\alpha}(F_{j,l,k})(z)|^{q}\chi_{\Delta(\z_{j,l}^{k},\delta)}(z)\bigg)^{\frac2q\cdot\frac{q}{2}}V(z)\,dA(z)\\
   &\leq\int_\D\bigg(\sum_{j,l,k}|\lambda_{j,l}^{k}|^{2}|S_{f}^{\nu,\alpha}(F_{j,l,k})(z)|^{2}\bigg)^{\frac{q}{2}}V(z)\,dA(z).
\end{align*}
To get the same estimate for $1<q<2$ we apply H\"older's inequality. It together with the fact that the number of discs $\Delta(\z_{j,l}^{k},\delta)$ to which each $z$ may belong to is uniformly bounded yields
	\begin{align*}
  &\quad\sum_{j,l,k}|\lambda_{j,l}^{k}|^{q}\int_{\Delta(\z_{j,l}^{k},\delta)}\big|S_{f}^{\nu,\alpha}(F_{j,l,k})(z)\big|^{q}V(z)\,dA(z)\\
  &\le\int_{\mathbb{D}}\left(\sum_{j,l,k}\big|\lambda_{j,l}^{k}S_{f}^{\nu,\alpha}(F_{j,l,k})(z)\big|^{2}\chi_{\Delta(\z_{j,l}^{k},\delta)}(z)\right)^{\frac{q}{2}}\cdot\left(\sum_{j,l,k}\chi_{\Delta(\z_{j,l}^{k},\delta)}(z)\right)
	^{1-\frac{q}{2}}V(z)\,dA(z)\\
	&\lesssim\int_\D\left(\sum_{j,l,k}|\lambda_{j,l}^{k}|^{2}|S_{f}^{\nu,\alpha}(F_{j,l,k})(z)|^{2}\right)
	^{\frac{q}{2}}V(z)\,dA(z).
	\end{align*}
Consequently, by combining \eqref{eqq-9}, \eqref{eqq-10}, the subharmonicity of $|S_{f}^{\nu,\alpha}(F_{j,l,k})|^{q}$ and the fact that $V$ is regular, we deduce
	\begin{equation}\label{Ie:sub-atomic}
  \begin{split}
  \|h_{f}^{\nu}\|_{A_{\omega}^{p}\to A_{\nu}^{q}}^{q}\|\{\lambda_{j,l}^{k}\}\|_{\ell^{p}}^{q}
	&\gtrsim \int_{\mathbb{D}}\bigg(\sum_{j,l,k}|\lambda_{j,l}^{k}|^{2}|S_{f}^{\nu,\alpha}(F_{j,l,k})(z)|^{2}\bigg)^{\frac{q}{2}}V(z)\,dA(z)\\
  &\gtrsim\sum_{j,l,k}|\lambda_{j,l}^{k}|^{q}\int_{\Delta(\z_{j,l}^{k},\delta)}\big|S_{f}^{\nu,\alpha}(F_{j,l,k})(z)\big|^{q}V(z)\,dA(z)\\
  &\gtrsim\sum_{j,l,k}|\lambda_{j,l}^{k}|^{q}V(\z_{j,l}^{k})(1-|\z_{j,l}^{k}|)^{2}|S_{f}^{\nu,\alpha}(F_{j,l,k})(\z_{j,l}^{k})|^{q}.
  \end{split}
	\end{equation}
Further, the definitions \eqref{new-operator} and \eqref{def-F}, together with the reproducing formula for functions in $A^1_{\mu_{\nu}^{n}}$, \eqref{id:generalizedreproducing} and Fubini's theorem yield
	\begin{align*}
  S_{f}^{\nu,\alpha}(F_{j,l,k})(\z_{j,l}^{k})
	=&\int_{\mathbb{D}}f(z)\overline{F_{j,l,k}(z) B^{\alpha}_{\z_{j,l}^{k}}(z)}\nu(z)\,dA(z)\\
	=&\frac{(1-|\z_{j,l}^{k}|)^{M-\frac{1}{p}}}{\widehat{\omega}(\z_{j,l}^{k})^{\frac{1}{p}}}
\int_{\mathbb{D}}f(z)\frac{1}{(1-\z_{j,l}^{k}\bar{z})^{M+\alpha+2}}\nu(z)\,dA(z)\\
=&\frac{(1-|\z_{j,l}^{k}|)^{M-\frac{1}{p}}}{\widehat{\omega}(\z_{j,l}^{k})^{\frac{1}{p}}}
\int_{\mathbb{D}}f(z)\overline{\int_{\mathbb{D}}\frac{1}{(1-\overline{\z_{j,l}^{k}}\zeta)^{M+\alpha+2}}
\overline{B_{z}^{\mu_{\nu}^{n}}(\zeta)}\mu_{\nu}^{n}(\zeta)\,dA(\zeta)}\nu(z)\,dA(z)\\
=&\frac{(1-|\z_{j,l}^{k}|)^{M-\frac{1}{p}}}{\widehat{\omega}(\z_{j,l}^{k})^{\frac{1}{p}}}
\int_{\mathbb{D}}\left(\int_{\mathbb{D}}f(z)\overline{B_{\zeta}^{\mu_{\nu}^{n}}(z)}\nu(z)\,dA(z)\right)\frac{\mu_{\nu}^{n}(\zeta)}{(1-\z_{j,l}^{k}\bar{\zeta})^{M+\alpha+2}}
\,dA(\zeta)\\
=&\frac{(1-|\z_{j,l}^{k}|)^{M-\frac{1}{p}}}{\widehat{\omega}(\z_{j,l}^{k})^{\frac{1}{p}}}
\int_{\mathbb{D}}F_{f,n}(\z)\frac{\mu_{\nu}^{n}(\zeta)}{(1-\z_{j,l}^{k}\bar{\zeta})^{M+\alpha+2}}\,dA(\zeta),
	\end{align*}
where $F_{f,n}(\z)=\frac{d^n}{d\z^n}(\z^nf(\z))$.
This together with \eqref{Ie:sub-atomic} gives
	\begin{equation*}
	\begin{split}
	&\sum_{j,l,k}|\lambda_{j,l}^{k}|^{q}V(\z_{j,l}^{k})(1-|\z_{j,l}^{k}|^{2})^{2+qM-\frac{q}{p}}\widehat{\omega}(\z_{j,l}^{k})^{-\frac{q}{p}}
\bigg|\int_{\mathbb{D}}\frac{F_{f,n}(\z)}{(1-\z_{j,l}^{k}\overline{\zeta})^{M+\alpha+2}}\mu_{\nu}^{n}(\zeta)\,dA(\zeta)\bigg|^{q}\\
	&\quad\lesssim\|h_{f}^{\nu}\|_{A_{\omega}^{p}\to A_{\nu}^{q}}^{q}\|\{\lambda_{j,l}^{k}\}\|_{\ell^{p}}^{q}.
	\end{split}
	\end{equation*}
Write now $s=\frac{pq}{p-q}$ for short, and recall that $1<q<p<\infty$ by the hypothesis. The well-known duality relation $(\ell^{\frac{p}{q}})^{\star}\simeq \ell^{\frac{p}{p-q}}$ now implies
	\begin{equation*}
	\sum_{j,l,k}V(\z_{j,l}^{k})^{\frac{s}{q}}\frac{(1-|\z_{j,l}^{k}|^{2})^{\frac{2s}{q}+sM-\frac{s}{p}}}
	{\widehat{\omega}(\z_{j,l}^{k})^{\frac{s}{p}}}
	\bigg|\int_{\mathbb{D}}\frac{F_{f,n}(\z)}{(1-\z_{j,l}^{k}\overline{\zeta})^{M+\alpha+2}}\mu_{\nu}^{n}(\zeta)\,dA(\zeta)\bigg|^{s}
	\lesssim\|h_{f}^{\nu}\|_{A_{\omega}^{p}\to A_{\nu}^{q}}^{s},
	\end{equation*}
which in turn gives
	\begin{equation}\label{Ie:dualrelation1}
	\limsup_{r\to1^-}\sum_{j,l,k}V(\z_{j,l}^{k})^{\frac{s}{q}}\frac{(1-|\z_{j,l}^{k}|^{2})^{\frac{2s}{q}+sM-\frac{s}{p}}}{\widehat{\omega}(\z_{j,l}^{k})^{\frac{s}{p}}}
\left|\int_{\mathbb{D}}\frac{\left(F_{f,n}\right)_r(\zeta)}{(1-\z_{j,l}^{k}\overline{\zeta})^{M+\alpha+2}}\mu_{\nu}^{n}(\zeta)\,dA(\zeta)\right|^{s}
	\lesssim\|h_{f}^{\nu}\|_{A_{\omega}^{p}\to A_{\nu}^{q}}^{s}.
	\end{equation}
To be precise, to get \eqref{Ie:dualrelation1} by means of Lebesgue's dominated convergence theorem, we must show that
\begin{align}\label{xxxxxx}
  \int_{\D}\left|F_{f,n}(\zeta)\right| \mu_{\nu}^{n}(\zeta)\,dA(\zeta)<\infty.
\end{align}
But $n$ applications of the inequality $M_1(r,f')\lesssim M_1(\frac{1+r}{2},f)/(1-r)$, which is a consequence of the Cauchy integral formula, together with the hypothesis $\nu\in\DDD$ easily shows that
		$$
		\int_\D \left|F_{f,n}(\zeta)\right|
	\log\frac{e}{1-|\zeta|}\mu^n_\nu(\z)\,dA(\z)
	\lesssim\int_\D\left|f(\z)\right|
	\log\frac{e}{1-|\zeta|}\widetilde{\nu}(\z)\,dA(\z),\quad f\in\H(\D).
		$$
Therefore, to deduce \eqref{xxxxxx}, it suffices to bound the last quantity by the norm of $f$ in $A^1_{\nu_{\log}}$. But this follows by an integration by parts similar to that in \cite[Proof of Theorem~6]{PelRat2020} once we show that
		$$
		\int_r^1\log\frac{e}{1-t}\widetilde{\nu}(t)\,dt\lesssim\int_r^1\log\frac{e}{1-t}\nu(t)\,dt,\quad 0\le r <1.
		$$
Since $\nu\in\DDD\subset\Dd$ by the hypothesis, there exists an $\alpha=\alpha(\nu)>0$ such that
	\begin{equation*}
	\begin{split}
  \int_r^1\log\frac{e}{1-t}\widetilde{\nu}(t)\,dt
	&\lesssim\frac{\widehat{\nu}(r)}{(1-r)^\alpha}\int_r^1\frac{\log\frac{e}{1-t}}{(1-t)^{1-\a}}\,dt\lesssim\widehat{\nu}(r)\log\frac{e}{1-r},\quad 0\le r<1.
	\end{split}
	\end{equation*}
Further, an integration by parts gives
	\begin{equation*}
	\begin{split}
	\int_r^1\log\frac{e}{1-t}\nu(t)\,dt\ge\widehat{\nu}(r)\log\frac{e}{1-r},\quad 0\le r <1,
	\end{split}
	\end{equation*}
and hence \eqref{xxxxxx} holds.

Recall that $W$ and $U$ are defined by \eqref{def:W} and \eqref{def:U}, respectively, $U$ is regular, and so is $W$ if $n\ge N_1$. Since also $\mu^n_\nu$ is regular, it follows from \cite[Theorem 3]{PR2014} that the Bergman projection $P_{\mu_{\nu}^{n}}: L^{s}_{W}\to A^{s}_{W}$ is bounded, and hence $(A_{U}^{s'})^{\star}\simeq A_{W}^{s}$ via the $A^2_{\mu_{\nu}^{n}}$-pairing by \cite[Theorem 6]{PR2014}. Moreover, by \cite[Theorem~2]{PRS-2021}, there exists an $M_1=M_1(\om,\nu,p,q)>M_0$ such that for each fixed $M>M_1$, each $h\in A^{s'}_U$ can be represented as
	\begin{equation}\label{rep:h}
	h(z)=\sum_{j,l,k}\gamma(h)_{j,l}^{k}\frac{(1-|\z_{j,l}^{k}|^{2})^{M+\alpha+2-\frac{1}{s'}}\widehat{U}(\rho_{j})^{-\frac{1}{s'}}}
	{(1-\overline{\z_{j,l}^{k}}z)^{M+\alpha+2}}
	\end{equation}
with
	\begin{equation}\label{lkjhgfhgfd}
	\|h\|_{A^{s'}_{U}}\asymp\|\{\gamma(h)_{j,l}^{k}\}\|_{\ell^{s'}},
	\end{equation}
where $\rho_j=1-K^{-j}$ for $K\in\N$ that has been fixed already. Therefore, the duality $(A_{U}^{s'})^{\star}\simeq A_{W}^{s}$, via the $A^2_{\mu_{\nu}^{n}}$-pairing, together with the representation \eqref{rep:h}, Fubini's theorem, H\"older's inequality and \eqref{lkjhgfhgfd} yields
	\begin{equation*}
  \begin{split}
	&\left\|\left(F_{f,n}\right)_r\right\|_{A_{W}^{s}}
	=\sup_{\{h:\|h\|_{A^{s'}_{U}}=1\}}\left|\int_{\mathbb{D}}h(z)\overline{\left(F_{f,n}\right)_r(z)}\mu_{\nu}^{n}(z)\,dA(z)\right|\\
	&=\sup_{\{h:\|h\|_{A^{s'}_{U}}=1\}}\left|\int_{\mathbb{D}}\sum_{j,l,k}\gamma(h)_{j,l}^{k}\frac{(1-|\z_{j,l}^{k}|^{2})^{M+\alpha+2-\frac{1}{s'}}
	\widehat{U}(\rho_{j})^{-\frac{1}{s'}}}{(1-\overline{\z_{j,l}^{k}}z)^{M+\alpha+2}}\overline{\left(F_{f,n}\right)_r(z)}\mu_{\nu}^{n}(z)\,dA(z)\right|\\
	&=\sup_{\{h:\|h\|_{A^{s'}_{U}}=1\}}\left|\sum_{j,l,k}\gamma(h)_{j,l}^{k}\frac{(1-|\z_{j,l}^{k}|^{2})^{M+\alpha+2-\frac{1}{s'}}}{\widehat{U}(\rho_{j})^{\frac{1}{s'}}}
	\int_{\mathbb{D}}\frac{\overline{\left(F_{f,n}\right)_r(z)}}{(1-\overline{\z_{j,l}^{k}}z)^{M+\alpha+2}}\mu_{\nu}^{n}(z)\,dA(z)\right|\\
	&\le\sup_{\{h:\|h\|_{A^{s'}_{U}}=1\}}\|\{\gamma(h)_{j,l}^{k}\}\|_{\ell^{s'}}\\
	&\quad\cdot\left(\sum_{j,l,k}\frac{(1-|\z_{j,l}^{k}|^{2})^{sM+\alpha s+2s-\frac{s}{s'}}}{\widehat{U}(\rho_{j})^{\frac{s}{s'}}}
\left|\int_{\mathbb{D}}\left(F_{f,n}\right)_r(z)\frac{\mu_{\nu}^{n}(z)}{(1-\z_{j,l}^{k}\bar{z})^{M+\alpha+2}}\,dA(z)\right|^{s}\right)^{\frac{1}{s}}\\
&\asymp\left(\sum_{j,l,k}\frac{(1-|\z_{j,l}^{k}|^{2})^{sM+\alpha s+2s-\frac{s}{s'}}}{\widehat{U}(\rho_{j})^{\frac{s}{s'}}}
\left|\int_{\mathbb{D}}\left(F_{f,n}\right)_r(z)\frac{\mu_{\nu}^{n}(z)}{(1-\z_{j,l}^{k}\bar{z})^{M+\alpha+2}}\,dA(z)\right|^{s}\right)^{\frac{1}{s}},\quad 0<r<1,
\end{split}
\end{equation*}
provided $n$ is sufficiently large, say $n\ge N_2$, where $N_2=N_2(\om,\nu,p,q)\in\N$. Observe that the use of the dilation in $F_{f,n}$ in the formula above guarantees that the order of the summation and the integration can be interchanged on the third line. This is attributed to
	\begin{equation}\label{fubinixxx}
    \sum_{j,l,k}\frac{(1-|\z_{j,l}^{k}|^{2})^{sM+\alpha s+2s-\frac{s}{s'}}}{\widehat{U}(\rho_{j})^{\frac{s}{s'}}}
	\left(\int_{\mathbb{D}}\frac{\mu_{\nu}^{n}(z)}{|1-\z_{j,l}^{k}\bar{z}|^{M+\alpha+2}}\,dA(z)\right)^{s}<\infty,
	\end{equation}
provided $n$ is sufficiently large, depending on $\om,\nu,p$ and $q$.

Indeed, on one hand, for a fixed $c>1$, since $U$ is regular, \cite[Lemma~2.1]{PelSum14} allows us to find an $M_2=M_2(\om,\nu,p,q,c)>M_1$ such that for each fixed $M\ge M_2$ we have
		\begin{equation}\label{fubinixxx1}
    \frac{(1-|z|^{2})^{sM+\alpha s+2s-\frac{s}{s'}}}{\widehat{U}(z)^{\frac{s}{s'}}}\lesssim(1-|z|^{2})^c,\quad z\in\mathbb{D}.
		\end{equation}
On the other hand, it follows from \eqref{Ieq:newweight} and \cite[Theorem 1]{PR2014} that for each $n>M+\alpha+1$ we have
	\begin{equation}\label{fubinixxx2}
	\int_{\mathbb{D}}\frac{\mu_{\nu}^{n}(z)}{|1-\z_{j,l}^{k}\overline{z}|^{M+\alpha+2}}\,dA(z)
	\lesssim1,\quad j\in\mathbb{N}\cup\{0\},\quad l=0,1,\ldots,K^{j+3}-1,\quad k=1,2,\ldots,M^{2}.
	\end{equation}
Now, since $\{\z^k_{j,l}\}$ is $\delta$-lattice, by combining \eqref{fubinixxx1} and \eqref{fubinixxx2} we deduce that for each fixed $M\ge M_2$ and any $n>N_2=[M+\alpha+1]+1$,
	\begin{equation*}
	\begin{split}
	&\sum_{j,l,k}\frac{(1-|\z_{j,l}^{k}|^{2})^{sM+\alpha s+2s-\frac{s}{s'}}}{\widehat{U}(\rho_{j})^{\frac{s}{s'}}}
	\left(\int_{\mathbb{D}}\frac{\mu_{\nu}^{n}(z)}{|1-\z_{j,l}^{k}\bar{z}|^{M+\alpha+2}}\,dA(z)\right)^{s}
	\lesssim\sum_{j,l,k}(1-|\z_{j,l}^{k}|^{2})^c\\
	&\asymp\sum_{j,l,k}\int_{\Delta(\z_{j,l}^{k},\delta)}(1-|z|^{2})^{c-2}\,dA(z)
	\lesssim\int_\D(1-|z|^{2})^{c-2}\,dA(z)<\infty,
	\end{split}
	\end{equation*}
and hence \eqref{fubinixxx} is valid.

Finally, by elementary calculations based on \cite[Lemma~2.1]{PelSum14} and \eqref{Eq:Dd-characterization}, we deduce
	$$
	(1-|\z_{j,l}^{k}|^{2})^{sM+\alpha s+2s-\frac{s}{s'}}\widehat{U}(\rho_{j})^{-\frac{s}{s'}}
	\lesssim
	V(\z_{j,l}^{k})^{\frac{s}{q}}(1-|\z_{j,l}^{k}|^{2})^{\frac{2s}{q}+sM-\frac{s}{p}}\widehat{\omega}(\z_{j,l}^{k})^{-\frac{s}{p}}
	$$
for all $j\in\mathbb{N}\cup\{0\},~l=0,1,\ldots,K^{j+3}-1,~k=1,2,\ldots,M^{2}$, and therefore
	\begin{equation*}
  \begin{split}
	\left\|\left(F_{f,n}\right)_r\right\|_{A_{W}^{s}}^s
	\lesssim
	\sum_{j,l,k}\frac{V(\z_{j,l}^{k})^{\frac{s}{q}}(1-|\z_{j,l}^{k}|^{2})^{\frac{2s}{q}+sM-\frac{s}{p}}}{\widehat{\omega}(\z_{j,l}^{k})^{\frac{s}{p}}}
\left|\int_{\mathbb{D}}\left(F_{f,n}\right)_r(z)\frac{\mu_{\nu}^{n}(z)}{(1-\z_{j,l}^{k}\bar{z})^{M+\alpha+2}}\,dA(z)\right|^{s}.
\end{split}
\end{equation*}
By applying Fatou's lemma and combining this estimate with \eqref{Ie:dualrelation1}, we obtain
	$$
	\left\|\frac{d^n}{d(\cdot)^n}((\cdot)^nf(\cdot))\right\|_{A_{W}^{s}}
	\le\liminf_{r\to1^{-}}\left\|\left(F_{f,n}\right)_r\right\|_{A_{W}^{s}}
	\le\limsup_{r\to1^{-}}\left\|\left(F_{f,n}\right)_r\right\|_{A_{W}^{s}}
	\lesssim\|h_{f}^{\nu}\|_{A_{\omega}^{p}\to A_{\nu}^{q}}.
	$$
Lemma~\ref{Lemma:derivative} now shows that the operator norm obeys the claimed lower bound for each fixed $n\ge \max\{N_1,N_2\}$.
\hfill$\Box$

\medskip

We finish the proof of the theorem by noting that the number $N=N(\om,\nu,p,q)\in\N$ appearing in the statement equals to $\max\{N_1,N_2\}$.

\section{Proof of Theorem~\ref{weak factorization-t1}}

The argument applied in the proof of Theorem~\ref{theorem} allows us to characterize the boudedness of the small Hankel operator in a certain three-weight case. This result is the tool that we will use to obtain a weak factorization for the weighted Bergman space induced by a radial weight in $\DDD$. To give the exact statement, we write $h^\eta_f=h^\alpha_f$ and $A^1_{\eta\log}=A^1_{\alpha\log}$ when $\eta(z)=(\alpha+1)(1-|z|)^\alpha$ for $-1<\alpha<\infty$. Minor modifications in the proof of Theorem~\ref{theorem} yield the following result, the proof of which is left to the interested readers. Observe that if $\left(\frac{\widetilde{\nu}}{\widetilde{\om}}\right)^{\frac{p}{p-q}}\widetilde{\om}\in\DDD$, then one may choose $n=0$ by the Littlewood-Paley formulas.

\begin{theorem}\label{theorem-three weights-1}
Let $1<q<p<\infty$ and $\om,\nu\in\DDD$. Then there exist $\alpha_{0}=\alpha_{0}(\om,\nu,p,q)>0$ and $N=N(\om,\nu,p,q)\in\N$ such that, for $f\in A_{\alpha\log}^{1}$, $h^\alpha_f: A^p_{\om}\rightarrow A^q_{\nu}$ is bounded if and only if
	\begin{equation}\label{appendix-eq} \int_\D\left|f^{(n)}(z)\right|^{\frac{pq}{p-q}}(1-|z|)^{n\frac{pq}{p-q}}\left(\frac{\widehat{\nu}(z)}{\whw(z)}\right)^{\frac{p}{p-q}}\frac{\widehat{\om}(z)}{1-|z|}\,dA(z)<\infty
	\end{equation}
for each $\alpha\geq\alpha_{0}$ and for some (equivalently for all) $n\ge N$. Moreover,
	$$
	\|h^\alpha_f\|^{\frac{pq}{p-q}}_{A^p_\om\rightarrow A^{q}_{\nu}}
	\asymp
	\int_\D\left|f^{(n)}(z)\right|^{\frac{pq}{p-q}}(1-|z|)^{n\frac{pq}{p-q}}\left(\frac{\widehat{\nu}(z)}{\whw(z)}\right)^{\frac{p}{p-q}}\frac{\widehat{\om}(z)}{1-|z|}\,dA(z)+\sum_{j=0}^{n-1}|f^{(j)}(0)|^{\frac{pq}{p-q}}
	$$
for each fixed $\alpha\geq\alpha_{0}$ and $n\ge N$.
\end{theorem}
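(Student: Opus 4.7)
The plan is to mimic, step by step, the proof of Theorem~\ref{theorem}, replacing the weight $\nu$ appearing inside the small Hankel operator by the standard weight $(\alpha+1)(1-|z|)^\alpha$, while keeping the target space $A^q_\nu$ untouched. Since every standard weight belongs to $\DDD$, all the technical ingredients (boosting via Lemma~\ref{nthreproducing}, the kernel norm estimate \eqref{kernel}, atomic decomposition from \cite{PRS-2021}, duality from \cite[Theorems~3 and~6]{PR2014}, and the Rademacher--Khinchine trick) remain available. The auxiliary parameter $\alpha$ will have to be chosen sufficiently large, in a way depending on $\om,\nu,p,q$, to guarantee the regularity of the intermediate weights and the boundedness of the auxiliary Bergman projection; this dependence dictates~$\alpha_0$.

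For the sufficiency, I will apply the representation \eqref{Id: newrepresentation} with $\nu$ replaced by $(1-|z|)^\alpha$, so that the boosted weight is $\mu^n_\alpha(z)\asymp (1-|z|)^{\alpha+n-1}$ by \eqref{Ieq:newweight1}. The chain of estimates in the sufficiency part of Theorem~\ref{theorem} then carries over essentially verbatim: H\"older's inequality with a small parameter $\varepsilon=\varepsilon(\nu,q)>0$ produced by Lemma~\ref{lemma:weights-simple}, the kernel norm estimate \eqref{kernel} applied with the standard weight of parameter $\alpha$, and Fubini's theorem. The explicit factors $(1-|\z|)^\alpha$ coming from the boost and those coming from the inner kernel norms cancel against each other, leaving precisely the integrand appearing on the right-hand side of \eqref{appendix-eq}. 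A final application of Lemma~\ref{Lemma:derivative} converts $F_{f,n}$ into $f^{(n)}$ and yields the desired upper bound on the operator norm.

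For the necessity, the clean observation is that when $\nu$ is the standard weight $(1-|z|)^\alpha$ the testing operator $S^{\nu,\alpha}_f$ defined in \eqref{new-operator} is literally $h^\alpha_f$ itself, so the intermediate step of passing to a different operator $S$ becomes trivial in this setting. Provided $\alpha\ge\alpha_0(\nu,q)$, the weight $V(z)=\bigl((1-|z|)^\alpha/\widetilde\nu(z)\bigr)^q\widetilde\nu(z)$ is regular and by \cite[Theorems~3 and~6]{PR2014} one has the identification $(A^q_V)^\star\simeq A^{q'}_{\widetilde\nu}$ via the $A^2_\alpha$-pairing. Combining the assumed boundedness of $h^\alpha_f:A^p_\om\to A^q_\nu$ with this duality gives $\|h^\alpha_f(g)\|_{A^q_V}\lesssim \|h^\alpha_f\|_{A^p_\om\to A^q_\nu}\|g\|_{A^p_\om}$. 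From here the Rademacher--Khinchine argument on the atoms from \cite[Theorem~1]{PRS-2021}, followed by the second atomic decomposition via \cite[Theorem~2]{PRS-2021} applied in the dual space $A^{s'}_U$ with $U$ as in \eqref{def:U}, transfers the bound into an $A^s_W$-estimate on $F_{f,n}$, where $s=pq/(p-q)$ and $W$ is the weight defined in \eqref{def:W}. A concluding invocation of Lemma~\ref{Lemma:derivative} replaces $F_{f,n}$ by $f^{(n)}$ and produces the lower bound asserted by the theorem.

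The hard part is essentially bookkeeping: one must arrange $\alpha_0$ to be large enough that simultaneously $V$ is regular, the Bergman projection $P_\alpha:L^{q'}_{\widetilde\nu}\to A^{q'}_{\widetilde\nu}$ is bounded, and the Fubini-type series \eqref{fubinixxx} converges, the last requirement being exactly where \cite[Lemma~2.1]{PelSum14} and the kernel bound \eqref{fubinixxx2} enter. All three demands can be absorbed into a single $\alpha_0=\alpha_0(\om,\nu,p,q)$, after which the integer $N=N(\om,\nu,p,q)\in\N$ is selected as in the proof of Theorem~\ref{theorem} so that $W$ is regular and the atomic-decomposition dualization closes. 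The independence of the condition \eqref{appendix-eq} of~$\alpha$ then falls out of the proof itself, since the intermediate $\alpha$-dependent quantities are matched and cancelled at each step.
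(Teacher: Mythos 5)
Your sufficiency argument is sound in outline: with $\eta_\alpha(z)=(\alpha+1)(1-|z|^2)^\alpha$ one has $\widehat{\eta_\alpha}(z)\asymp(1-|z|)^{\alpha+1}$, so \eqref{Ieq:newweight1} gives $\mu^n_{\eta_\alpha}(z)\asymp(1-|z|)^{\alpha+n}$ (not $(1-|z|)^{\alpha+n-1}$ as you wrote), and with that exponent the powers of $(1-|z|)^{\alpha}$ coming from the boost and from the two kernel integrals do cancel, leaving exactly the weight of \eqref{appendix-eq}. The problem is in the necessity, and it is not bookkeeping: the step in which you keep the weight $V(z)=\bigl((1-|z|^2)^{\alpha}/\widetilde{\nu}(z)\bigr)^{q}\widetilde{\nu}(z)$ is incompatible with the fact that your testing operator is $h^\alpha_f$ rather than $S^{\nu,\alpha}_f$. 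In the two-weight proof the operator $S^{\nu,\alpha}_f$ of \eqref{new-operator} retains the measure $\nu\,dA$, so the reproducing step produces $\int F_{f,n}(u)\mu^n_\nu(u)(1-\overline{\z_{j,l}^{k}}u)^{-(M+\alpha+2)}\,dA(u)$ with $\mu^n_\nu\asymp\widehat{\nu}(1-|\cdot|)^{n-1}$, and the factor $(1-|z|)^{\alpha q}\widetilde{\nu}^{-q}$ hidden in $V$ exactly compensates the ratio $\mu^n_\nu/\mu^n_{\eta_\alpha}\asymp\widehat{\nu}(1-|\cdot|)^{-(\alpha+1)}$. In your setting the measure is $dA_\alpha$, the reproducing step yields $\mu^n_{\eta_\alpha}$ with no factor $\widehat{\nu}$, and nothing compensates $V$. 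Writing $s=\frac{pq}{p-q}$, the Khinchine--dualization chain you describe therefore produces only
\begin{equation*}
\int_\D\left|f^{(n)}(z)\right|^{s}(1-|z|)^{ns}\left(\frac{(1-|z|)^{\alpha+1}}{\widehat{\nu}(z)}\right)^{s}\left(\frac{\widehat{\nu}(z)}{\whw(z)}\right)^{\frac{s}{q}}\frac{\whw(z)}{1-|z|}\,dA(z)\lesssim\|h^{\alpha}_f\|^{s}_{A^p_\om\to A^q_\nu},
\end{equation*}
which is strictly weaker than \eqref{appendix-eq}, since $(1-|z|)^{\alpha+1}/\widehat{\nu}(z)\to0$ as $|z|\to1^-$ once $\alpha$ is large (and $\alpha$ must be large for the other steps). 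Equivalently, the inequality $\|h^\alpha_f(g)\|_{A^q_V}\lesssim\|h^\alpha_f(g)\|_{A^q_{\widetilde{\nu}}}$ is a genuine loss because $A^q_{\widetilde{\nu}}\subsetneq A^q_V$ for large $\alpha$. A second, related mismatch: the duality $(A^{s'}_U)^\star\simeq A^s_W$ with $U$ as in \eqref{def:U} holds via the $A^2_{\mu^n_\nu}$-pairing, whereas the integrals you control carry the weight $\mu^n_{\eta_\alpha}$, so the closing duality step does not apply as you state it.

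The repair is to drop the $V$-duality altogether. The hypothesis already gives $\|h^\alpha_f(F)\|_{A^q_{\widetilde{\nu}}}\asymp\|h^\alpha_f(F)\|_{A^q_{\nu}}\lesssim\|h^\alpha_f\|_{A^p_\om\to A^q_\nu}\|F\|_{A^p_\om}$, and $\widetilde{\nu}$ is regular for $\nu\in\DDD$, so the Rademacher--Khinchine and subharmonicity steps run directly with $\widetilde{\nu}$ in place of $V$; in the final step one must use the dual weight of $W$ with respect to the $A^2_{\mu^n_{\eta_\alpha}}$-pairing, namely $U_\alpha=(\mu^n_{\eta_\alpha}/W)^{s'}W\asymp(1-|z|)^{\alpha s'}\bigl(\whw/\widehat{\nu}\bigr)^{s'/q}\widetilde{\om}^{\,1-s'}$, which is regular for $\alpha$ large. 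With these two replacements the coefficient produced by the Khinchine step and the coefficient produced by the atomic decomposition of $A^{s'}_{U_\alpha}$ both reduce to $(1-|z|)^{sM+1}\widehat{\nu}(z)^{s/q}\whw(z)^{-s/p}$, the chaining inequality becomes an asymptotic equality, and the proof closes with the $\alpha$-free condition \eqref{appendix-eq}.
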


The proof of Theorem~\ref{weak factorization-t1} also relies on the so-called Hankel type bilinear form. Following \cite{Pau-Zhao}, for $g,h\in H^{\infty}$, we define the Hankel type bilinear form $T_{f}^{\alpha}$, associated with a number $-1<\alpha<\infty$ and the symbol $f\in \mathcal{H}(\mathbb{D})$, by
	$$
	T_{f}^{\alpha}(g,h)=\langle gh,f\rangle_{A_{\alpha}^{2}}=\int_{\mathbb{D}}g(z)h(z)\overline{f(z)}dA_{\alpha}(z).
	$$
Since $H^\infty$ is dense in each weighted Bergman space induced by a radial weight, $T_{f}^{\alpha}$ is densely defined on $A_{\omega}^{p_{1}}\times A_{\nu}^{p_{2}}$ for each $0<p_{1},p_{2}<\infty$ and $\omega,\nu\in\mathcal{D}$. We say that $T_{f}^{\alpha}:A_{\omega}^{p_{1}}\times A_{\nu}^{p_{2}}\to\mathbb{C}$ is bounded if there exists an absolute constant $C>0$ such that
	$$
	|T_{f}^{\alpha}(g,h)|\leq C\|g\|_{A_{\omega}^{p_{1}}}\|h\|_{A_{\nu}^{p_{2}}},\quad g\in A_{\omega}^{p_{1}}, \quad h\in A_{\nu}^{p_{2}}.
	$$
Moreover,
$$
\|T_{f}^{\alpha}\|_{A_{\omega}^{p_{1}}\times A_{\nu}^{p_{2}}\to \mathbb{C}}
=\sup_{\{g,h:~\|g\|_{A_{\omega}^{p_{1}}}=\|h\|_{A_{\nu}^{p_{2}}}=1\}}|T_{f}^{\alpha}(g,h)|.
$$

The following result shows a close relationship between the boundedness of Hankel type bilinear forms and small Hankel operators. This connection plays a crucial role in the proof of Theorem~\ref{weak factorization-t1}.

\begin{proposition}\label{weak factorization-pro2}
Let $1<p_{2}'<p_{1}<\infty$ and $\omega,\nu\in\mathcal{D}$. Then there exists an $\alpha_{0}=\alpha_{0}(\om,\nu,p_{1},p_{2})>0$ such that, for each $\alpha\ge\alpha_{0}$
and $f\in A_{\alpha\log}^{1}$, the following statements are equivalent:
\begin{itemize}
\item[(i)] The Hankel type bilinear form $T_{f}^{\alpha}:A_{\omega}^{p_{1}}\times A_{\nu}^{p_{2}}\to\mathbb{C}$ is bounded;
\item[(ii)] The small Hankel operator $h_{f}^{\alpha}: A_{\omega}^{p_1}\to A_{\lambda}^{p_{2}'}$ is bounded;
\item[(iii)] $f\in A_{\sigma}^{\frac{p_{1}p_{2}'}{p_{1}-p_{2}'}}$, where
	$$
	\lambda(z)=\left(\frac{(1-|z|^{2})^{\alpha}}{\widetilde{\nu}(z)}\right)^{p_{2}'}\widetilde{\nu}(z),\quad
	\sigma(z)=\left(\frac{\widetilde{\lambda}(z)}{\widetilde{\omega}(z)}\right)^{\frac{p_{1}}{p_{1}-p_{2}'}}\widetilde{\omega}(z),\quad z\in\mathbb{D}.
	$$
\end{itemize}
Moreover,
	$$
	\|T_{f}^{\alpha}\|_{A_{\omega}^{p_{1}}\times A_{\nu}^{p_{2}}\to\mathbb{C}}\asymp\|h_{f}^{\alpha}\|_{A_{\omega}^{p_{1}}\to A_{\lambda}^{p_{2}'}}
	\asymp\|f\|_{A_{\sigma}^{\frac{p_{1}p_{2}'}{p_{1}-p_{2}'}}},
	$$
for each fixed $\alpha\geq\alpha_{0}$.
\end{proposition}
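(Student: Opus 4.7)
\medskip

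\noindent\emph{Proof plan.} The plan is to establish the chain (i)~$\Leftrightarrow$~(ii)~$\Leftrightarrow$~(iii) by first exploiting the self-adjointness of $P_\alpha$ together with a duality relation to convert the bilinear form into a Hankel operator into a $\lambda$-valued target space, and then applying Theorem~\ref{theorem-three weights-1} together with the Littlewood-Paley formula to identify the resulting integral condition as membership in $A^{s}_\sigma$ with $s=\frac{p_1p_2'}{p_1-p_2'}$.

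For (i)~$\Leftrightarrow$~(ii), I would first observe that for $g,h\in H^\infty$ the self-adjointness of $P_\alpha$ on $L^2_\alpha$ and the analyticity of $h$ give
	$$
	\langle h_f^\alpha(g),h\rangle_{A^2_\alpha}=\langle P_\alpha(f\overline{g}),h\rangle_{L^2_\alpha}=\langle f\overline{g},h\rangle_{L^2_\alpha}=\overline{T_f^\alpha(g,h)}.
	$$
Next I would invoke the duality $(A^{p_2}_\nu)^\star\simeq A^{p_2'}_\lambda$ under the $A^2_\alpha$-pairing; this is the standard construction of the dual weight via \cite[Theorem~6]{PR2014}, which is valid provided $\alpha\ge\alpha_0(\nu,p_2)$ is large enough so that $\lambda\in\RR$ (and in particular that $P_\alpha:L^{p_2'}_{\widetilde{\nu}}\to A^{p_2'}_{\widetilde{\nu}}$ is bounded, exactly as used in the proof of Theorem~\ref{theorem}). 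Taking the supremum over unit balls and using density of $H^\infty$ in weighted Bergman spaces induced by radial weights then yields
	$$
	\|T_f^\alpha\|_{A^{p_1}_\omega\times A^{p_2}_\nu\to\C}\asymp\|h_f^\alpha\|_{A^{p_1}_\omega\to A^{p_2'}_\lambda},
	$$
which gives both the equivalence (i)~$\Leftrightarrow$~(ii) and the first stated norm asymptotic.

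For (ii)~$\Leftrightarrow$~(iii), I would apply Theorem~\ref{theorem-three weights-1} with source weight $\omega$, target weight $\lambda$, and exponents $(p,q)=(p_1,p_2')$, which is permitted because $1<p_2'<p_1<\infty$ by hypothesis and $\lambda\in\RR\subset\DDD$ for $\alpha\ge\alpha_0$. That theorem converts the boundedness in (ii) into the integrability, for all sufficiently large $n$, of
	$$
	|f^{(n)}(z)|^{s}(1-|z|)^{ns}\left(\frac{\widehat{\lambda}(z)}{\whw(z)}\right)^{\frac{p_1}{p_1-p_2'}}\frac{\whw(z)}{1-|z|},
	$$
together with the corresponding norm asymptotic up to the additive terms $\sum_{j=0}^{n-1}|f^{(j)}(0)|^s$. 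Using $\widetilde{\lambda}/\widetilde{\omega}=\widehat{\lambda}/\whw$, the above integrand equals $|f^{(n)}(z)|^s(1-|z|)^{ns}\sigma(z)$ with $\sigma$ as defined in the statement. Once I verify $\sigma\in\DDD$ for $\alpha\ge\alpha_0$ (an exercise in the same spirit as Lemma~\ref{le:regularity}, built from the two-sided estimates \eqref{Eq:Dd-characterization} and \cite[Lemma~2.1]{PelSum14} applied to the doubling weights $\omega$ and $\lambda$), the Littlewood-Paley formula \eqref{littlewoodpaley} identifies the integral with $\|f\|^s_{A^s_\sigma}$ modulo the additive boundary terms, completing both the equivalence (ii)~$\Leftrightarrow$~(iii) and the last norm asymptotic.

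The main obstacle here is the verification that $\sigma\in\DDD$ (equivalently, that $\sigma\in\RR$) for $\alpha$ large enough, because the Littlewood-Paley identification of the integral condition with $\|f\|_{A^s_\sigma}$ rests on this. Once this regularity is secured, the argument is a clean concatenation of self-adjointness plus duality (for the first equivalence) with Theorem~\ref{theorem-three weights-1} plus Littlewood-Paley (for the second), and the choice of $\alpha_0=\alpha_0(\omega,\nu,p_1,p_2)$ is simply taken to be large enough to ensure all three of $\lambda\in\RR$, the duality $(A^{p_2}_\nu)^\star\simeq A^{p_2'}_\lambda$, and $\sigma\in\DDD$.
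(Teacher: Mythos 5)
Your proposal is correct and follows essentially the same route as the paper's proof: the identity $\langle h_f^\alpha(g),h\rangle_{A_\alpha^2}=\overline{T_f^\alpha(g,h)}$ obtained from the self-adjointness of $P_\alpha$, combined with the duality $(A^{p_2}_{\widetilde{\nu}})^\star\simeq A^{p_2'}_\lambda$ via the $A_\alpha^2$-pairing, gives (i)$\Leftrightarrow$(ii), while Theorem~\ref{theorem-three weights-1} applied with $(p,q)=(p_1,p_2')$ and target weight $\lambda$, together with the regularity of $\sigma$ and the Littlewood--Paley formula (equivalently, the observation that one may take $n=0$ when $\sigma\in\DDD$), gives (ii)$\Leftrightarrow$(iii). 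The paper's proof is exactly this concatenation, so no further comparison is needed.
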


\begin{proof}
It is easy to see that there exists an $\alpha_1=\alpha_1(\omega,\nu,p_{1},p_{2})>0$ such that $\lambda,\sigma\in\mathcal{R}$ for all $\alpha\ge\alpha_1$. Therefore Theorem~\ref{theorem-three weights-1} 
 shows that, there exists an $\alpha_2=\alpha_2(\omega,\nu,p_{1},p_{2})\geq\alpha_{1}$ such that $h_{f}^{\alpha}: A_{\omega}^{p_1}\to A_{\lambda}^{p_{2}'}$ is bounded if and only if $f\in A_{\sigma}^{\frac{p_{1}p_{2}'}{p_{1}-p_{2}'}}$ for each $\alpha\geq \alpha_{2}$, and
	$$
	\|h_{f}^{\alpha}\|_{A_{\omega}^{p_{1}}\to A_{\lambda}^{p_{2}'}}
	\asymp\|f\|_{A_{\sigma}^{\frac{p_{1}p_{2}'}{p_{1}-p_{2}'}}},
	$$
for each fixed $\alpha\geq \alpha_{2}$.
Thus (ii) and (iii) are equivalent.

We next verify the equivalence between (i) and (ii). Assume first (ii). Fubini's theorem yields
	\begin{align*}
	T_{f}^{\alpha}(g,h)
	&=\langle gh,f\rangle_{A_{\alpha}^{2}}=\langle h,\overline{g}f\rangle_{L_{\alpha}^{2}}\\
	&=\langle P_{\alpha}(h),\overline{g}f\rangle_{L_{\alpha}^{2}}
	=\langle h,P_{\alpha}(\overline{g}f)\rangle_{A_{\alpha}^{2}}
	=\langle h,h_{f}^{\alpha}(g)\rangle_{A_{\alpha}^{2}},\quad g,h\in H^{\infty}.
	\end{align*}
This together with H\"{o}lder's inequality, the assumption (ii) and \cite[Proposition~5]{PRS1} give
	$$
	|T_{f}^{\alpha}(g,h)|
	=\left|\langle h,h_{f}^{\alpha}(g)\rangle_{A_{\alpha}^{2}}\right|\leq\|h_{f}^{\alpha}(g)\|_{A_{\lambda}^{p_{2}'}}\|h\|_{A_{\widetilde{\nu}}^{p_{2}}}
	\lesssim\|h_{f}^{\alpha}\|_{A_{\omega}^{p_{1}}\to A_{\lambda}^{p_{2}'}}\|g\|_{A_{\omega}^{p_{1}}}\|h\|_{A_{\nu}^{p_{2}}},
	\quad g,h\in H^{\infty},
	$$
and hence
 $T_{f}^{\alpha}:A_{\omega}^{p_{1}}\times A_{\nu}^{p_{2}}\to\mathbb{C}$ is bounded and
$\|T_{f}^{\alpha}\|_{A_{\omega}^{p_{1}}\times A_{\nu}^{p_{2}}\to \mathbb{C}}\lesssim\|h_{f}^{\alpha}\|_{A_{\omega}^{p_{1}}\to A_{\lambda}^{p_{2}'}}$.

Conversely, if (i) holds, then
	\begin{equation}\label{new-weak-eq1}
	\left|\langle h,h_{f}^{\alpha}(g)\rangle_{A_{\alpha}^{2}}\right|
	=|T_{f}^{\alpha}(g,h)|
	\le\|T_{f}^{\alpha}\|_{A_{\omega}^{p_{1}}\times A_{\nu}^{p_{2}}\to \mathbb{C}}\|g\|_{A_{\omega}^{p_{1}}}\|h\|_{A_{\nu}^{p_{2}}},\quad g,h\in H^{\infty}.
	\end{equation}
Since $\lambda\in\mathcal{R}$ for $\alpha\ge\alpha_1$, \cite[Theorem 3 and Theorem 6]{PR2014} show that the dual of $A_{\widetilde{\nu}}^{p_{2}}$ can be identified with $A_{\lambda}^{p_{2}'}$ via the $A_{\alpha}^{2}$-pairing.
This together with \eqref{new-weak-eq1} and \cite[Proposition 5]{PRS1} shows that $h_{f}^{\alpha}: A_{\omega}^{p_{1}}\to A_{\lambda}^{p_{2}'}$ is bounded and
$\|h_{f}^{\alpha}\|_{A_{\omega}^{p_{1}}\to A_{\lambda}^{p_{2}'}}\lesssim\|T_{f}^{\alpha}\|_{A_{\omega}^{p_{1}}\times A_{\nu}^{p_{2}}\to \mathbb{C}}$.

We finish the proof by noting that the number $\alpha_{0}=\alpha_{0}(\om,\nu,p_{1},p_{2})>0$ appearing in the statement equals to $\alpha_{2}$.
\end{proof}

With these preparations we are ready for the proof of Theorem~\ref{weak factorization-t1}.
\medskip

\noindent\emph{Proof of Theorem \ref{weak factorization-t1}.} On one hand, each $b\in A_{\omega}^{p_{1}}\odot A_{\nu}^{p_{2}}$ can be decomposed as
	$$
	b=\sum_{k}\varphi_{k}\psi_{k},\quad\{\varphi_{k}\}\subset A_{\omega}^{p_{1}},\quad\{\psi_{k}\}\subset A_{\nu}^{p_{2}},
	$$
from which Minkowski's inequality, H\"{o}lder's inequality and \cite[Proposition~5]{PRS1} yield
	$$
	\|b\|_{A_{\eta}^{q}}=\bigg(\int_{\mathbb{D}}\bigg|\sum_{k}\varphi_{k}\psi_{k}\bigg|^{q}\eta\,dA\bigg)^{\frac{1}{q}}
	\le\sum_{k}\bigg(\int_{\mathbb{D}}\big|\varphi_{k}\psi_{k}\big|^{q}\eta\,dA\bigg)^{\frac{1}{q}}
	\lesssim \sum_{k}\|\varphi_{k}\|_{A_{\omega}^{p_{1}}}\|\psi_{k}\|_{A_{\nu}^{p_{2}}}.
	$$
It follows that
	\begin{equation}\label{weak-factorization-111111}
	A_{\omega}^{p_{1}}\odot A_{\nu}^{p_{2}}\subset A_{\eta}^{q}
	\end{equation}
and
	\begin{equation}\label{weak-factorization-111111-}
	\|b\|_{A_{\eta}^{q}}\lesssim \|b\|_{A_{\omega}^{p_{1}}\odot A_{\nu}^{p_{2}}}.
	\end{equation}

On the other hand, if $b\in A_{\eta}^{q}$, then \cite[Theorem 2]{PRS-2021} implies that there exist an $M_{0}=M_{0}(q,\eta)>0$ and a sequence $\{\gamma(b)_{j,l}^{k}\}\in\ell^{q}$ such that
	\begin{equation}\label{weak factorization-eq5}
	b(z)=\sum_{j,l,k}\gamma(b)_{j,l}^{k}\frac{(1-|\zeta_{j,l}^{k}|^{2})^{M-\frac{1}{q}}\widehat{\eta}(\rho_{j})^{-\frac{1}{q}}}
	{(1-\overline{\zeta_{j,l}^{k}}z)^{M}},\quad z\in\mathbb{D},
	\end{equation}
for all $M\ge M_{0}$, where $\zeta_{j,l}^{k}$ and $\rho_{j}$ are the same as those appearing in the proof of Theorem~\ref{theorem}.
Rewrite $b=\sum_{j,l,k}\varphi_{j,l}^{k}\psi_{j,l}^{k}$,
where
	$$
	\varphi_{j,l}^{k}(z)
	=\frac{\gamma(b)_{j,l}^{k}\widehat{\eta}(\rho_{j})^{-\frac{1}{q}}}{\widehat{\nu}(\zeta_{j,l}^{k})^{-\frac{1}{p_{2}}}
	\widehat{\omega}(\zeta_{j,l}^{k})^{-\frac{1}{p_{1}}}}
	\frac{(1-|\zeta_{j,l}^{k}|^{2})^{\frac{M}{2}-\frac{1}{p_{1}}}\widehat{\omega}(\zeta_{j,l}^{k})^{-\frac{1}{p_{1}}}}
	{(1-\overline{\zeta_{j,l}^{k}}z)^{\frac{M}{2}}},
	$$
and
	$$
	\psi_{j,l}^{k}(z)=\frac{(1-|\zeta_{j,l}^{k}|^{2})^{\frac{M}{2}-\frac{1}{p_{2}}}\widehat{\nu}(\zeta_{j,l}^{k})^{-\frac{1}{p_{2}}}}
	{(1-\overline{\zeta_{j,l}^{k}}z)^{\frac{M}{2}}}.
	$$
Since $\widetilde{\eta}$ is a regular weight, we have
\begin{equation}\label{weak factorization-eq30}
	\widehat{\eta}(\rho_{j})^{-\frac{1}{q}}
	\asymp\widehat{\widetilde{\eta}}(\rho_{j})^{-\frac{1}{q}}
	\asymp\widehat{\widetilde{\eta}}(\zeta_{j,l}^{k})^{-\frac{1}{q}}
	\asymp\widetilde{\eta}(\zeta_{j,l}^{k})^{-\frac{1}{q}}(1-|\zeta_{j,l}^{k}|)^{-\frac{1}{q}}
	\asymp\widehat{\nu}(\zeta_{j,l}^{k})^{-\frac{1}{p_{2}}}\widehat{\omega}(\zeta_{j,l}^{k})^{-\frac{1}{p_{1}}}
	\end{equation}
for all $j, l, k$. Then \cite[Lemma~2.1]{PelSum14}, \eqref{weak factorization-eq30}, and the fact that $\{\gamma(b)_{j,l}^{k}\}\in\ell^{q}$ show that there exists an $M_{1}=M_{1}(p_{1},p_{2},\omega,\nu)\geq M_{0}$ such that $\varphi_{j,l}^{k}\in {A_{\omega}^{p_{1}}}$ and $\psi_{j,l}^{k}\in{A_{\nu}^{p_{2}}}$ for all $j,l,k$ and $M\ge M_{1}$.

To complete the proof, it remains to show that $\|b\|_{A_{\omega}^{p_{1}}\odot A_{\nu}^{p_{2}}}\lesssim\|b\|_{A_{\eta}^{q}}$ for $b\in A^q_\eta$. Note that the hypotheses $1<q,p_{1},p_{2}<\infty$ and $\frac{1}{p_{1}}+\frac{1}{p_{2}}=\frac{1}{q}$
imply
	\begin{equation}\label{weak-factorization----}
	1<p_{2}'<p_{1}<\infty.
	\end{equation}
Let $\alpha\in(-1,\infty)$ to be fixed later, and let
	$$
	\sigma(z)=\left(\frac{(1-|z|^{2})^{\alpha}}{\widetilde{\eta}(z)}\right)^{q'}\widetilde{\eta}(z),\quad z\in\mathbb{D}.
	$$
We claim that for each $F\in (A_{\omega}^{p_{1}}\odot A_{\nu}^{p_{2}})^{\star}$ there exists an $f_{F}\in A_{\sigma}^{q'}$ such that $F(b)=\langle b,f_{F}\rangle_{A_{\alpha}^{2}}$ for all $b\in H^\infty$, and $\|f_{F}\|_{A_{\sigma}^{q'}}\lesssim\|F\|$. The proof of this fact is postponed for a moment. It together with H\"{o}lder's inequality and \cite[Proposition 5]{PRS1} yields
	\begin{align*}
 \|b\|_{A_{\omega}^{p_{1}}\odot A_{\nu}^{p_{2}}}&=\sup_{\left\{F\in (A_{\omega}^{p_{1}}\odot A_{\nu}^{p_{2}})^{\star}:~ \|F\|=1\right\}}|F(b)|
	=\sup_{\left\{F\in (A_{\omega}^{p_{1}}\odot A_{\nu}^{p_{2}})^{\star}:~\|F\|=1\right\}}|\langle b,f_{F}\rangle_{A_{\alpha}^{2}}|\\
  &\lesssim\sup_{\{F\in (A_{\omega}^{p_{1}}\odot A_{\nu}^{p_{2}})^{\star}:~\|F\|=1\}}\|f_{F}\|_{A_{\sigma}^{q'}}\|b\|_{A_{\eta}^{q}}\\
  &\lesssim\|b\|_{A_{\eta}^{q}},\quad b\in H^\infty,
	\end{align*}
which completes the proof through the density argument.

It remains to establish the fact claimed above. For each $F\in(A_{\omega}^{p_{1}}\odot A_{\nu}^{p_{2}})^{\star}$, \cite[Proposition 5]{PRS1} yields
	$$
	|F(b)|
	\le\|F\|\|(1\cdot b)\|_{A_{\omega}^{p_{1}}\odot A_{\nu}^{p_{2}}}
	\le\|F\|\|1\|_{A_{\omega}^{p_{1}}}\|b\|_{A_{\nu}^{p_{2}}}
	\asymp\|F\|\|b\|_{A_{\widetilde{\nu}}^{p_{2}}}, \quad b\in A_{\widetilde{\nu}}^{p_{2}},
	$$
which means that $F$ is a bounded linear functional also on $A_{\widetilde{\nu}}^{p_{2}}$. Let now
	$$
	\lambda(z)=\bigg(\frac{(1-|z|^{2})^{\alpha}}{\widetilde{\nu}(z)}\bigg)^{p_{2}'}\widetilde{\nu}(z),\quad z\in\mathbb{D}.
	$$
Note that there exists an $\alpha_{0}=\alpha_{0}(\nu,p_{2})>0$ such that $\lambda$ is a regular weight for each $\alpha\geq \alpha_{0}$. Then \cite[Theorems 3 and~6]{PR2014} gives $(A^{p_{2}}_{\widetilde{\nu}})^{\star}\simeq A_{\lambda}^{p_{2}'}$ via the $A_{\alpha}^{2}$-pairing. Therefore there exists an $f=f_{F}\in A_{\lambda}^{p_{2}'}\subset A_{\alpha\log}^{1}$ such that
$F(b)=\langle b,f\rangle_{A_{\alpha}^{2}}$ for all $b\in A_{\widetilde{\nu}}^{p_{2}}$. Observe that this does not quite give us what we want because $A_{\lambda}^{p_{2}'}\not\subset A_{\sigma}^{q'}$ due to $q'>p_2'$ as is seen by applying \cite[Theorem~2]{LiuRattya}: after routine calculations $\sigma(\Delta(z,r))\lesssim\left(\lambda(\Delta(z,r))\right)^\frac{q'}{p_2'}$ reduces to the inequality $p_2'>q'$. However, it allows us to deduce
	$$
	|T_{f}^{\alpha}(g,h)|
	=|\langle gh,f\rangle_{A_{\alpha}^{2}}|
	=|F(gh)|\le\|F\|\|gh\|_{A_{\omega}^{p_{1}}\odot A_{\nu}^{p_{2}}}
	\le\|F\|\|g\|_{A_{\omega}^{p_{1}}}\|h\|_{A_{\nu}^{p_{2}}},\quad g,h\in H^{\infty},
	$$
which further implies that the Hankel type bilinear form $T_{f}^{\alpha}:A_{\omega}^{p_{1}}\times A_{\nu}^{p_{2}}\to\mathbb{C}$ is bounded with
$\|T_{f}^{\alpha}\|_{A_{\omega}^{p_{1}}\times A_{\nu}^{p_{2}}\to\mathbb{C}}\leq\|F\|$ for each $\alpha\geq \alpha_{0}$. This together with Proposition~\ref{weak factorization-pro2} and \eqref{weak-factorization----} imply the existence of an $\alpha_{1}=\alpha_{1}(\omega,\nu,p_{1},p_{2})\ge\alpha_{0}$ such that $f\in A_{\sigma}^{q'}$ with
	\[
	\|f\|_{A_{\sigma}^{q'}}\asymp\|T_{f}^{\alpha}\|_{A_{\omega}^{p_{1}}\times A_{\nu}^{p_{2}}\to \mathbb{C}}\leq\|F\|
	\]
for each $\alpha\geq \alpha_{1}$. Thus $F(b)=\langle b,f\rangle_{A_{\alpha}^{2}}$ for all $b\in H^\infty$, and $\|f\|_{A_{\sigma}^{q'}}\lesssim\|F\|$.
\hfill$\Box$

\section{Proof of Theorem~\ref{theorem1}}

In the proof of Theorem~\ref{theorem} we used the BLT-theorem \cite[Theorem~I.7]{ReedSimon} and the density of $H^\infty$ in $A^p_\om$ to prove the sufficiency. In the other theorems, excepting Theorem~\ref{theorem2}, $h^\nu_f$ is operating from $A^p_\om$ to $A^q_\nu$, but either $p$ or $q$ might be strictly less than 1, and then the space induced is not normed and then neither Banach. Therefore we cannot appeal to the BLT-theorem as such. However, $A^p_\om$ is a quasinormed space equipped with $\|\cdot\|_{A_{\omega}^{p}}$ in the case $0<p<1$, and the linear space $A^p_\om$ is a complete metric space with the metric defined by $d(f,g)=\|f-g\|_{A_{\omega}^{p}}^p$, see, for instance, \cite[Appendix~A]{Pav}. Now a careful inspection of the proof of the BLT-theorem given in \cite{ReedSimon}, with minor modifications, shows that the statement is valid for $h^\nu_f:A^p_\om\to A^q_\nu$ on the full range of $0<p,q<\infty$ and $\omega,\nu\in\mathcal{D}$. Hence we may use the BLT-theorem in any case in our setting of Bergman spaces, and we will still call it the density argument as usual. This simplifies certain steps in our proofs because we do not have to worry much about the convergence of integrals while using Fubini's theorem. However, it is worth underlining here that one may avoid the use of the BLT-theorem by directly verifying the validity of the formulas where the order of integration is changed. This can be done in each case by relying on the corresponding hypothesis on the symbol $f$ and performing somewhat tedious calculations.

If $0<p\le q<\infty$ and $\om\in\DDD$, then \cite[Theorem~2]{LiuRattya} states that there exists an $r_0=r_0(\om)\in(0,1)$ such that a positive Borel measure $\mu$ on $\D$ is a $q$-Carleson measure for $A^p_\om$ if and only if
	\begin{equation}\label{tvtvt}
	\sup _{z \in \mathbb{D}} \frac{\mu(\Delta(z, r))}{(\omega(\Delta(z, r)))^{\frac{q}{p}}}<\infty
	\end{equation}
for some (equivalently for all) $r\in(r_0,1)$. We will use this result to prove the sufficiency in Theorem~\ref{theorem1}.

\medskip

\noindent\emph{Proof of sufficiency}. In the proof of the sufficiency in Theorem~\ref{theorem} we established the estimate
	\begin{align*}
	\|h^\nu_f(g)\|^q_{A^q_\nu}\lesssim\int_\D |g(\z)|^q\left|\frac{d^n}{d\z^n}(\z^nf(\z))\right|^q(1-|\z|)^{nq}\widetilde{\nu}(\z)\,dA(\z)=\int_\D |g(\z)|^q\,d\eta(\z),\quad g\in H^\infty,
	\end{align*}
where
	$$
	d\eta(\z)=d\eta^n_{f,\nu,q}(\z)=\left|\frac{d^n}{d\z^n}(\z^nf(\z))\right|^q(1-|\z|)^{nq}\widetilde{\nu}(\z)\,dA(\z)
	$$
and $n\in\N\cup\{0\}$ is arbitrarily fixed. Therefore to show that \eqref{Eq:theorem2} is a sufficient condition for $h^\nu_f:A^p_\om\rightarrow A^{q}_{\nu}$ to be bounded, it suffices to show that $\eta$ is a $q$-Carleson measure for $A^p_\om$, provided $n$ is sufficiently large. By \cite[Theorem~2]{LiuRattya} this is equivalent to \eqref{tvtvt} with $\mu=\eta$. To see this, we first observe that since $\om\in\DDD$, \cite[Lemma~2.1]{PelSum14} and \eqref{Eq:Dd-characterization} show that there exists an $r_0=r_0(\om)\in(0,1)$ such that $\om(\Delta(z,r))\asymp\widehat{\om}(z)(1-|z|)$ for all $z\in\D$, provided $r\in[r_0,1)$ is fixed. This asymptotic equality together with the assumption \eqref{Eq:theorem2} and \cite[Lemma~2.1]{PelSum14} yields
	\begin{align*}
	\frac{\eta(\Delta(z,r))}{\om(\Delta(z,r))^{\frac qp}}
	&\asymp\frac{\int_{\Delta(z,r)}\left|\frac{d^n}{d\z^n}(\z^nf(\z))\right|^q(1-|\z|)^{nq}\widetilde{\nu}(\z)\,dA(\z)}
	{\left(\widehat{\om}(z)(1-|z|)\right)^{\frac qp}}\\
	&\lesssim\frac{\int_{\Delta(z,r)}\widehat{\om}(\z)^{\frac qp}(1-|\z|)^{\frac{q}{p}-2}\,dA(\z)}
{\left(\widehat{\om}(z)(1-|z|)\right)^{\frac qp}}\asymp1,\quad z\in\D.
	\end{align*}
Then the density argument shows that $h^\nu_f:A^p_\om\to A^q_\nu$ is bounded, and
	$$
	\|h^\nu_{f}\|_{A^p_\om\rightarrow A^q_\nu}
	\lesssim\sup_{\z\in\D}\left|\frac{d^n}{d\z^n}(\z^nf(\z))\right|(1-|\z|)^n
	\frac{\left(\widehat{\nu}(\z)(1-|\z|)\right)^{\frac{1}{q}}}{\left(\whw(\z)(1-|\z|)\right)^{\frac{1}{p}}}.
	$$
An application of Lemma~\ref{Lemma:derivative2} completes the proof, provided $n\in\N$ is sufficiently large, say $n\ge N_1=N_1(\om,\nu,p,q)$. \hfill$\Box$

\medskip


For the necessity we will need the following kernel estimate, the proof of which can be found in~\cite{PR2017}. This estimate will also play an important role in the proof of Theorem~\ref{theorem3}.

\begin{lemma}\label{weightedkernel}
Let $2\le p<\infty$, $k\in\N\cup\{0\}$ and $\nu\in\DD$, and let $\om$ be a radial weight. Then
	\begin{equation*}
  \int_{\D}|(1-\overline{z}\z)^k B^\nu_z(\z)|^p\om(\z)\,dA(\z)
	\lesssim\int_0^{|z|}\frac{\whw(t)}{\widehat{\nu}(t)^p(1-t)^{p(1-k)}}\,dt+1,\quad z\in\D.
	\end{equation*}
\end{lemma}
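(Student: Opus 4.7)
The proof parallels the one of \eqref{kernel} in \cite{PR2014}, with the $k$-th derivative there replaced by the smoothing factor $(1-\overline{z}\zeta)^k$ here. By radiality of $\om$ and $\nu$, we may assume $z=r\in[0,1)$ and set $F(\zeta):=(1-r\zeta)^k B^\nu_r(\zeta)$. Polar coordinates reduce the problem to estimating
\[
I=2\int_0^1 M_p^p(\rho,F)\,\om(\rho)\,\rho\,d\rho.
\]

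The heart of the argument is a bound on the Taylor coefficients of $F$. Multiplying the Maclaurin series $B^\nu_r(\zeta)=\sum_m(r\zeta)^m/(2\nu_{2m+1})$ by $(1-r\zeta)^k$ and gathering powers of $\zeta$ gives $F(\zeta)=\sum_n a_n(r)\zeta^n$ with
\[
a_n(r)=\frac{r^n}{2}\sum_{j=0}^{\min(n,k)}(-1)^j\binom{k}{j}\frac{1}{\nu_{2(n-j)+1}},
\]
so $a_n(r)$ is, up to factors, the $k$-th forward difference of $\{1/\nu_{2m+1}\}$ at $m=n$. Combining the moment asymptotic $\nu_{2m+1}\asymp\widehat{\nu}(1-1/m)$ valid for $\nu\in\DD$ (see \cite[Lemma~2.1]{PelSum14}) with a one-step bound $|\nu_{2m+1}^{-1}-\nu_{2m-1}^{-1}|\lesssim m^{-1}\nu_{2m+1}^{-1}$ derived from the doubling of $\widehat{\nu}$, and iterating $k$ times, one obtains
\[
|a_n(r)|\lesssim \frac{r^n(1-r_n)^k}{\widehat{\nu}(r_n)},\qquad r_n:=1-\tfrac{1}{n},\quad n\ge k,
\]
while the first $k$ coefficients are uniformly bounded in $r$ and contribute only to the additive $1$.

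With this coefficient estimate, $M_p^p(\rho,F)$ for $p\ge 2$ is controlled via the H\"older interpolation $M_p^p(\rho,F)\le M_\infty^{p-2}(\rho,F)\cdot M_2^2(\rho,F)$, where $M_2^2(\rho,F)=\sum_n|a_n(r)|^2\rho^{2n}$ by Parseval, and $M_\infty(\rho,F)$ is bounded using the pointwise estimate $|B^\nu_r(\zeta)|\lesssim 1/(\widehat{\nu}(t)(1-t))$ with $t=\max\{r,|\zeta|\}$ together with the cancellation in $(1-r\zeta)^k$ at the nearest-approach point on $|\zeta|=\rho$. Substituting back into the polar expression for $I$, performing the $\rho$-integration against $\om(\rho)\rho\,d\rho$ via integration by parts $d\widehat{\om}(\rho)=-\om(\rho)\,d\rho$ to replace $\om$ by $\widehat{\om}$, and finally converting the resulting sum into the integral on the right-hand side by the change of variables $t=r_n$, yields the stated bound, with the low-index and boundary contributions producing the additive $1$.

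The principal obstacle is the $k$-fold discrete-difference bound for $\{1/\nu_{2m+1}\}$: since only the doubling of $\widehat{\nu}$ is available (with no pointwise smoothness of $\nu$), the cancellation must be extracted by an induction that carefully controls the error terms at each step. Once the coefficient bound is in place, the remaining ingredients---interpolation, Parseval, and polar-coordinate integration by parts---are variations on standard arguments in the theory of Bergman kernels for doubling weights.
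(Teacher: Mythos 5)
Note first that the paper contains no proof of Lemma~\ref{weightedkernel} to compare against: it is quoted verbatim from \cite{PR2017}. Your overall architecture -- reduce to $z=r$, identify the Maclaurin coefficients of $(1-r\zeta)^kB^\nu_r(\zeta)$ as $k$-th finite differences of the reciprocal moments, estimate $M_p^p\le M_\infty^{p-2}M_2^2$ with Parseval for the $M_2$ factor, and integrate in polar coordinates -- is the standard and appropriate skeleton for estimates of this type, and your target coefficient bound $|a_n(r)|\lesssim r^nn^{-k}/\widehat{\nu}(1-1/n)$ is correct. The problem is that the mechanism you propose for the one step you rightly call the crux does not work. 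Iterating the one-step bound $|\nu_{2m+1}^{-1}-\nu_{2m-1}^{-1}|\lesssim m^{-1}\nu_{2m+1}^{-1}$ cannot produce $|\Delta^k(\nu_{2\cdot+1}^{-1})(n)|\lesssim n^{-k}\nu_{2n+1}^{-1}$: the $k$-th difference is a difference of two $(k-1)$-th differences, and the triangle inequality applied to the inductive bound returns $n^{-(k-1)}\nu_{2n+1}^{-1}$ again; the extra factor $n^{-1}$ comes precisely from cancellation between those two terms, which the one-step bound is blind to. A workable route is to first control the $k$-th difference of the moments themselves via the exact identity
\[
\sum_{j=0}^{k}(-1)^j\binom{k}{j}\nu_{2(n-j)+1}=(-1)^k\int_0^1 r^{2(n-k)+1}(1-r^2)^k\nu(r)\,dr\lesssim n^{-k}\widehat{\nu}(1-1/n)\asymp n^{-k}\nu_{2n+1},
\]
the last step by a dyadic decomposition of $[0,1)$ using $\nu\in\DD$, and then transfer this to the reciprocal sequence by the discrete quotient rule (every term in the expansion of $\Delta^k(1/f)$ carries total difference order $k$ in its numerator factors), or equivalently by the mean value theorem for finite differences applied to $x\mapsto 1/\nu_x$, whose derivatives are controlled because $\nu_x$ is completely monotone with $|\nu_x^{(j)}|\lesssim x^{-j}\nu_x$ for $\nu\in\DD$. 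As written, the central estimate of your proof is asserted rather than proved.

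A second, smaller defect concerns $M_\infty(\rho,F)$. The uniform bound $|B^\nu_r(\zeta)|\lesssim(\widehat{\nu}(t)(1-t))^{-1}$ together with ``cancellation of $(1-r\zeta)^k$ at the nearest-approach point'' gives nothing: on the circle $|\zeta|=\rho$ the factor $|1-r\zeta|^k$ is of order $1$ near the antipodal point, so the product of the two uniform bounds yields no gain of $(1-t)^k$, and without that gain the interpolation $M_\infty^{p-2}M_2^2$ misses the stated right-hand side by a power of $(1-t)^{k(p-2)}$. Either an angular decay estimate for $B^\nu_r$ must be invoked, or -- more in keeping with your own scheme -- one should simply read off $M_\infty(\rho,F)\le\sum_n|a_n(r)|\rho^n\lesssim(1-r\rho)^{k-1}\widehat{\nu}(r\rho)^{-1}+1$ from the coefficient bound and the $\DD$ property of $\nu$. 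This part is repairable from ingredients you already have; the $k$-th difference estimate is the genuine missing piece.
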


\medskip

We can now prove the necessity.

\medskip

\noindent\emph{Proof of necessity}. Assume that $h^\nu_f:A^p_\om\rightarrow A^{q}_{\nu}$ is bounded. Since $f\in A^1_{\nu_{\log}}$ by the hypothesis, \cite[Theorem~1]{PR2014} yields
	\begin{equation*}
	\begin{split}
	\int_\D|f(\zeta)||g(\zeta)|\left(\int_\D|B^\nu_z(\zeta)||h(z)|\nu(z)\,dA(z)\right)\nu(\zeta)\,dA(\zeta)
	\lesssim\|g\|_{H^\infty}\|h\|_{H^\infty}\|f\|_{A^1_{\nu_{\log}}}<\infty
	\end{split}
	\end{equation*}
for all $g,h\in H^\infty$. Therefore Fubini's theorem and the boundedness of $h^\nu_f:A^p_\om\rightarrow A^{q}_{\nu}$ imply
	\begin{equation*}
	\begin{split}
  \left|\int_{\D}f(\z)\overline{g(\z)}\overline{h(\z)}\nu(\z)\,dA(\z)\right|
	&=\left|\left<h^\nu_{f}(g), h\right>_{A^2_\nu}\right|
	\le\|h^\nu_f(g)\|_{A^{q}_{\nu}}\|h\|_{A^{q'}_\nu}\\
	&\le\|h^\nu_f\|_{A^p_\om\rightarrow A^{q}_{\nu}}\|g\|_{A^p_\om}\|h\|_{A^{q'}_\nu},\quad g,h\in H^\infty.
	\end{split}
	\end{equation*}
Let $k\in\N$ to be fixed later. The above inequality with the choices $g(\z)=g_{z,k}(\z)=(1-\overline{z}\z)^{-k}$ and $h(\z)=h_{z,k,n,\nu}(\z)=(1-\overline{z}\z)^{k}B^{\mu_\nu^n}_z(\z)$, together with Lemma~\ref{nthreproducing} gives
	\begin{equation}\label{testfg}
	\left|\frac{d^n}{dz^n}(z^nf(z))\right|
	\le\|h^\nu_f\|_{A^p_\om\rightarrow A^{q}_{\nu}} \|g_{z,k}\|_{A^p_\om}\|h_{z,k,n,\nu}\|_{A^{q'}_\nu},\quad z\in\D.
	\end{equation}
By \cite[Lemma~2.1]{PelSum14} we may fix $k=k(\om,p)\in\N$ large enough such that
	\begin{equation}\label{testf}
  \|g_{z,k}\|^p_{A^p_\om}
	\asymp\frac{\whw(z)}{(1-|z|)^{pk-1}},\quad z\in\D.
	\end{equation}
Further, we claim that there exists an $N_2=N_2(\om,\nu,p,q)\in\N$ such that, for all $n\ge N_2$, we have
	\begin{equation}\label{testg}
	\|h_{z,k,n,\nu}\|^{q'}_{A^{q'}_\nu}
	\lesssim\frac{1}{\widehat{\nu}(z)^{q'-1}(1-|z|)^{q'(n+1-k)-1}},\quad z\in\D,
	\end{equation}
the proof of which is postponed for a moment. By applying \eqref{testf} and \eqref{testg} to \eqref{testfg}, we obtain
	\begin{equation*}
	\left|\frac{d^n}{dz^n}(z^nf(z))\right|
	\lesssim\frac1{(1-|z|)^n}\frac{\left(\whw(z)(1-|z|)\right)^{\frac{1}{p}}}{\left(\widehat{\nu}(z)(1-|z|)\right)^{\frac{1}{q}}},\quad z\in\D.
	\end{equation*}
By choosing $N_3\ge N_2$ sufficiently large so that Lemma~\ref{Lemma:derivative2} can be applied, we finally deduce the desired lower bound for the operator norm, provided $n\ge N_3$.

To complete the proof it remains to establish \eqref{testg}. Let first $1<q\leq 2$, and recall that $\mu_\nu^n\in\DDD$ with $\widehat{\mu_\nu^n}(z)\asymp\widehat{\nu}(z)(1-|z|)^n$ for all $z\in\D$ by \eqref{Ieq:newweight}. Let $n\ge k$, where $k=k(\om,p)\in\N$ is that of \eqref{testf}, and apply Lemma~\ref{weightedkernel} to deduce
	\begin{equation*}
	\begin{split}
	\|h_{z,k,n,\nu}\|^{q'}_{A^{q'}_\nu}
	&\lesssim\int_0^{|z|}\frac{\widehat{\nu}(t)}{\widehat{\mu_\nu^n}(t)^{q'}(1-t)^{q'(1-k)}}\,dt+1
	\asymp\int_0^{|z|}	\frac{dt}{\widehat{\nu}(t)^{q'-1}(1-t)^{q'(n+1-k)}}+1\\
	&\lesssim\frac{1}{\widehat{\nu}(z)^{q'-1}(1-|z|)^{q'(n+1-k)-1}},\quad z\in\D,
	\end{split}	
	\end{equation*}
and thus \eqref{testg} is valid in this case. Let now $2<q<\infty$. By \cite[Lemma~2.1]{PelSum14} we may fix $j=j(\nu,q)\in\N$ large enough such that
	$$
	\int_\D\frac{\nu(\z)}{\left|1-\overline{z}\z\right|^{\frac{2q'j}{2-q'}}}\,dA(\z)
	\lesssim\frac{\widehat{\nu}(z)}{(1-|z|)^{\frac{2q'j}{2-q'}-1}},\quad z\in\D.
	$$
Let now $N_4=N_4(\om,\nu,p,q)=k+j$, and let $n\ge N_4$. Then H\"{o}lder's inequality, Lemma~\ref{weightedkernel} and \eqref{Ieq:newweight} imply
	\begin{align*}
  \|h_{z,k,n,\nu}\|^{q'}_{A^{q'}_\nu}
	&=\int_\D\left|(1-\overline{z}\z)^{k+j}B^{\mu_\nu^n}_z(\z)\right|^{q'}\frac{\nu(\z)}{|1-\overline{z}\z|^{ q'j}}\,dA(\z)\\
  &\le\left(\int_\D\left|(1-\overline{z}\z)^{k+j}B^{\mu_\nu^n}_z(\z)\right|^{2}\nu(\z)\,dA(\z)\right)^{\frac{q'}{2}}
	\cdot\left(\int_\D\frac{\nu(\z)}{\left|1-\overline{z}\z\right|^{\frac{2q'j}{2-q'}}}\,dA(\z)\right)^{\frac{2-q'}{2}}\\
  &\lesssim\left(\int_0^{|z|}\frac{\widehat{\nu}(t)}{\widehat{\mu_\nu^n}(t)^2(1-t)^{2(1-k-j)}}\,dt+1\right)^{\frac{q'}{2}}\cdot \left(\frac{\widehat{\nu}(z)}{(1-|z|)^{\frac{2q'j}{2-q'}-1}}\right)^{\frac{2-q'}{2}} \\
  &\asymp\left(\int_0^{|z|}\frac{dt}{\widehat{\nu}(t)(1-t)^{2(n+1-k-j)}}+1\right)^{\frac{q'}{2}}\cdot \frac{\widehat{\nu}(z)^{\frac{2-q'}{2}}}{(1-|z|)^{q' j-1+\frac{q'}{2}}}\\
  &\lesssim\left(\frac{1}{\widehat{\nu}(z)(1-|z|)^{2(n-k-j)+1}}\right)^{\frac{q'}{2}}\cdot \frac{\widehat{\nu}(z)^{\frac{2-q'}{2}}}{(1-|z|)^{q' j-1+\frac{q'}{2}}}\\
  &=\frac{1}{\widehat{\nu}(z)^{q'-1}(1-|z|)^{q'(n+1-k)-1}},\quad z\in\D,
  \end{align*}
and thus \eqref{testg} is valid in this case also. This completes the proof of the necessity. \hfill$\Box$

\medskip

We finish the proof of the theorem by noting that the number $N=N(\om,\nu,p,q)\in\N$ appearing in the statement obviously equals to $\max\{N_1,N_3,N_4\}$.

\section{Proof of Theorem~\ref{theorem2}}

We begin with showing that \eqref{Eq:theorem3} is a sufficient condition for $h_{f}^{\nu}:A^{p}_{\omega}\to A^{1}_{\nu}$ to be bounded.

\medskip

\noindent\emph{Proof of sufficiency}. First we notice that $(A^1_{\nu})^\star\simeq\B$ via the $A^2_\nu$-pairing
	$$
	\langle f,g\rangle_{A^2_\nu}=\lim_{r\to1^-}\int_\D f_r(z)\overline{g(z)}\nu(z)\,dA(z)
	$$
by \cite[Theorem~3]{PelRat2020} because $\nu\in\DDD$ by the hypothesis. We next aim for boosting the order of the derivative of $f$ in the quantity $\langle h_{f}^{\nu}(g),h\rangle_{A^{2}_{\nu}}$ with $g\in H^\infty$ and $h\in\B$. First observe that, by \cite[Theorem 1]{PR2014}, H\"older's inequality, the hypothesis \eqref{Eq:theorem3} and Lemma~\ref{Lemma:derivative}, there exists an $N_1=N_1(\om,\nu,p)\in\N$ such that
	\begin{equation*}
	\begin{split}
  &\int_\D\left|\frac{d^n}{d\z^n}(\z^nf(\z))\right||g(\z)|\mu^n_\nu(\z)
	\left(\int_\D|h(z)||B^\nu_\z(z)|\nu(z)\,dA(z)\right)dA(\z)\\
  &\lesssim\|g\|_{H^\infty}\|h\|_{\B}\int_\D\left|\frac{d^n}{d\z^n}(\z^nf(\z))\right|\mu^n_\nu(\z)
	\left(\int_\D|B^\nu_\z(z)|\log\frac{e}{1-|z|}\nu(z)\,dA(z)\right)\,dA(\z)\\
	&\lesssim\|g\|_{H^\infty}\|h\|_{\B}\int_\D\left|\frac{d^n}{d\z^n}(\z^nf(\z))\right|\mu^n_\nu(\z)
	\left(\int_0^{|\z|}\frac{\int_t^1\log\frac{e}{1-s}\nu(s)\,ds}{\widehat{\nu}(t)(1-t)}\,dt+1\right)\,dA(\z)\\
	&\lesssim\|g\|_{H^\infty}\|h\|_{\B}\int_\D\left|\frac{d^n}{d\z^n}(\z^nf(\z))\right|\mu^n_\nu(\z)
	\left(\log\frac{e}{1-|\zeta|}\right)^2\,dA(\z)\\
	&\lesssim\|g\|_{H^{\infty}}\|h\|_{\B}\left(\int_{\mathbb{D}}\left|\frac{d^n}{d\z^n}(\z^nf(\z))\right|^{p'}(1-|\z|^{2})^{np'}
	\left(\frac{\widehat{\nu}(\z)}{\widehat{\omega}(\z)}\right)^{p'}\widetilde{\omega}(\z)\,dA(\z)\right)^{\frac{1}{p'}}\\
	&\quad\cdot\left(\int_{\mathbb{D}}\left(\log\frac{e}{1-|\z|}\right)^{2p}\widetilde{\omega}(\z)\,dA(\z)\right)^{\frac{1}{p}}
	<\infty,\quad g\in H^\infty,\quad h\in\B,
  \end{split}
	\end{equation*}
for all $n\ge N_1$. Therefore, by \eqref{Id: newrepresentation} and Fubini's theorem, we have
	\begin{equation}\label{predualofbloch}
	\begin{split}
  \langle h^\nu_f(g),h\rangle_{A^2_\nu}
	&=\int_\D\left(\int_\D\left(\frac{d^n}{d\z^n}(\z^nf(\z))\right)\overline{g(\z)}\overline{B^\nu_z(\z)}\mu^n_\nu(\z)\,dA(\z)\right)
	\overline{h(z)}\nu(z)\,dA(z)\\
  &=\int_\D\left(\frac{d^n}{d\z^n}(\z^nf(\z))\right)\overline{g(\z)}\mu^n_\nu(\z)
	\left(\overline{\int_\D h(z)\overline{B^\nu_\z(z)}\nu(z)\,dA(z)}\right)\,dA(\z)\\
  &=\int_\D\left(\frac{d^n}{d\z^n}(\z^nf(\z))\right)\overline{g(\z)}\overline{h(\z)}\mu^n_\nu(\z)\,dA(\z),\quad g\in H^\infty,\quad h\in\B.
  \end{split}
	\end{equation}
Then \cite[Proposition~5]{PRS1}, \eqref{Ieq:newweight1}, H\"older's inequality, and the hypothesis \eqref{Eq:theorem3} yield
	\begin{align*}
	|\langle h_{f}^{\nu}(g),h\rangle_{A^{2}_{\nu}}|
	&=\left|\int_{\mathbb{D}}\frac{d^n}{dz^n}(z^nf(z))\overline{g(z)h(z)}\mu_{\nu}^{n}(z)dA(z)\right|\nonumber\\
	&\lesssim\left(\int_{\mathbb{D}}\left|h(z)\right|^{p'}\left|\frac{d^n}{dz^n}(z^nf(z))\right|^{p'}(1-|z|^{2})^{np'}
	\left(\frac{\widehat{\nu}(z)}{\widehat{\omega}(z)}\right)^{p'}\widetilde{\omega}(z)dA(z)\right)^{\frac{1}{p'}}
	\|g\|_{A_{\widetilde{\omega}}^{p}}\nonumber\\
	&\lesssim\|h\|_{\mathcal{B}}\|g\|_{A^{p}_{\omega}}, \quad g\in H^\infty,\quad h\in\B.
	\end{align*}
It follows from \cite[Theorem~3]{PelRat2020} and the density argument that $h_{f}^{\nu}:A^{p}_{\omega}\to A^{1}_{\nu}$ is bounded and
	$$
	\|h_{f}^{\nu}\|_{A^{p}_{\omega}\to A^{1}_{\nu}}^{p'}
	\lesssim\sup_{h\in\B\setminus\{0\}}\frac{\int_{\mathbb{D}}\left|h(z)\right|^{p'}\left|\frac{d^n}{dz^n}(z^nf(z))\right|^{p'}(1-|z|^{2})^{np'}
	\left(\frac{\widehat{\nu}(z)}{\widehat{\omega}(z)}\right)^{p'}\widetilde{\omega}(z)dA(z)}{\|h\|_{\mathcal{B}}^{p'}}.
	$$
Lemma~\ref{lemma:bloch-measure} now completes the proof of the sufficiency, provided $n\ge N_2=N_2(\om,\nu,p)\ge N_1$. \hfill$\Box$

\medskip

\noindent\emph{Proof of necessity}. Assume that $h_{f}^{\nu}:A^{p}_{\omega}\to A^{1}_{\nu}$ is bounded, and write
	$$
	\tau(z)=\tau_{\om,\nu,p}(z)=\left(\frac{\widehat{\nu}(z)}{\widehat{\omega}(z)}\right)^{p'}\widetilde{\omega}(z),\quad z\in\mathbb{D},
	$$
for short. We will show that
	\begin{equation}\label{aimoftheorem3}
	\int_{\mathbb{D}}|h(z)|^{p'}\left|\frac{d^n}{dz^n}(z^nf(z))\right|^{p'}\tau_{[np']}(z)\,dA(z)
	\lesssim\|h_{f}^{\nu}\|_{A^{p}_{\om}\to A^{1}_{\nu}}^{p'}\|h\|_{\mathcal{B}}^{p'},\quad h\in\mathcal{B},
	\end{equation}
provided $n$ is sufficiently large, depending on $\om$, $\nu$ and $p$, and fixed. Clearly, $\B\subset A^1_\nu$ because $\nu\in\DDD$ by the hypothesis. Hence Fubini's theorem and the hypothesis $f\in A^1_{\nu_{\log}}$ give
	\begin{equation}\label{jdjdjdjdj}
	\begin{split}
	\langle h_{f}^{\nu}(g),h\rangle_{A^{2}_{\nu}}
	&=\langle P_{\nu}(f\overline{g}),h\rangle_{A^{2}_{\nu}}
	=\langle f\overline{g},P_{\nu}(h)\rangle_{L^{2}_{\nu}}
	=\langle f\overline{g},h\rangle_{L^{2}_{\nu}}\\
	&=\langle f\overline{h},g\rangle_{L^{2}_{\nu}}
	=\langle f\overline{h},P_{\mu^{n}_{\nu}}(g)\rangle_{L^{2}_{\nu}}
	=\langle S^{\nu,\mu^{n}_{\nu}}_{f}(h),g\rangle_{A^{2}_{\mu^{n}_{\nu}}}, \quad g\in H^\infty,\quad h\in H^\infty,
	\end{split}
	\end{equation}
where
	$$
	S^{\nu,\mu^{n}_{\nu}}_{f}(h)(z)
	=\int_{\mathbb{D}}f(\zeta)\overline{h(\zeta)B_{z}^{\mu^{n}_{\nu}}(\zeta)}\nu(\zeta)\,dA(\zeta),
	\quad z\in\mathbb{D},\quad h\in H^\infty.
	$$
Therefore \cite[Theorem 3]{PelRat2020}, our assumption on the boundedness of $h_{f}^{\nu}:A^{p}_{\omega}\to A^{1}_{\nu}$, and \cite[Proposition~5]{PRS1} yield
	\begin{equation}\label{Ie:1-theorem3}
	\begin{split}
	|\langle S^{\nu,\mu^{n}_{\nu}}_{f}(h),g\rangle_{A^{2}_{\mu^{n}_{\nu}}}|
	&=|\langle h_{f}^{\nu}(g),h\rangle_{A^{2}_{\nu}}|
	\le\|h_{f}^{\nu}(g)\|_{A^{1}_{\nu}}\|h\|_{\mathcal{B}}\\
	&\lesssim\|h_{f}^{\nu}\|_{A^{p}_{\omega}\to A^{1}_{\nu}}\|g\|_{A^{p}_{\widetilde{\omega}}}\|h\|_{\mathcal{B}}, \quad g\in H^\infty,\quad h\in H^\infty.
	\end{split}
	\end{equation}
Since $\om,\nu\in\DDD$ by the hypotheses, routine calculations based on \cite[Lemma~2.1]{PelSum14}, \eqref{Eq:Dd-characterization}, \eqref{Ieq:newweight1} and \eqref{Ieq:newweight} now show that there exists an $N_3=N_3(\om,\nu,p)\in\N$ such that for each fixed $n\ge N_3$ we have
	\begin{equation}\label{aoao}
	\begin{split}
	\frac{\widehat{\widetilde{\omega}}(r)^\frac1p\left(\int_r^1t\left(\frac{\mu^n_\nu(t)}{\widetilde{\omega}(t)^\frac1p}\right)^{p'}\,dt\right)^\frac1{p'}}{\widehat{\mu^n_\nu}(r)}
	&\asymp\frac{\widehat{\omega}(r)^\frac1p\left(\int_r^1\left(\frac{\widehat{\nu}(t)(1-t)^{n-1+\frac1p}}{\widehat{\omega}(t)^\frac1p}\right)^{p'}\,dt\right)^\frac1{p'}}{\widehat{\nu}(r)(1-r)^{n}}\\
	&\asymp\frac{\widehat{\omega}(r)^\frac1p\frac{\widehat{\nu}(r)(1-r)^{n-1+\frac1p}}{\widehat{\omega}(r)^\frac1p}(1-r)^\frac1{p'}}{\widehat{\nu}(r)(1-r)^{n}}
	\asymp1,\quad r\to1^-.
	\end{split}
	\end{equation}
Therefore $P_{\mu_{\nu}^{n}}: L^{p}_{\widetilde{\omega}}\to A^{p}_{\widetilde{\omega}}$ is bounded by \cite[Theorem 13]{PelRat2020} for each such $n$. Further, the argument used in the proof of \cite[Theorem~6]{PR2014} further implies that $(A^{p'}_{\tau_{[n p']}})^{\star}\simeq A^{p}_{\widetilde{\omega}}$ via the $A^2_{\mu^n_\nu}$-pairing. By applying this duality relation in \eqref{Ie:1-theorem3}, we deduce that
	\begin{equation}\label{Ie:normofS}
	\|S^{\nu,\mu^{n}_{\nu}}_{f}(h)\|_{A^{p'}_{\tau_{[n p']}}}\lesssim\|h_{f}^{\nu}\|_{A^{p}_{\omega}\to A^{1}_{\nu}}\|h\|_{\mathcal{B}},\quad h\in H^\infty.
	\end{equation}
By choosing $h\equiv1$ in \eqref{Ie:normofS} and applying \eqref{id:generalizedreproducing} we obtain
	\begin{align}\label{add-1}
  \int_{\mathbb{D}}\bigg|\frac{d^n}{dz^n}(z^nf(z))\bigg|^{p'}\tau_{[n p']}(z)\,dA(z)
	&=\int_{\mathbb{D}}\left|\int_{\mathbb{D}}f(\zeta)\overline{B_{z}^{\mu_{\nu}^{n}}(\zeta)}\nu(\zeta)\,dA(\zeta)\right|^{p'}\tau_{[n p']}(z)\,dA(z)\nonumber\\
  &=\|S^{\nu,\mu^{n}_{\nu}}_{f}(1)\|_{A^{p'}_{\tau_{[np']}}}^{p'}
	\lesssim\|h_{f}^{\nu}\|_{A^{p}_{\omega}\to A^{1}_{\nu}}^{p'}.
	\end{align}
This shows, in particular, that the measure in \eqref{Eq:theorem3} is finite.
Let $r\in(0,1)$. Observe that
	\begin{align*}
	&\int_{\mathbb{D}}\left|\frac{d^n}{dz^n}(z^nf(z))\right|^{p'}|h_{r}(z)|^{p'}\tau_{[np']}(z)\,dA(z)
	\lesssim\int_{\mathbb{D}}|S^{\nu,\mu^{n}_{\nu}}_{f}(h_{r})(z)|^{p'}\tau_{[np']}(z)\,dA(z)\\
	&\qquad+
	\int_{\mathbb{D}}\left|\frac{d^n}{dz^n}(z^nf(z))\overline{h_{r}(z)}-S^{\nu,\mu^{n}_{\nu}}_{f}(h_{r})(z)\right|^{p'}\tau_{[np']}(z)\,dA(z)
	=I_1(h_{r})+I_2(h_{r}),\quad h\in\B.
	\end{align*}
It follows from \eqref{Ie:normofS} that
	\begin{equation}\label{ngngngn}
	I_1(h_{r})=\|S^{\nu,\mu^{n}_{\nu}}_{f}(h_{r})\|_{A^{p'}_{\tau_{[n p']}}}^{p'}
	\lesssim\|h_{f}^{\nu}\|_{A^{p}_{\omega}\to A^{1}_{\nu}}^{p'}\|h_{r}\|_{\mathcal{B}}^{p'},\quad h\in\B.
	\end{equation}
To get the same upper estimate for $I_2(h_{r})$ we argue as follows. Let first $\varepsilon\in(0,1/p')$ to be fixed later. Then \eqref{id:generalizedreproducing}, \eqref{Ieq:newweight1}, the reproducing formula in $A_{\mu_{\nu}^{n}}^{1}$, Fubini's theorem and H\"older's inequality yield
	\begin{equation}\label{estimateofI2}
	\begin{split}
  I_{2}(h_{r})
	&=\int_{\mathbb{D}}\left|\int_{\mathbb{D}}f(\zeta)\overline{(h_{r}(z)-{h_{r}(\zeta)})B_{z}^{\mu_{\nu}^{n}}(\zeta)}\nu(\zeta)\,dA(\zeta)\right|^{p'}
	\tau_{[np']}(z)\,dA(z)\\
&=\int_{\mathbb{D}}\left|\int_{\mathbb{D}}f(\zeta)\overline{\int_{\mathbb{D}}(h_{r}(z)-h_{r}(u))B_{z}^{\mu_{\nu}^{n}}(u)
\overline{B_{\zeta}^{\mu_{\nu}^{n}}(u)}\mu_{\nu}^{n}(u)dA(u)}\nu(\zeta)\,dA(\zeta)\right|^{p'}\tau_{[np']}(z)\,dA(z)\\
&=\int_{\mathbb{D}}\left|\int_{\mathbb{D}}\frac{d^{n}}{du^{n}}(u^{n}f(u))\overline{(h_{r}(z)-h_{r}(u))B_{z}^{\mu_{\nu}^{n}}(u)}
\mu_{\nu}^{n}(u)dA(u)\right|^{p'}\tau_{[np']}(z)\,dA(z)\\
	&\lesssim\int_{\mathbb{D}}\left(\int_{\mathbb{D}}
	\left|\frac{d^{n}}{du^{n}}(u^{n}f(u))(h_{r}(z)-h_{r}(u))B_{z}^{\mu_{\nu}^{n}}(u)\right|\widetilde{\nu}_{[n]}(u)\,dA(u)\right)^{p'}\tau_{[np']}(z)\,dA(z)\\
	&\le\int_{\mathbb{D}}
	\left(\int_{\mathbb{D}}
\bigg|\frac{d^{n}}{d\zeta^{n}}(\zeta^{n}f(\zeta))\bigg|^{p'}|B_{z}^{\mu_{\nu}^{n}}(\zeta)|
\tau_{[\e p'+np']}(\zeta)\,dA(\zeta)\right)\\
	&\quad\cdot\left(\int_{\mathbb{D}}|h_{r}(z)-h_{r}(\zeta)|^{p}|B_{z}^{\mu_{\nu}^{n}}(\zeta)|
	\frac{\widetilde{\nu}(\zeta)^{p}}{\left(\tau_{[\e p']}(\zeta)\right)^{\frac{p}{p'}}}\,dA(\zeta)\right)^{\frac{p'}{p}}\tau_{[np']}(z)\,dA(z),\quad h\in\B.
  \end{split}
	\end{equation}
 We claim that, for all $n$ sufficiently large depending on $\om$, $\nu$ and $p$, we have
	\begin{equation}\label{finalestimate}
  \int_{\mathbb{D}}|h(z)-h(\zeta)|^{p}|B_{z}^{\mu_{\nu}^{n}}(\zeta)|
	\frac{\widetilde{\nu}(\zeta)^{p}}{\left(\tau_{[\e p']}(\zeta)\right)^{\frac{p}{p'}}}\,dA(\zeta)
	\lesssim\|h\|_{\mathcal{B}}^{p}\frac{\widehat{\omega}(z)}{\widehat{\nu}_{[n+\e p]}(z)},\quad h\in\B,\quad z\in\mathbb{D},
	\end{equation}
the proof of which will be provided later. Therefore, by \eqref{estimateofI2}, \eqref{finalestimate}, Fubini's theorem, \cite[Theorem~1]{PR2014}, \eqref{Ieq:newweight} and \eqref{add-1}, we deduce
	\begin{equation*}
	\begin{split}
	I_{2}(h_{r})
	&\lesssim\|h_{r}\|_{\mathcal{B}}^{p'}
	\int_{\mathbb{D}}\left(\int_{\mathbb{D}}\bigg|\frac{d^{n}}{d\zeta^{n}}(\zeta^{n}f(\zeta))\bigg|^{p'}|B_{z}^{\mu_{\nu}^{n}}(\zeta)|\tau_{[\e p'+np']}(\zeta)\,dA(\zeta)\right)\\
	&\quad\cdot\left(\frac{\widehat{\omega}(z)}{\widehat{\nu}_{[n+\e p]}(z)}\right)^{\frac{p'}{p}}\tau_{[np']}(z)\,dA(z)\\
	&=\|h_{r}\|_{\mathcal{B}}^{p'}\int_{\mathbb{D}}\bigg|\frac{d^{n}}{d\zeta^{n}}(\zeta^{n}f(\zeta))\bigg|^{p'}\tau_{[\varepsilon p'+np']}(\zeta)\left(\int_{\mathbb{D}}|B_{z}^{\mu_{\nu}^{n}}(\zeta)|(1-|z|)^{-\varepsilon p'+n}\widetilde{\nu}(z)\,dA(z)\right)dA(\zeta)\\
	&\asymp\|h_{r}\|_{\mathcal{B}}^{p'}\int_{\mathbb{D}}\bigg|\frac{d^{n}}{d\zeta^{n}}(\zeta^{n}f(\zeta))\bigg|^{p'}\tau_{[\varepsilon p'+np']}(\zeta)
	\left(\int_{0}^{|\zeta|}\frac{(1-t)^{-\varepsilon p'+n+1}\widetilde{\nu}(t)}{\widehat{\mu_{\nu}^{n}}(t)(1-t)}\,dt+1\right)\,dA(\zeta)\\
	&\lesssim\|h_{r}\|_{\mathcal{B}}^{p'}\bigg\|\frac{d^{n}}{d(\cdot)^{n}}\big((\cdot)^{n}f(\cdot)\big)\bigg\|_{A^{p'}_{\tau_{[np']}}}^{p'}
\lesssim\|h_{r}\|_{\mathcal{B}}^{p'}\|h_{f}^{\nu}\|_{A^{p}_{\om}\to A^{1}_{\nu}}^{p'},\quad h\in\B.
	\end{split}
	\end{equation*}
This estimate together with \eqref{ngngngn} gives
\[
\int_{\mathbb{D}}\left|\frac{d^n}{dz^n}(z^nf(z))\right|^{p'}|h_{r}(z)|^{p'}\tau_{[np']}(z)\,dA(z)
	\lesssim\|h_{r}\|_{\mathcal{B}}^{p'}\|h_{f}^{\nu}\|_{A^{p}_{\om}\to A^{1}_{\nu}}^{p'},\quad h\in\B,
\]
where the constant of comparison in the formula above is independent on the choice of $r$.
This together with Fatou's lemma and the fact that $\sup\limits_{r\in(0,1)}\|h_{r}\|_{\mathcal{B}}\leq \|h\|_{\mathcal{B}}$ implies that
\begin{align*}
\bigg\|h\frac{d^{n}}{d(\cdot)^{n}}\big((\cdot)^{n}f(\cdot)\big) \bigg\|_{A_{\tau_{[np']}}^{p'}}^{p'}
&\leq\liminf_{r\to 1^{-}}\bigg\|h_{r}\frac{d^{n}}{d(\cdot)^{n}}\big((\cdot)^{n}f(\cdot)\big) \bigg\|_{A_{\tau_{[np']}}^{p'}}^{p'}\\
&\lesssim\liminf_{r\to 1^{-}}\|h_{r}\|_{\mathcal{B}}^{p'}\|h_{f}^{\nu}\|_{A^{p}_{\om}\to A^{1}_{\nu}}^{p'}
	\leq\|h\|_{\mathcal{B}}^{p'}\|h_{f}^{\nu}\|_{A^{p}_{\om}\to A^{1}_{\nu}}^{p'},\quad h\in\B,
\end{align*}
which shows that \eqref{aimoftheorem3} holds. An application of Lemma~\ref{lemma:bloch-measure} together with \eqref{aimoftheorem3} now yields the desired norm estimate from below.

Therefore, to complete the proof, it remains to show that \eqref{finalestimate} is valid. Lemma~\ref{lemma:weights-simple} shows that we may choose $c=c(\om,p)\in(0,1)$ such that $\widetilde{\om}_{[2cp-2p]}\in\DDD$ because $\om\in\DDD$ by the hypothesis. Further, it follows from \cite[Lemma~2.1]{PelSum14} that we may fix a $k=k(\om,p)\in\N$ such that
	\begin{equation}\label{Ie1}
	\int_{\mathbb{D}}\frac{\widetilde{\omega}(\zeta)(1-|\zeta|)^{-2p+2cp}}
	{|1-\overline{\zeta}z|^{2(k-(2-2c)p)}}\,dA(\zeta)
	\asymp\frac{\whw(z)}{(1-|z|)^{2k+2cp-2p-1}},\quad z\in\mathbb{D}.
	\end{equation}
Likewise, by Lemma~\ref{lemma:weights-simple}, we may choose $\varepsilon=\varepsilon(\om,p)\in(0,1)$ such that $\widetilde{\om}_{[(-2\varepsilon p)]}\in\DDD$. Then using Lemma~\ref{weightedkernel}, we find a natural number $N_4=N_4(\om,\nu,p)\ge N_3$ such that for each fixed $n\ge N_4$ we have
	\begin{equation}\label{Ie2}
  \begin{split}
  \int_{\mathbb{D}}\left(|1-\zeta\bar{z}|^{k}|B_{z}^{\mu_{\nu}^{n}}(\zeta)|\right)^{2}\widetilde{\omega}(\zeta)
	(1-|\zeta|)^{-2\varepsilon p}\,dA(\zeta)
	&\lesssim\int_{0}^{|z|}\frac{\widehat{\widetilde{\omega}}(t)(1-t)^{-2\varepsilon p}}
{\widehat{\mu_{\nu}^{n}}(t)^{2}(1-t)^{2-2k}}\,dt+1\\
&\asymp\frac{\widehat{\omega}(z)}{\widehat{\nu}(z)^{2}(1-|z|)^{1-2k+2n+2\varepsilon p}},\quad z\in\D.
  \end{split}
	\end{equation}
In addition, a direct application of \cite[Proposition~2.4]{WL} or \cite[Theorem 5.8]{zhu} gives
	\begin{equation}\label{Ie3}
  \frac{(1-|z|)^{1-c}(1-|\zeta|)^{1-c}|h(z)-h(\zeta)|}{|z-\zeta||1-\overline{\zeta}z|^{1-2c}}\lesssim\|h\|_{\mathcal{B}},\quad h\in\B,
	\end{equation}
for all $z,\zeta\in\D$ with $z\ne\zeta$. Thus, by \eqref{Ie1}, \eqref{Ie2}, \eqref{Ie3}, and the Cauchy-Schwarz inequality, we finally deduce
	\begin{align*}
    &\quad \int_{\mathbb{D}}|h(z)-h(\zeta)|^{p}|B_{z}^{\mu_{\nu}^{n}}(\zeta)|(1-|\zeta|)^{-\varepsilon p}
	\frac{\widetilde{\nu}(\zeta)^{p}}{\tau(\zeta)^{\frac{p}{p'}}}\,dA(\zeta)\\
	&=\int_{\mathbb{D}}|h(z)-h(\zeta)|^{p}|B_{z}^{\mu_{\nu}^{n}}(\zeta)|(1-|\zeta|)^{-\varepsilon p}\widetilde{\omega}(\zeta)\,dA(\zeta)\\
	&\lesssim\|h\|_{\mathcal{B}}^{p}\int_{\mathbb{D}}\frac{|z-\zeta|^{p}|1-\zeta\overline{z}|^{(1-2c)p}}{(1-|z|)^{(1-c)p}(1-|\zeta|)^{(1-c)p}}
	|B_{z}^{\mu_{\nu}^{n}}(\zeta)|(1-|\zeta|)^{-\varepsilon p}\widetilde{\omega}(\zeta)\,dA(\zeta)\\
	&\le\|h\|_{\mathcal{B}}^{p}(1-|z|)^{(c-1)p}\int_{\mathbb{D}}|1-\zeta\overline{z}|^{k}|1-\zeta\overline{z}|^{(2-2c)p-k}|B_{z}^{\mu_{\nu}^{n}}(\zeta)|
	(1-|\zeta|)^{-\varepsilon p-p+cp}\widetilde{\omega}(\zeta)\,dA(\zeta)\\
	&\le\|h\|_{\mathcal{B}}^{p}(1-|z|)^{(c-1)p}\left(\int_{\mathbb{D}}\left(|1-\zeta\overline{z}|^{k}|B_{z}^{\mu_{\nu}^{n}}(\zeta)|\right)^{2}
	\frac{\widetilde{\omega}(\zeta)}
	{(1-|\zeta|)^{2\varepsilon p}}\,dA(\zeta)\right)^{\frac{1}{2}}\\
	&\quad\cdot\left(\int_{\mathbb{D}}\frac{\widetilde{\omega}(\zeta)(1-|\zeta|)^{-2p+2cp}}
	{|1-\zeta\overline{z}|^{2(k-(2-2c)p)}}dA(\zeta)\right)^{\frac{1}{2}}\\
	&\lesssim\|h\|_{\mathcal{B}}^{p}(1-|z|)^{(c-1)p}
	\left(\frac{\widehat{\omega}(z)}{\widehat{\nu}(z)^{2}(1-|z|)^{1-2k+2n+2\varepsilon p}}\right)^{\frac12}
	\left(\frac{\whw(z)}{(1-|z|)^{2k+2cp-2p-1}}\right)^{\frac12}\\
	&=\|h\|_{\mathcal{B}}^{p}(1-|z|)^{-n-\varepsilon p}\frac{\widehat{\omega}(z)}{\widehat{\nu}(z)},\quad h\in\B,\quad z\in\D.
	\end{align*}
This completes the proof of \eqref{finalestimate}. We finish the proof of the theorem by noting that the number $N=N(\om,\nu,p)\in\N$ appearing in the statement equals to $\max\{N_2,N_4\}$.\hfill$\Box$

\section{Proof of Theorem~\ref{theorem3}}

\noindent\emph{Proof of sufficiency}. Assume that \eqref{Eq:Thm3-hypothesis} is satisfied. Since $f\in A^1_{\nu_{\log}}$ and $\nu\in\DDD$ by the hypotheses, \eqref{Id: newrepresentation}, Fubini's theorem and \cite[Theorem~1]{PR2014} yield
	\begin{equation}\label{aaslkjfh}
	\begin{split}
	\|h^\nu_f(g)\|_{A^1_\nu}
	&=\int_\D\left|\int_{\D}\overline{g(\z)B_z^{\nu}(\z)}\left(\frac{d^n}{d\z^n}(\z^nf(\z))\right)\mu^n_{\nu}(\z)\,dA(\z)\right|\nu(z)\,dA(z)\\
	&\leq \int_{\D}|g(\z)|\left|\frac{d^n}{d\z^n}(\z^nf(\z))\right|\mu^n_\nu(\z)\left(\int_\D|B^\nu_{\z}(z)|\nu(z)\,dA(z)\right)dA(\z)\\
	&\lesssim\int_{\D}|g(\z)|\left|\frac{d^n}{d\z^n}(\z^nf(\z))\right|\mu^n_\nu(\z)\log\frac{e}{1-|\z|}\,dA(\z)\\
	&=\int_{\D}|g(\z)|\,d\eta^n_{f,\nu}(\z), \quad g\in H^\infty,
	\end{split}
	\end{equation}
where
	$$
	d\eta^n_{f,\nu}(\z)=\left|\frac{d^n}{d\z^n}(\z^nf(\z))\right|\mu^n_\nu(\z)\log\frac{e}{1-|\z|}\,dA(\z).
	$$
Therefore a sufficient condition for $h^\nu_f:A^p_\om\to A^1_\nu$ to be bounded is that $\eta^n_{f,\nu}$ is a $1$-Carleson measure for $A^p_\om$. By \cite[Theorem~2]{LiuRattya} this is equivalent to \eqref{tvtvt}, where $\mu$ is replaced by $\eta^n_{f,\nu}$ and $q=1$. To obtain this, we first observe that since $\om\in\DDD$ by the hypothesis, there exists an $r_0=r_0(\om)\in(0,1)$ such that $\om(\Delta(z,r))\asymp\widehat{\om}(z)(1-|z|)$ as $|z|\to1^-$, provided $r\in(r_0,1)$. This asymptotic equality together with the assumption \eqref{Eq:Thm3-hypothesis}, \eqref{Ieq:newweight1} and \cite[Lemma~2.1]{PelSum14} yield
	\begin{align*}
	\frac{\eta^n_{f,\nu}(\Delta(z,r))}{\om(\Delta(z,r))^{\frac1p}}
	&\asymp\frac{\int_{\Delta(z,r)}\left|\frac{d^n}{d\z^n}(\z^nf(\z))\right|\mu_\nu^n(\z)\log\frac{e}{1-|\z|}\,dA(\z)}
	{\widehat{\om}(z)^{\frac1p}(1-|z|)^{\frac1p}}\\
	&\lesssim\frac{\int_{\Delta(z,r)}\widehat{\om}(\z)^{\frac1p}(1-|\z|)^{\frac{1}{p}-2}\,dA(\z)}{\widehat{\om}(z)^{\frac1p}(1-|z|)^{\frac1p}}\asymp1,\quad |z|\to1^-,
	\end{align*}
and it follows from the density argument that $h^\nu_f:A^p_\om\to A^1_\nu$ is bounded, and
	$$
	\|h^\nu_{f}\|_{A^p_\om\rightarrow A^1_\nu}
	\lesssim\sup_{z\in\D}\left|\frac{d^n}{dz^n}(z^nf(z))\right|(1-|z|)^n
	\frac{\widehat{\nu}(z)(1-|z|)}{\left(\whw(z)(1-|z|)\right)^{\frac{1}{p}}}\log\frac{e}{1-|z|}.
	$$
An application of Lemma~\ref{Lemma:derivative2} completes the proof, provided $n\in\N$ is sufficiently large, say $n\ge N_1=N_1(\om,\nu,p,q)\in\N$. \hfill$\Box$

\medskip

\medskip

\noindent\emph{Proof of necessity}. Assume that $h^\nu_f:A^p_\om\to A^1_\nu$ is bounded. Since $\nu\in\DDD$ by the hypothesis, $(A^1_{\nu})^\star\simeq\B$ via the $A^2_\nu$-pairing by \cite[Theorem~3]{PelRat2020}. By \eqref{Id: newrepresentation} and Fubini's theorem, we have
	\begin{align*}
  \left<h^\nu_f(g),h\right>_{A^2_\nu}
	&=\int_\D\left(\int_\D\left(\frac{d^n}{d\z^n}(\z^nf(\z))\right)\overline{g(\z)}\overline{B^\nu_z(\z)}\mu^n_\nu(\z)\,dA(\z)\right)
	\overline{h(z)}\nu(z)\,dA(z)\\
  &=\int_\D\left(\frac{d^n}{d\z^n}(\z^nf(\z))\right)\overline{g(\z)}\mu^n_\nu(\z)
	\left(\overline{\int_\D h(z)\overline{B^\nu_\z(z)}\nu(z)\,dA(z)}\right)\,dA(\z)\\
  &=\int_\D\left(\frac{d^n}{d\z^n}(\z^nf(\z))\right)\overline{g(\z)}\overline{h(\z)}\mu^n_\nu(\z)\,dA(\z),\quad g,h\in H^\infty,
	\end{align*}
and therefore
	\begin{equation}\label{dualq=1}
	\begin{split}
  \left|\int_\D\left(\frac{d^n}{d\z^n}(\z^nf(\z))\right)\overline{g(\z)}\overline{h(\z)}\mu^n_\nu(\z)\,dA(\z)\right|
	\le\|h^\nu_f(g)\|_{A^1_\nu}\|h\|_\B\le\|h^\nu_f\|_{A^p_\om\rightarrow A^1_\nu}\|g\|_{A^p_\om}\|h\|_\B
	\end{split}
	\end{equation}
for all $g,h\in H^\infty$. Choose now $g(\z)=g_{z,n,\nu}(\z)=B_z^{\mu_\nu^n}(\z)$ and $h(\zeta)=h_z(\z)=1-\log(1-\overline{z}\z)$ for all $z,\z\in\D$. Then, obviously $\|h_z\|_\B\lesssim1$ for all $z\in\D$. Further, by \cite[Theorem~1]{PR2014}, \eqref{Ieq:newweight} and \cite[Lemma~2.1]{PelSum14} there exists an $N_2=N_2(\om,\nu,p)$ such that
	\begin{equation}\label{thmlkj}
  \|g_{z,n,\nu}\|_{A^p_\om}^p
	\asymp\int_0^{|z|}\frac{\whw(t)}{\widehat{\nu}(t)^p(1-t)^{np+p}}\,dt+1
	\asymp\frac{\whw(z)}{\widehat{\nu}(z)^p(1-|z|)^{np+p-1}},\quad z\in\D,
	\end{equation}
for each $n\ge N_2$. Therefore \eqref{dualq=1} yields
	\begin{equation*}
	\begin{split}
  \left|\int_\D\left(\frac{d^n}{d\z^n}(\z^nf(\z))\right)\overline{B_z^{\mu_\nu^n}(\z)}\log\frac{e}{1-\overline{\z}z}\mu^n_\nu(\z)\,dA(\z)\right|
	\lesssim\|h^\nu_f\|_{A^p_\om\rightarrow A^1_\nu}\frac{\left(\whw(z)(1-|z|)\right)^\frac1p}{\widehat{\nu}(z)(1-|z|)^{n+1}},\quad z\in\D.
	\end{split}
	\end{equation*}
In view of the identity
	$$
	\log\frac{e}{1-|z|}=\log\frac{1-\overline{\zeta}z}{1-|z|}+\log\frac{e}{1-\overline{\z}z}
	$$
and Lemma~\ref{Lemma:derivative2}, it remains to prove the same upper bound for the term
	$$
	I(z)=\left|\int_\D\left(\frac{d^n}{d\z^n}(\z^nf(\z))\right)\overline{B_z^{\mu_\nu^n}(\z)}\log\frac{1-\overline{\zeta}z}{1-|z|}\mu^n_\nu(\z)\,dA(\z)\right|.
	$$
To do this, observe that
	\begin{equation*}
	\begin{split}
  \left|\left\langle \frac{d^n}{d(\cdot)^n}((\cdot)^nf(\cdot)),g\right\rangle_{A^2_{\mu^n_\nu}}\right|
	&\asymp|h^\nu_f(g)(0)|
	\lesssim\|h^\nu_f(g)\|_{A^1_\nu}
	\le\|h^\nu_f\|_{A^p_\om\rightarrow A^1_\nu}\|g\|_{A^p_\om},\quad g\in H^\infty,
	\end{split}
	\end{equation*}
by \eqref{Id: newrepresentation}. An application of this inequality to the function
	$$
	g(\zeta)=g_{z,n,\nu}(\z)=B_z^{\mu_\nu^n}(\z)\log\frac{1-\overline{z}\z}{1-|z|},\quad \z\in\D,
	$$
gives
	\begin{equation*}
	\begin{split}
  I(z)\lesssim\|h^\nu_f\|_{A^p_\om\rightarrow A^1_\nu}
	\left(\int_{\D}\left|B_z^{\mu_\nu^n}(\z)\log\frac{1-\overline{z}\z}{1-|z|}\right|^p\om(\zeta)\,dA(\zeta)\right)^\frac1p.
	\end{split}
	\end{equation*}
The inequality $\log x\le x-1$, applied to $x=\left(\frac{|1-\overline{z}\z|}{1-|z|}\right)^\e$, implies
	\begin{equation*}
	\begin{split}
  I(z)^p&\lesssim\|h^\nu_f\|^p_{A^p_\om\rightarrow A^1_\nu}\left(
	\frac{1}{(1-|z|)^{\e p}}\int_{\D}\left|(1-\overline{z}\z)^\e B_z^{\mu_\nu^n}(\z)\right|^p\om(\zeta)\,dA(\zeta)
	+\|B_z^{\mu_\nu^n}\|_{A^p_\om}^p\right)\\
	&\lesssim\frac{\|h^\nu_f\|^p_{A^p_\om\rightarrow A^1_\nu}}{(1-|z|)^{\e p}}\int_{\D}\left|(1-\overline{z}\z)^\e B_z^{\mu_\nu^n}(\z)\right|^p\om(\zeta)\,dA(\zeta)
	\end{split}
	\end{equation*}
for each fixed $0<\e<1$. By Lemma~\ref{weightedkernel}, H\"older's inequality, \cite[Theorem~1]{PR2014} and \cite[Lemma~2.1]{PelSum14}, and by choosing $N_3=N_3(\om,\nu,p)\in\N$ sufficiently large we deduce
	\begin{equation*}
	\begin{split}
  \int_{\D}\left|(1-\overline{z}\z)^\e B_z^{\mu_\nu^n}(\z)\right|^p\om(\zeta)\,dA(\zeta)
	&=\int_{\D}\left|(1-\overline{z}\z)B_z^{\mu_\nu^n}(\z)\right|^{\e p}\left|B_z^{\mu_\nu^n}(\z)\right|^{p(1-\e)}\om(\zeta)\,dA(\zeta)\\
	&\le\left(\int_{\D}\left|(1-\overline{z}\z)B_z^{\mu_\nu^n}(\z)\right|^2\om(\zeta)\,dA(\zeta)\right)^\frac{\e p}{2}\\
	&\quad\cdot\left(\int_{\D}\left|B_z^{\mu_\nu^n}(\z)\right|^{\frac{2p(1-\e)}{2-\e p}}\om(\zeta)\,dA(\zeta)\right)^{\frac{2-\e p}{2}}\\
	&\lesssim\left(\int_0^{|z|}\frac{\widehat{\om}(t)}{(\widehat{\nu}(t)(1-t)^n)^2}\,dt+1\right)^\frac{\e p}{2}\\
	&\quad\cdot\left(\int_0^{|z|}\frac{\widehat{\om}(t)}
	{(\widehat{\nu}(t)(1-t)^n)^{\frac{2p(1-\e)}{2-\e p}}(1-t)^{\frac{2p(1-\e)}{2-\e p}}}\,dt+1\right)^{\frac{2-\e p}{2}}\\
	&\asymp\left(\frac{\widehat{\om}(z)}{\widehat{\nu}(z)^2(1-|z|)^{2n-1}}\right)^\frac{\e p}{2}\\
	&\quad\cdot\left(\frac{\widehat{\om}(z)}
	{\widehat{\nu}(z)^{\frac{2p(1-\e)}{2-\e p}}(1-|z|)^{(n+1)\frac{2p(1-\e)}{2-\e p}-1}}\right)^{\frac{2-\e p}{2}}\\
	&=\frac{\widehat{\om}(z)(1-|z|)}{\widehat{\nu}(z)^{p}(1-|z|)^{p(n+1)-\e p}}
	\end{split}
	\end{equation*}
for each fixed $n\ge N_3$, and hence
	\begin{equation*}
	\begin{split}
  I(z)&\lesssim\|h^\nu_f\|_{A^p_\om\rightarrow A^1_\nu}\frac{(\widehat{\om}(z)(1-|z|))^\frac1p}{\widehat{\nu}(z)(1-|z|)^{n+1}}.
	\end{split}
	\end{equation*}
We finish the proof of the theorem by noting that the number $N=N(\om,\nu,p,q)\in\N$ appearing in the statement equals to $\max\{N_1,N_2,N_3\}$. \hfill$\Box$

\section{Proofs of Theorems~\ref{theorem4} and~\ref{theorem5}}

\medskip

\noindent\emph{Proof of Theorem~\ref{theorem4}}.
Assume first \eqref{result4}. Let $b$ be a positive function to be fixed later. By \cite[Proposition~5]{PRS1}, H\"older's inequality and Fubini's theorem we deduce
		\begin{align*}
		\|h^\nu_f(g)\|^q_{A^q_\nu}
		&\asymp\|h^\nu_f(g)\|^q_{A^q_{\widetilde{\nu}}}
		\le\int_\D\left(\int_{\D}|{g(\z)B_z^{\nu}(\z)}|\left|\frac{d^n}{d\z^n}(\z^nf(\z))\right|\mu^n_{\nu}(\z)\,dA(\z)\right)^qb(z)^q\frac{\widetilde{\nu}(z)}{b(z)^q}\,dA(z)\\
		&\leq \left(\int_\D\int_{\D}|{g(\z)B_z^{\nu}(\z)}|\left|\frac{d^n}{d\z^n}(\z^nf(\z))\right|\mu^n_{\nu}(\z)\,dA(\z)b(z)\widetilde{\nu}(z)\,dA(z)\right)^q\\
		&\quad\cdot\left(\int_\D\frac{\widetilde{\nu}(z)}{b(z)^{\frac{q}{1-q}}}\,dA(z)\right)^{1-q}\\
		&=\left(\int_\D|g(\z)|\left|\frac{d^n}{d\z^n}(\z^nf(\z))\right|\mu^n_\nu(\z)\int_\D|B^\nu_\z(z)|b(z)\widetilde{\nu}(z)\,d A(z)\,dA(\z)\right)^q\\
		&\quad\cdot\left(\int_\D\frac{\widetilde{\nu}(z)}{b(z)^{\frac{q}{1-q}}}\,dA(z)\right)^{1-q},\quad g\in H^\infty,
		\end{align*}
for all $n\in\N\cup\{0\}$. Now, by \cite[Lemma~2.1]{PelSum14} and \eqref{Eq:Dd-characterization}, we may choose $\varepsilon=\varepsilon(\nu,q)>0$ sufficiently small such that $z\mapsto\frac{\widehat{\nu}(z)^{\frac1q}}{(1-|z|)^{1+\varepsilon}}$ belongs to $\DDD$ and $z\mapsto\frac{\widehat{\nu}(z)^{\frac1q-1}}{(1-|z|)^{1+\varepsilon}}$ is integrable over $\D$ with respect to the Lebesgue area measure. Take now $b(z)=\widehat{\nu}(z)^{\frac1q-1}(1-|z|)^{-\varepsilon}$. Then
		\begin{equation*}\label{eq: secondfactor}
		\int_\D\frac{\widetilde{\nu}(z)}{b(z)^{\frac{q}{1-q}}}\,dA(z)=\int_\D(1-|z|)^{\frac{q}{1-q}\varepsilon-1}\,dA(z)<\infty.
		\end{equation*}
This together with \eqref{Ieq:newweight1} and \cite[Theorem~1]{PR2014} gives
		\begin{equation*}
    \begin{split}
		\|h^\nu_f(g)\|_{A^q_\nu}
		&\lesssim\int_\D|g(\z)|\left|\frac{d^n}{d\z^n}(\z^nf(\z))\right|\widehat{\nu}(\z)(1-|\z|)^{n-1}
		\left(\int_\D|B^\nu_\z(z)|\frac{\widehat{\nu}(z)^{\frac1q}}{(1-|z|)^{1+\varepsilon}}\,dA(z)\right)\,dA(\z)\\
		&\lesssim\int_\D|g(\z)|\left|\frac{d^n}{d\z^n}(\z^nf(\z))\right|\widehat{\nu}(\z)(1-|\z|)^{n-1}\left(\int_0^{|\z|}\frac{\widehat{\nu}(t)^{\frac1q-1}}{(1-t)^{1+\varepsilon}}\,dt+1\right)\,dA(\z)\\
		&\lesssim\int_\D|g(\z)|\left|\frac{d^n}{d\z^n}(\z^nf(\z))\right|\widehat{\nu}(\z)(1-|\z|)^{n-1}\,dA(\z),\quad g\in H^\infty.
    \end{split}
		\end{equation*}
H\"older's inequality and \cite[Proposition~5]{PRS1} now yield
\begin{align*}
  \|h^\nu_f(g)\|_{A^q_\nu}&\lesssim\left(\int_\D|g(\z)|^p\widetilde{\om}(\z)\,dA(\z)\right)^{\frac1p}\cdot \left(\int_\D\left|\frac{d^n}{d\z^n}(\z^nf(\z))\right|^{p'}\widehat{\nu}(\z)^{p'}(1-|\z|)^{(n-1)p'}\widetilde{\om}(\z)^{-\frac{p'}{p}}\,dA(\z)\right)^{\frac{1}{p'}}\\
  &\asymp\|g\|_{A^p_\om}\left(\int_\D\left|\frac{d^n}{d\z^n}(\z^nf(\z))\right|^{p'}(1-|\z|)^{np'}
		\left(\frac{\widehat{\nu}(\z)}{\whw(\z)}\right)^{p'}\frac{\widehat{\om}(\z)}{1-|\z|}\, dA(\z)\right)^{\frac{1}{p'}}.
\end{align*}
This together with Lemma~\ref{Lemma:derivative}, the hypothesis \eqref{result4} and the density argument shows that $h^\nu_f:A^p_{\om}\rightarrow A^q_{\nu}$ is bounded with the desired upper estimate for the operator norm, provided $n$ is sufficiently large, say $n\ge N_1=N_1(\om,\nu,p,q)\in\N$.
	
Conversely, assume that $h^\nu_f:A^p_{\om}\rightarrow A^q_{\nu}$ is bounded. Then \eqref{Id: newrepresentation}, the subharmonicity of $|h^\nu_f(g)|$ and \cite[Proposition~5]{PRS1} give
	\begin{equation}\label{gsgsgsg}
	\begin{split}
  \left|\int_\D\left(\frac{d^n}{d\z^n}(\z^nf(\z))\right)\overline{g(\z)}\mu^n_{\nu}(\z)\,dA(\z)\right|
	&\asymp|h^\nu_f(g)(0)|
	\lesssim\|h^\nu_f(g)\|_{A^q_\nu}\\
	&\le\|h^\nu_f\|_{A^p_\om\rightarrow A^q_\nu}\|g\|_{A^p_\om}
	\asymp\|h^\nu_f\|_{A^p_\om\rightarrow A^q_\nu}\|g\|_{A^p_{\widetilde{\om}}}.
	\end{split}
	\end{equation}
Now \cite[Theorem~13]{PelRat2020} implies that $P_{\mu^n_\nu}:L^p_{\widetilde{\om}}\to A^p_{\widetilde{\om}}$ is bounded, provided $n$ is sufficiently large, say $n\ge N_2=N_2(\om,\nu,p)\in\N$. Hence $(A^p_{\widetilde{\om}})^\star\simeq A^{p'}_{W_{p'}}$, where $W_{p'}=(\frac{\mu^n_\nu}{\widetilde{\om}})^{p'}\widetilde{\om}$, via the $A^2_{\mu^n_\nu}$-pairing by \cite[Theorem~6]{PR2014}. Therefore
	\begin{align*}
  \|h^\nu_f\|^{p'}_{A^p_\om\rightarrow A^q_\nu}
	&\gtrsim\int_{\D}\left|\frac{d^n}{d\zeta^n}(\zeta^nf(\zeta))\right|^{p'}W_{p'}(\z)\,dA(\z)\\
  &\asymp\int_\D\left|\frac{d^n}{d\zeta^n}(\zeta^nf(\zeta))\right|^{p'}(1-|\z|)^{np'}
		\left(\frac{\widehat{\nu}(\z)}{\whw(\z)}\right)^{p'}\frac{\widehat{\om}(\z)}{1-|\z|}\,dA(\z).
	\end{align*}
An application of Lemma~\ref{Lemma:derivative} finishes the proof of the necessity for $n\in\N$ is sufficiently large, say $n\ge N_3=N_3(\om,\nu,p)\ge N_2$. The number $N=N(\om,\nu,p)\in\N$ appearing in the statement of the theorem equals to $\max\{N_1,N_3\}$. \hfill$\Box$

\medskip

\noindent\emph{Proof of Theorem \ref{theorem5}}. Assume first \eqref{result5}. By the proof of Theorem~\ref{theorem4}, Lemma~\ref{Lemma:derivative2} and \cite[Theorem~1]{PR2015} we deduce
		\begin{equation*}
    \begin{split}
		\|h^\nu_f(g)\|_{A^q_\nu}
		&\lesssim\int_\D|g(\z)|\left|\frac{d^n}{d\z^n}(\z^nf(\z))\right|\widehat{\nu}(\z)(1-|\z|)^{n-1}\,dA(\z)\\
		&\lesssim\int_\D|g(\z)|\frac{(\whw(\z)(1-|\z|))^{\frac1p}}{(1-|\z|)^2}\,dA(\z)\lesssim\|g\|_{A^p_{\om}},\quad g\in H^\infty,
    \end{split}
		\end{equation*}
for each $n\in\N$ is sufficiently large, say $n\ge N_1=N_1(\om,\nu,p)\in\N$. This reasoning together with the density argument shows that $h^\nu_f: A^p_{\om}\rightarrow A^q_{\nu}$ is bounded with the desired upper estimate for the operator norm.

Conversely, assume that $h^\nu_f: A^p_{\om}\rightarrow A^q_{\nu}$ is bounded. By choosing $g(\zeta)=g_{z,n,\nu}(\z)=B^{\mu^n_\nu}_z(\z)$ in \eqref{gsgsgsg}, \eqref{thmlkj} yields
	$$
	\left|\frac{d^n}{dz^n}(z^nf(z))\right|
	\lesssim\|h^\nu_f\|_{A^p_\om\rightarrow A^q_\nu}\|g_{z,n,\nu}\|_{A^p_\om}
	\asymp\|h^\nu_f\|_{A^p_\om\rightarrow A^q_\nu}\frac{(\whw(z)(1-|z|))^{\frac1p}}{\widehat{\nu}(z)(1-|z|)^{n+1}},\quad z\in\D,
	$$
provided $n\in\N$ is sufficiently large, say $n\ge N_2=N_2(\om,\nu,p)\in\N$. An application of Lemma~\ref{Lemma:derivative2} finishes the proof of the theorem. The number $N=N(\om,\nu,p)\in\N$ appearing in the statement of the theorem equals to $\max\{N_1,N_2\}$. \hfill$\Box$

\end{document}